\newcommand{\ra}[1]{\renewcommand{\arraystretch}{#1}}
\theoremstyle{plain}
\newtheorem{theorem}{Theorem}[section]
\newtheorem{lemma}[theorem]{Lemma}
\newtheorem*{lemma*}{Lemma}
\newtheorem{proposition}[theorem]{Proposition}
\newtheorem{corollary}[theorem]{Corollary}
\newtheorem{definition}[theorem]{Definition}
\theoremstyle{definition}
\newtheorem{remark}{Remark}
\newcommand{\R}{\mathbb{R}}
\DeclareMathOperator{\trace}{\textsc{Tr}}
\newcommand{\ip}[2]{\left\langle #1, #2 \right\rangle}
\newcommand{\norm}[1]{\left \Vert #1\right \Vert}
\newcommand{\dist}{{\rm{\textsc{Dist}}}}
\newcommand{\rank}{\mathrm{rank}}
\newcommand{\cmark}{\ding{51}}%
\newcommand{\obs}{y}
\newcommand{\linmap}{\mathcal{A}}
\newcommand{\plus}[1]{{#1}^{\scalebox{0.7}{+}}}
\newcommand{\tagterm}[2]{\underbrace{#2}_{#1}}
\newcommand{\algo}{\texttt{BFGD}\xspace}
\newcommand{\fgd}{\texttt{FGD}\xspace}
\newcommand{\C}{\mathcal{C}}
\newcommand{\X}{X}
\newcommand{\Xp}{\plus{X}}
\newcommand{\Xo}{X^{\star}}
\newcommand{\U}{U}
\newcommand{\W}{W}
\newcommand{\f}{f}
\newcommand{\gradf}{\nabla \f}
\newcommand{\Uo}{U^{\star}}
\newcommand{\Vo}{V^{\star}}
\newcommand{\weta}{\widehat{\eta}}
\def\Up{\plus{U}}
\def\Vp{\plus{V}}
\def\U{U}
\def\V{V}
\def\Y{Y}
\def\obs{y}
\def\linmap{\mathcal{A}}
\newcommand{\Ut}{\tilde{U}}
\newcommand{\Vt}{\tilde{V}}
\newcommand{\Uinit}{\U_0}
\newcommand{\Vinit}{\V_0}
\newcommand{\Xinit}{\X_0}
\newcommand{\Xh}{\hat{\X}}
\newcommand{\Us}{\U^\star}
\newcommand{\Vs}{\V^\star}
\newcommand{\Xs}{\X^\star}
\newcommand{\innerp}[2]{\left \langle #1, #2 \right\rangle}
\newcommand{\factor}{\U \V^\top}
\newcommand{\factoropt}{\Uo {\Vo}^\top}
\newcommand{\factorinit}{\U_0 \V_0^\top}
\newcommand{\normt}[1]{\left\|{#1}\right\|_2}
\newcommand{\normf}[1]{\left\|{#1}\right\|_F}
\newcommand{\gradU}{\nabla_U f(\factor)}
\newcommand{\gradV}{\nabla_V f(\factor)}
\newcommand{\gradW}{\nabla_W f(\factor)}
\newcommand{\proj}[1]{\mathcal{P}_{#1}}
\newcommand{\set}[1]{\mathcal{#1}}
\newcommand{\Xopt}{X^{\star}}
\newcommand{\Xoptr}{X^{\star}_r}
\def\Wo{\W^\star}
\def\Wp{\plus{\W}}
\newcommand{\Yo}{Y^\star}
\newcommand{\Wfac}{\begin{bmatrix} \U \\ \V \end{bmatrix}}
\newcommand{\Wofac}{\begin{bmatrix} \Uo \\ \Vo \end{bmatrix}}
\def\mumin{\mu_{\min}}
\def\fh{\hat{f}}
\newcommand{\xmark}{\ding{55}}%
\begin{document}

\title{Finding Low-Rank Solutions via Non-Convex Matrix Factorization, Efficiently and Provably}
\author{
 Dohyung Park, Anastasios Kyrillidis, Constantine Caramanis, and Sujay Sanghavi \\
 \vspace{0.2cm}
 The University of Texas at Austin \\
 \vspace{0.1cm}
 \{dhpark, anastasios, constantine\}@utexas.edu,
 sanghavi@mail.utexas.edu
}
\maketitle

\begin{abstract}%   <- trailing '%' for backward compatibility of .sty file
A rank-$r$ matrix $\X \in \R^{m \times n}$ can be written as a product $\U \V^\top$, where $\U \in \R^{m \times r}$ and $\V \in \R^{n \times r}$. 
One could exploit this observation in optimization: \textit{e.g.}, consider the minimization of a convex function $f(\X)$ over rank-$r$ matrices, where the set of rank-$r$ matrices is modeled via the factorization $\U\V^\top$. 
%Such a heuristic has been widely used for specific problem instances, where the solution sought is (approximately) low-rank. 
Though such parameterization reduces the number of variables, and is more computationally efficient (of particular interest is the case $r \ll \min\{m, n\}$), it comes at a cost: $f(\U \V^\top)$ becomes a non-convex function w.r.t. $\U$ and $\V$. 

We study such parameterization for optimization of generic convex objectives $f$, and focus on first-order, gradient descent algorithmic solutions. We propose the \emph{Bi-Factored Gradient Descent} (\algo) algorithm, an efficient first-order method that operates on the $U, V$ factors. We show that when $f$ is (restricted) smooth, \algo has local sublinear convergence, and linear convergence when $f$ is both (restricted) smooth and (restricted) strongly convex. For several key applications, we provide simple and efficient initialization schemes that provide approximate solutions good enough for the above convergence results to hold.
\end{abstract}

\section{Introduction}{\label{sec:intro}}
We study matrix problems of the form: %that can be described by the following optimization form:
\begin{equation}{\label{intro:eq_00}}
\begin{aligned}
	& \underset{X \in \R^{m \times n}}{\text{minimize}}
	& & f(X),
	%\\
	%& \text{subject to}
	%& & \rank(X) \leq r.
\end{aligned}
\end{equation} 
where the minimizer $\Xopt \in \R^{m \times n}$ is rank-$r^\star$ ($r^\star \leq \min\left\{m,n\right\}$), or \emph{approximately} low rank, \emph{i.e.}, 
$\|\Xopt - \Xopt_{r^\star}\|_F$ is sufficiently small, for $\Xopt_{r^\star}$ being the best rank-$r^\star$ approximation of $\Xopt$.
In our discussions, $f$ is a differentiable convex function; further assumptions on $f$ will be described later in the text. 
Note, in particular, that in the absence of further assumptions, $\Xopt$ may not be unique.

Specific instances of \eqref{intro:eq_00}, where the solution is assumed low-rank, appear in several applications in diverse research fields; 
a non-exhaustive list includes factorization-based recommender systems
\citep{srebro2004maximum, rennie2005fast, decoste2006collaborative, bennett2007netflix, koren2009matrix, jaggi2010simple},
multi-label classification tasks \citep{agrawal2013multi, bhatia2015sparse, carneiro2007supervised, makadia2008new, wang2009multi, weston2011wsabie},
dimensionality reduction techniques \citep{schein2003logisticPCA, chiang2014prediction, johnson2014logistic, verstrepen2015collaborative, gupta2015collectively, liu2016neighborhood},
density matrix estimation of quantum systems
\citep{aaronson2007learnability, gross2010quantum, kalev2015quantum}, 
phase retrieval applications \citep{candes2015phase, waldspurger2015phase}, 
sensor localization \citep{biswas2006semidefinite, weinberger2007graph} and
protein clustering \citep{lu2005framework} tasks,
image processing problems \citep{andrews1976singular}, as well as applications in system theory \citep{fazel2004rank},
just to name a few.
Thus, it is critical to devise user-friendly, efficient and provable algorithms that solve \eqref{intro:eq_00}, taking into consideration such near low-rank structure of $\Xopt$.

While, in general, imposing a low-rankness may result in an NP-hard problem, \eqref{intro:eq_00} with a rank-constraint can be solved in polynomial-time for numerous applications, where $f$ has specific structure.
A prime---and by now well-known---example of this is the matrix sensing/matrix completion problem \citep{candes2009exact, recht2010guaranteed, jaggi2010simple} (we discuss this further in Section \ref{sec:examples}). 
There, $f$ is a least-squares objective function and the measurements satisfy the appropriate restricted isometry/incoherence assumptions. 
In such a scenario, the optimal low-rank $\Xopt$ can be recovered in polynomial time, by solving \eqref{intro:eq_00} with a rank-constraint \citep{jain2010guaranteed, becker2013randomized, balzano2010online, lee2010admira, kyrillidis2014matrix, tanner2013normalized}, or by solving its convex nuclear-norm relaxation, as in \citep{lin2010augmented, becker2011templates, cai2010singular, becker2011nesta, chen2014coherent, yurtsever2015universal}. 

In view of the above and although such algorithms have attractive convergence rates, they directly manipulate the $n \times n$ variable matrix $X$, which in itself is computationally expensive in the high-dimensional regime. 
Specifically, each iteration in these schemes typically requires computing the top-$r$ singular value/vectors of the matrix. As $n$ scales, these computational demands at each iteration can be prohibitive.

\medskip
\noindent \textit{Optimizing over factors.}
In this paper, we follow a different path: a rank-$r$ matrix $\X \in \R^{m \times n}$ can be written as a product of two matrices $\U \V^\top$, where $\U \in \R^{m \times r}$ and $\V \in \R^{n \times r}$. 
Based on this premise, we consider optimizing $f$ over the $\U$ and $\V$ space. 
Particularly, we are interested in solving (\ref{intro:eq_00}) via the parametrization:
\begin{equation}
\begin{aligned}
 \underset{\U \in \R^{m \times r}, ~\V \in \R^{n \times r}}{\text{minimize}}
& & f(\U\V^\top) \quad \quad \quad \text{where $r \leq \rank(\Xopt) \leq \left\{m, ~n\right\}$}.
\end{aligned} \label{intro:eq_01}
\end{equation} 
Note that characterizations \eqref{intro:eq_01} and \eqref{intro:eq_00} are equivalent in the case $\text{rank}(\X^\star) = r$.\footnote{Here, by equivalent, we mean that the set of global minima in \eqref{intro:eq_01} contains that of \eqref{intro:eq_00}. It remains an open question though whether the reformulation in \eqref{intro:eq_01} introduces spurious local minima in the factored space for the majority of $f$ cases.}
Observe that such parameterization leads to a very specific kind of non-convexity in $f$.
Even more importantly, proving convergence for these settings becomes a harder task, due to the bi-linearity of the variable space.

\medskip
\noindent \textit{Motivation.} Our motivation for studying \eqref{intro:eq_01} originates from large-scale problem instances:  when $r$ is much smaller than $\min\{m, n\}$, factors $\U \in \R^{m \times r}$ and $\V \in \R^{n \times r}$ contain far fewer variables to maintain and optimize, than $\X = \U\V^\top$. 
Thus, by construction, such parametrization also makes it easier to update and store the iterates $\U, \V$. 

Even more importantly, observe that $\U\V^\top$ reformulation automatically encodes the rank constraint.
Standard approaches, that operate in the original variable space, either enforce the $\rank(\X) \leq r$ constraint at every iteration or involve a nuclear-norm projection. 
Doing so requires computing a truncated SVD\footnote{This holds in the best scenario; in the convex case, where the rank constraint is ``relaxed" by the nuclear norm, the projection onto the nuclear-norm ball often requires a full SVD calculation.} per iteration, which can get cumbersome in large-scale settings.
In stark contrast, working with $f(\U\V^\top)$ replaces singular value computation per iteration with matrix-matrix multiplication operations. 
%, but only matrix-matrix calculations. 
Thus, such non-conventional approach turns out to be a more practical and realistic option, when the dimension of the problem is large. 
We defer this discussion to Section \ref{sec:svd_mm} for some empirical evidence of the above.

\medskip
\noindent \textit{Our contributions.} 
While the computational gains are apparent, such bi-linear reformulations $X = \U\V^\top$often lack theoretical guarantees. Only recently, there have been attempts in providing answers to when and why such non-convex approaches perform well in practice, in the hope that they might provide a new algorithmic paradigm for designing faster and better algorithms; see \citep{jain2015computing, anandkumar2016efficient, tu2016low, zheng2015convergent, chen2015fast, bhojanapalli2016dropping, zhao2015nonconvex, sun2015guaranteed, zheng2016convergent, jin2016provable}.

As we describe below and in greater detail in Section \ref{sec:related}, our work is more general, addressing important settings that could not (as far as we know) be treated by the previous literature. Our contributions can be summarized as follows:

\begin{itemize}

\item[$(i)$] We study a gradient descent algorithm on the non-convex formulation given in \eqref{intro:eq_01} for \emph{non-square} matrices. We call this \emph{Bi-Factored Gradient Descent} (\algo). Recent developments (cited above, and see Section \ref{sec:related} for further details) rely on properties of $f$ for special cases \citep{sun2015guaranteed, tu2016low, zheng2016convergent, zhao2015nonconvex}, and their convergence results seem to rely on this special structure. In this work, we take a more generic view of such factorization techniques, closer to results in convex optimization. We provide local convergence guarantees for general smooth (and strongly convex) $f$ objectives.

\item[$(ii)$] In particular, when $f$ is only (restricted) smooth, we show that a simple lifting technique leads to a local sublinear rate convergence guarantee, using results from that of the square and PSD case \citep{bhojanapalli2016dropping}. Moreover, we provide a simpler and improved proof than \citep{bhojanapalli2016dropping}, which requires a weaker initial condition.

\item[$(iii)$] When $f$ is both (restricted) strongly convex and smooth, results from the PSD case do not readily extend. 
In such cases, of significant importance is the use of a regularizer in the objective, that restricts the geometry of the problem at hand. 
Here, we improve upon \citep{tu2016low, zheng2016convergent, yi2016rpca}---where such a regularizer was used only for the cases of matrix sensing/completion and robust PCA---and solve a different formulation that lead to local linear rate convergence guarantees. 
Our proof technique
proves a significant generalization: using any smooth and strongly convex regularizer on the term $(U^\top U - V^\top V)$, with optimum at zero, one can guarantee linear convergence. 

\item[$(iv)$] Our theory is backed up with extensive experiments, including affine rank minimization (Section \ref{sec:ARM}), compressed noisy image reconstruction from a subset of image pixels (Section \ref{sec:image_exp}), and 1-bit matrix completion tasks (Section \ref{sec:1-bit}). 
Overall, our proposed scheme shows superior recovery performance, as compared to state-of-the-art approaches, while being $(i)$ simple to implement, $(ii)$ scalable in practice and, $(iii)$ versatile to various applications.
\end{itemize}

\subsection{When Such Optimization Criteria Appear in Practice?}{\label{sec:examples}}
In this section, we describe some applications that can be modeled as in \eqref{intro:eq_01}.
The list includes criteria with $(i)$ smooth and strongly convex objective $f$ (\emph{e.g.}, quantum state tomography from a limited set of observations and compressed image de-noising),
and $(ii)$ just smooth objective $f$ (\emph{e.g.}, 1-bit matrix completion and logistic PCA). 
For all cases, we succinctly describe the problem and provide useful references on state-of-the-art approaches; we restrict our discussion on first-order, gradient schemes. 
Some discussion regarding recent developments on factorized approaches is deferred to Section \ref{sec:related}.
Section \ref{sec:exp} provides specific configuration of our algorithm, for representative tasks, and make a comparison with state of the art.

\subsubsection{Matrix Sensing Applications}
Matrix sensing (MS) problems have gained a lot of attention the past two decades, mostly as an extension of Compressed Sensing \citep{donoho2006compressed, baraniuk2007compressive} to matrices; see \citep{fazel2008compressed, recht2010guaranteed}.
The task involves the reconstruction of an \textit{unknown and low-rank ground truth matrix $\X^\star$, from a limited set of measurements}. 
The assumption on low-rankness depends on the application at hand and often is natural: 
\emph{e.g.}, in background subtraction applications, $\X^\star$ is a collection of video frames, stacked as columns, where the ``action" from frame to frame is assumed negligible \citep{candes2011robust, waters2011sparcs}; in (robust) principal component analysis \citep{chandrasekaran2009sparse, candes2011robust}, we intentionally search for a low-rank representation of the data;
in linear system identification, the low rank $\X^\star $ corresponds to a low-order linear, time-invariant system \citep{liu2009interior};
in sensor localization, $\X^\star$ denotes the matrix of pairwise distances with rank-dependence on the, usually, low-dimensional space of the data \citep{javanmard2013localization};
in quantum state tomography, $\X^\star$ denotes the density state matrix of the quantum system and $\X^\star$ is designed to be rank-1 (pure state) or rank-$r$ (almost pure state), for $r$ relatively small \citep{aaronson2007learnability, flammia2012quantum, kalev2015quantum}.

In a non-factored form, MS is expressed via the following criterion:
\begin{equation}{\label{MS:eq_00}}
\begin{aligned}
	& \underset{\X \in \R^{m \times n}}{\text{minimize}}
	& & f(\X) := \tfrac{1}{2} \cdot \|\obs - \linmap(\X)\|_2^2\\
	& \text{subject to}
	& & \text{rank}(\X) \leq r,
\end{aligned}
\end{equation} 
where usually $m \neq n$ and $r \ll \min\{m, ~n\}$. 
Here, 
$y = \linmap\left(\X^\star\right) + \varepsilon \in \R^p$ contains the (possibly noisy) samples, where $p \ll m \cdot n$.  
Key role in recovering $\X^\star$ plays the sampling operator $\linmap$: 
it can be a Gaussian-based linear map \citep{fazel2008compressed, recht2010guaranteed}, 
a Pauli-based measurement operator, used in quantum state tomography applications \citep{liu2011universal}, 
a Fourier-based measurement operator, which leads to computational gains in practice due to their structure \citep{krahmer2011new, recht2010guaranteed}, 
or even a permuted and sub-sampled noiselet linear operator, used in image and video compressive sensing applications \citep{waters2011sparcs}.

Critical assumption for $\linmap$ that renders \eqref{MS:eq_00} a polynomially solvable problem, is the \emph{restricted isometry property} (RIP) for low-rank matrices \citep{candes2011tight}:
\begin{definition}[Restricted Isometry Property (RIP)]\label{def:RIP}
A linear map $\linmap$ satisfies the $r$-RIP with constant $\delta_r$, if
\begin{align*}
(1 - \delta_r)\|\X\|_F^2 \leq \|\linmap(\X)\|_2^2 \leq (1 + \delta_r)\|\X\|_F^2,
\end{align*} is satisfied for all matrices $\X \in \R^{n \times n}$ such that $\text{rank}(\X) \leq r$. 
\end{definition} 
It turns out linear maps that satisfy Definition \ref{def:RIP} also satisfy the (restricted) smoothness and strong convexity assumptions \citep{negahban2012restricted}; see Theorem 2 in \citep{chen2010general} and Section \ref{sec:prelim} for their definition.

\medskip
\noindent \emph{State-of-the-art approaches.} 
The most popularized approach to solve this problem is through \emph{convexification}:
\citep{fazel2002matrix, recht2010guaranteed, candes2009exact} show that the nuclear norm $\|\cdot\|_*$ is the tightest convex relaxation of the non-convex $\text{rank}(\cdot)$ constraint and algorithms involving nuclear norm have been shown to be effective in recovering low-rank matrices. 
This leads to:
\begin{equation}{\label{MS:eq_01}}
\begin{aligned}
	& \underset{\X \in \R^{n \times p}}{\text{minimize}}
	& & f(\X) 
	& \text{subject to}
	& & \|\X\|_{*} \leq t,
\end{aligned}
\end{equation} 
and its regularized variant:
\begin{equation}{\label{MS:eq_02}}
\begin{aligned}
	& \underset{\X \in \R^{n \times p}}{\text{minimize}}
	& & f(\X) + \lambda \cdot \|\X\|_{*}.
\end{aligned}
\end{equation} 
Efficient implementations include Augmented Lagrange Multiplier (\texttt{ALM}) methods \citep{lin2010augmented}, convex conic solvers like the \texttt{TFOCS} software package \citep{becker2011templates} and, convex proximal and projected first-order methods \citep{cai2010singular, becker2011nesta}. 
However, due to the nuclear norm, in most cases these methods are binded with full SVD computations per iteration, which 
constitutes them impractical in large-scale settings.

From a non-convex perspective, algorithms that solve \eqref{MS:eq_00} in a non-factored form include \texttt{SVP} and Randomized \texttt{SVP} algorithms \citep{jain2010guaranteed, becker2013randomized}, 
Riemannian Trust Region Matrix Completion algorithm (\texttt{RTRMC}) \citep{boumal2011rtrmc}, 
\texttt{ADMiRA} \citep{lee2010admira} and the \texttt{Matrix ALPS} framework \citep{kyrillidis2014matrix, tanner2013normalized}.%\footnote{Based on the results of \citep{kyrillidis2014matrix}, we use \texttt{Matrix ALPS II} in our experiments, as that scheme seems to be more effective and faster than the aforementioned algorithms.}

In all cases, algorithms admit fast linear convergence rates. % towards $\X^\star$. 
Moreover, the majority of approaches assumes a \emph{first-order} oracle: 
information of $f$ is provided through its gradient $\nabla f(\X)$. 
For MS, $\nabla f(\X) = -2 \linmap^*\left(\obs - \linmap(\X)\right)$, which requires $O({\rm T}_{\text{map}})$ complexity, where ${\rm T}_{\text{map}}$ denotes the time required to apply linear map (or its adjoint $\linmap^*$) $\linmap$. 
Formulations \eqref{MS:eq_00}-\eqref{MS:eq_02} require at least one top-$r$ SVD calculation per iteration; this translates into additional $O(mnr)$ complexity.

\medskip
\noindent \emph{Motivation for factorizing \eqref{MS:eq_00}. }
Problem \eqref{MS:eq_00} can be factorized as follows:
\begin{equation}{\label{MS:trans_eq_00}}
\begin{aligned}
	& \underset{\U \in \R^{n \times r}, \V \in \R^{p \times r}}{\text{minimize}}
	& &  f(\U\V^\top) := \tfrac{1}{2} \cdot \|\obs - \linmap(\U\V^\top)\|_2^2.
\end{aligned}
\end{equation} 
For this case and assuming a first-order oracle, the gradient of $f$ with respect to $\U$ and $\V$ can be computed respectively as $\nabla_{\U} f(\U \V^\top) := \gradf(\X) \V$ and $\nabla_{\V} f(\U \V^\top) := \gradf(\X)^\top \U$, respectively. 
This translates into $2\cdot O({\rm T}_{\text{map}} + mnr)$ time complexity. 
However, one avoids performing any SVD calculations per iteration, which in practice is considered a great computational bottleneck, even for moderate $r$ values.
Thus, if there exist linearly convergent algorithms for \eqref{MS:trans_eq_00}, intuition indicates that one could obtain computational gains.

\subsubsection{Logistic PCA and Low-Rank Estimation on Binary Data}
Finding a low-rank approximation of binary matrices has gain a lot of interest recently, due to the wide appearance of categorical responses in real world applications \citep{schein2003logisticPCA, chiang2014prediction, johnson2014logistic, verstrepen2015collaborative, gupta2015collectively, liu2016neighborhood}.
While regular linear principal component analysis (PCA) is still applicable for binary or categorical data, $(i)$ the way data are pre-processed (\emph{e.g.}, centering data before applying PCA), and/or $(ii)$ the least-squares objective in PCA, constitute it a natural choice for \emph{real-valued} data, where observations are assumed to follow a Gaussian distribution.
\citep{tipping1999probabilistic, collins2001generalization} propose generalized versions of PCA for other type of data sets:
In the case of binary data, this leads to Logistic Principal Component Analysis (Logistic PCA), where each binary data vector is assumed to follow the multivariate Bernoulli distribution, parametrized by the principal components that live in a $r$-dimensional subspace. 
Moreover, collaborative filtering on binary data and network sign prediction tasks  have shown that standard least-squares loss functions perform poorly, while generic logistic loss optimization shows more interpretable and promising results.

To rigorously formulate the problem, let $\Y \in \left\{0, 1\right\}^{m \times n}$ be the observed binary matrix, where each of the $m$ rows stores a $n$-dimensional binary feature vector. 
Further, assume that each entry $\Y_{ij}$ is drawn from a Bernoulli distribution with mean $q_{ij}$, according to: $\mathbb{P}\left[\Y_{ij} ~|~ q_{ij} \right] = q_{ij}^{Y_{ij}} \cdot (1 - q_{ij})^{1- \Y_{ij}}$. 
Define the log-odds parameter $\X_{ij} = \log \left( \tfrac{q_{ij}}{1 - q_{ij}}\right)$ and the logistic function $\sigma(\X_{ij}) = \left(1 + e^{-\X_{ij}}\right)^{-1}$. 
Then, we equivalently have $\mathbb{P}\left[\Y_{ij} ~|~ X_{ij} \right] = \sigma(\X_{ij})^{Y_{ij}} \cdot \sigma(-X_{ij})^{1-Y_{ij}}$, or in matrix form,
\begin{align*}
\mathbb{P}\left[\Y ~|~ X \right] = \prod_{ij} \sigma(\X_{ij})^{Y_{ij}} \cdot \sigma(-X_{ij})^{1-Y_{ij}},
\end{align*} where we assume independence among entries of $\Y$. 
The negative log-likelihood for log-odds parameter $\X$ is given by:
\begin{align*}
f(\X) := -\sum_{ij} \left(Y_{ij} \cdot \log \sigma(\X_{ij}) + \left(1 - \Y_{ij}\right) \cdot \log \sigma(-\X_{ij})\right).
\end{align*}
Assuming a compact, \emph{i.e.}, low-rank, representation for the latent variable $\X$, we end up with the following optimization problem:
\begin{equation}{\label{logPCA:eq_00}}
\begin{aligned}
	& \underset{\X \in \R^{m \times n}}{\text{minimize}}
	& & f(\X) := -\sum_{ij} \left(Y_{ij} \cdot \log \sigma(\X_{ij}) + \left(1 - \Y_{ij}\right) \cdot \log \sigma(-\X_{ij})\right)\\
	& \text{subject to}
	& & \text{rank}(\X) \leq r;
\end{aligned}
\end{equation} observe that the objective criterion is just a smooth convex loss function. 

\medskip
\noindent \emph{State-of-the-art approaches.}\footnote{Here, we note that \citep{landgraf2015dimensionality} proposes a slightly different way to generalize PCA than \citep{collins2001generalization}, based on a different interpretation of Pearson's PCA formulation \citep{pearson1901principal}. 
The resulting formulation looks for a \emph{weighted} projection matrix $\U\U^\top$ (instead of $\U\V^\top)$, where the number of parameters does not increase with the number of samples and the application of principal components to new data requires only one matrix multiplication. For this case, the authors in \citep{landgraf2015dimensionality} propose, among others, an alternating minimization technique where convergence to a local minimum is guaranteed. Even for this case though, our framework applies.} 
In \citep{chiang2014prediction}, the authors consider the problem of \emph{sign prediction} of edges in a signed network and cast it as a low-rank matrix completion problem: In order to model sign inconsistencies between the entries of binary matrices, the authors consider more appropriate loss functions to minimize, among which is the logistic loss. The proposed algorithmic solution follows (stochastic) gradient descent motions; however, no guarantees are provided.
\citep{johnson2014logistic} utilizes logistic PCA for collaborative filtering on implicit feedback data (page clicks and views, purchases, etc.):  to find a local minimum, an alternating gradient descent procedure is used---further, the authors use AdaGrad \citep{duchi2011adaptive} to adaptively update the gradient descent step size, in order to reduce the number of iterations for convergence. 
A similar alternating gradient descent approach is followed in \citep{schein2003logisticPCA}, with no known theoretical guarantees.

\medskip
\noindent \emph{Motivation for factorizing \eqref{logPCA:eq_00}.} Following same arguments as before, in logistic PCA and logistic matrix factorization problems, we often assume that the observation binary matrix is generated as the \texttt{sign} operation on a linear factored model: $\texttt{sign}(\U\V^T)$.
%Assuming the probability of $\{\pm 1\}$ values is distributed according to a logistic function, 
Parameterized by the latent factors $\U, \V$, we obtain the following optimization criterion:
\begin{equation}{\label{logPCA:eq_01}}
\begin{aligned}
	& \underset{\U \in \R^{m \times r}, ~\V \in \R^{n \times r}}{\text{minimize}}
	& & f(\U\V^\top) := -\sum_{ij} \left(Y_{ij} \cdot \log \sigma(\U_{i} \V_j^\top) + \left(1 - \Y_{ij}\right) \cdot \log \sigma(-\U_{i} \V_j^\top)\right),
\end{aligned}
\end{equation} where $\U_i$, $V_j$ represent the $i$-th and $j$-th row of $\U$ and $\V$, respectively.

\subsection{Related Work}{\label{sec:related}}

As it is apparent from the discussion above, this is not the first time such transformations have been considered in practice. 
Early works on principal component analysis \citep{christoffersson1970one, ruhe1974numerical} and non-linear estimation procedures \citep{wold1969nonlinear}, use this technique as a heuristic; empirical evaluations show that such heuristics work well in practice \citep{rennie2005fast, haeffele2014structured, aravkin2014fast}.
\citep{burer2003nonlinear, burer2005local} further popularized these ideas for solving SDPs: their approach embeds the PSD and linear constraints into the objective and applies low-rank variable re-parameterization. While the constraint considered here is of different nature---\emph{i.e}, rank constraint vs. PSD constraint---the motivation is similar: in SDPs, by representing the solution as a product of two factor matrices, one can remove the positive semi-definite constraint and thus, avoid computationally expensive projections onto the PSD cone.

We provide an overview of algorithms that solve instances of \eqref{intro:eq_01}; for discussions on methods that operate on $\X$ directly, we defer the reader to \citep{agarwal2010fast, jaggi2010simple, kyrillidis2014matrix} for more details; see also Table \ref{table:algo_comp_summary} for an overview of the discussion below. 
We divide our discussion into two problem settings: $(i)$ $\X^\star$ is square and PSD and, $(ii)$ $\X^\star$ is non-square. 

\begin{table*}[!ht]
\centering
\begin{small}
\rowcolors{2}{white}{black!05!white}
\begin{tabular}{c c c c c c c c c c c}
  \toprule
  Algo. & & $X$ & & $UU^\top$ & & $UV^\top$ & & Rate & & Function $f$ \\ 
  \cmidrule{1-1} \cmidrule{3-3} \cmidrule{5-5} \cmidrule{7-7} \cmidrule{9-9} \cmidrule{11-11}
      \cite{jain2013low} & & \cmark & & \xmark & & \xmark & & Linear & & MS \\
      \cite{tanner2013normalized} & & \cmark & & \xmark & & \xmark & & Linear & & MS \\
      \cite{kyrillidis2014matrix} & & \cmark & & \xmark & & \xmark & & Linear & & MS \\
      \cite{hazan2008sparse} & & \xmark & & \cmark & & \xmark & & Sublinear & & MS \\
      \cite{laue2012hybrid} & & \xmark & & \cmark & & \xmark & & Sublinear & & Convex $f$ \\
      \cite{chen2015fast} & & \xmark & & \cmark & & \xmark & & (Sub)linear & & Generic $f^{\dagger}$ \\
      \cite{bhojanapalli2016dropping} & & \xmark & & \cmark & & \xmark & & (Sub)linear & & Convex $f$ \\
      \cite{jaggi2010simple} & & \xmark & & \cmark & & \cmark & & Sublinear & & Convex $f$ \\
      \cite{tu2016low} & & \xmark & & \cmark & & \cmark & & Linear & & MS \\
      \cite{zheng2015convergent} & & \xmark & & \cmark & & \cmark & & Linear & & MS \\
      \cite{hardt2014fast} & & \xmark & & \cmark & & \cmark & & Linear & & MC \\
      \cite{jain2013low} & & \xmark & & \cmark & & \cmark & & Linear & & MC, MS \\
      \cite{sun2015guaranteed} & & \xmark & & \cmark & & \cmark & & Linear & & MC \\
      \cite{zhao2015nonconvex} & & \xmark & & \cmark & & \cmark & & Linear & & Strongly convex $f$ \\
      \midrule
      This work & & \xmark & & \cmark & & \cmark & & (Sub)linear & & Convex $f$ \\
  \bottomrule
\end{tabular}
\end{small}
\caption{Summary of selected \textbf{non-convex} solvers for low-rank inference problems. $X$, $UU^\top$, and $UV^\top$ denote the setting the algorithm works at; $X$ means that no factorization is considered.  
``\textit{Rate}" describes the convergence rate; ``\textit{(Sub)linear}" denotes that both sublinear and linear rates are proved, depending on the nature of $f$. ``\textit{Function $f$}" denotes the problems cases covered by each algorithm: ``\textit{MS}" and ``\textit{MC}" stand for matrix sensing and completion, respectively; \textit{``Convex $f$"} corresponds to standard smooth and strongly convex functions $f$. For the case of \cite{chen2015fast}, \textit{``Generic $f^{\dagger}$"} corresponds to a specific class of functions that can be even concave but should satisfy specific conditions; see \cite{chen2015fast}.} \label{table:algo_comp_summary} 
\end{table*}

\medskip
\noindent \textit{Square and PSD $\X^\star$.}
A rank-$r$ matrix $\X \in \R^{n \times n}$ is PSD if and only if it can be factored as $\X = \U\U^\top$ for some $\U \in \R^{n \times r}$. 
This is a special case of the problem discussed above, where $m = n$ and \eqref{intro:eq_00} includes a PSD constraint. Thus, after the re-parameterization, \eqref{intro:eq_01} takes the form:
\begin{equation}
\begin{aligned}
 \underset{\U \in \R^{n \times r}}{\text{minimize}}
& & f(\U\U^\top) \quad \quad \quad \text{where $r = \text{rank}(\X^\star) \leq n$}.
\end{aligned} \label{intro:eq_02}
\end{equation} 

Several recent works have studied \eqref{intro:eq_02}. 
For the special case where $f$ is a least-squares objective for an underlying linear system, %\footnote{This includes affine rank minimization problems, as well as matrix completion instances.}, 
\citep{tu2016low} and \citep{zheng2015convergent} propose gradient descent schemes that function on the factor $\U$. 
Both studies employ careful initialization (performing few iterations of \texttt{SVP} \citep{jain2010guaranteed} for the former and, using a spectral initialization procedure---as in \citep{candes2015phase2}---for the latter) and step size selection, in order to prove convergence.\footnote{Recently, \citep{ge2016matrix} and \citep{bhojanapalli2016global} proved that $\U\U^\top$ factorization introduces no spurious local minima for the cases of matrix completion and sensing, respectively: random initialization eventually leads to convergence to the optimal $\X^\star$ (or close to $\X^\star$ in the approximately low rank case). The extension of these results to the non-square matrix sensing setting can be found in \cite{park2016non}. }
However, their analysis is designed only for least-squares instances of $f$. 
Some results and discussion on their step size selection/initialization and how it compares with this work are provided in Section \ref{sec:exp}.

The work of \citep{chen2015fast} proposes a first-order algorithm for \eqref{intro:eq_02}, where $f$ is more generic.
The algorithmic solution proposed can handle additional constraints on the factors $\U$; the nature of these constraints depends on the problem at hand.\footnote{Any additional constraints should satisfy the 
\textit{faithfulness} property: a constraint set $\C$ is faithful if for each $\U \in \C$, within some bounded radius from 
optimal point, we are guaranteed that the closest (in the Euclidean sense) rotation of optimal $\Uo$ lies within 
$\mathcal{U}$.} 
The authors present a broad set of exemplars for $f$---matrix completion and sensing, as well as sparse PCA, among others.
For each problem, a set of assumptions need to be satisfied; \textit{i.e.}, faithfulness, local descent, local Lipschitz 
and local smoothness conditions; see \citep{chen2015fast} for more details. 
Under such assumptions and with proper initialization, one can prove convergence with $O(\sfrac{1}{\varepsilon})$ 
or $O(\log(\sfrac{1}{\varepsilon}))$ rate, depending on the nature of $f$, and for problems that even fail to be locally convex.

\citep{bhojanapalli2016dropping} proposes \emph{Factored Gradient Descent} (\fgd) algorithm for \eqref{intro:eq_02}. 
\fgd is also a first-order scheme; key ingredient for convergence is a novel step size selection that can be used for any $f$, as long as it is (restricted) gradient Lipschitz continuous; when $f$ is further (restricted) strongly convex, their analysis lead to faster convergence rates.
Using proper initialization, this is the first paper that \emph{provably} solves \eqref{intro:eq_02} for general convex functions $f$ and under common convex assumptions. 
An extension of these ideas to some constrained cases can be found in \citep{park2016provable}.

To summarize, most of these results guarantee convergence---up to linear rate---on the factored space, starting from a ``good'' initialization point and employing a carefully selected step size.

\medskip
\noindent \textit{Non-square $\X^\star$.}
\citep{jain2013low} propose \texttt{AltMinSense}, an alternating minimization algorithm for matrix sensing and matrix completion problems. 
This is one of the first works to prove linear convergence in solving \eqref{intro:eq_01} for the MS model.
\citep{hardt2014fast} improves upon \citep{jain2013low} for the case of reasonably well-conditioned matrices. 
Their algorithm handles problem cases with bad condition number and gaps in their spectrum \citep{mary2016personal}.
Recently, \citep{tu2016low} extended the \texttt{Procrustes Flow} algorithm to the non-square case, where gradient descent, instead of exact alternating minimization, is utilized. \citep{zheng2016convergent} extended the first-order method of \citep{chen2015fast} for matrix completion to the rectangular case. All the studies above focus on the case of least-squares objective $f$.

\citep{sun2015guaranteed} generalize the results in \citep{jain2013low, hardt2014fast}: 
the authors show that, under common incoherence conditions and sampling assumptions, most first-order variants (\textit{e.g.}, gradient descent, alternating minimization schemes and stochastic gradient descent, among others) indeed converge to the low-rank ground truth $\X^\star$. 
%Specifically, for the alternating gradient descent variant, the authors propose several step size selection procedures\footnote{However, the restricted Armijo rule, as well as the line search procedure, can be applied to any the aforementioned algorithms too.}. 
Both the theory and the algorithm proposed are restricted to the matrix completion objective.

Recently, \citep{zhao2015nonconvex}---based on the inexact first-order oracle, previously used in \citep{balakrishnan2014statistical}---proved that linear convergence is guaranteed if $f(UV^\top)$ is strongly convex over either $U$ and $V$, when the other is fixed. 
While the technique applies for generic $f$ and for non-square $\X$, the authors provide algorithmic solutions only for matrix completion / matrix sensing settings.\footnote{\textit{E.g.}, in the gradient descent case, the step size proposed depends on RIP \citep{recht2010guaranteed} constants and it is not clear what a good step size would be in other problem settings.} 
Furthermore, their algorithm requires QR-decompositions after each update of $\U$ and $\V$; this is required in order to control the notion of inexact first order oracle.

\section{Preliminaries}{\label{sec:prelim}}

\textit{Notation.} 
For matrices $\X, \Y \in \R^{m \times n}$, $\ip{\X}{\Y} = \trace\left(\X^\top \Y \right)$ represents their inner product. 
We use $\norm{\X}_F$ and $\sigma_1(\X)$ for the Frobenius and spectral norms of a matrix, respectively; we also use $\|\X\|_2$ to denote spectral norm.
Moreover, we denote as $\sigma_i(\X)$ the $i$-th singular value of $\X$.
For a rank-$r$ matrix $\X = \U\V^\top$, the gradient of $f$ w.r.t. $\U$ and $\V$ is $\gradf(\U \V^\top) \V$ and $\gradf(\U \V^\top)^\top \U$, respectively.  
With a slight abuse of notation, we will also use the terms $\nabla_{\U} f(\U \V^\top) := \gradf(\U \V^\top) \V$ and $\nabla_{\V} f(\U \V^\top) := \gradf(\U \V^\top)^\top \U$.

Given a matrix $\X$, we denote its best rank-$r$ approximation with $\X_r$; $\X_r$ can be computed in polynomial time via the SVD.
For our discussions from now on and in an attempt to simplify our notation, we denote the optimum point we search for as $\Xopt_r$, both $(i)$ in the case where we intentionally restrict our search to obtain a rank-$r$ approximation of $\Xopt$---while $\text{rank}(\Xopt) > r$---and $(i)$ in the case where $\Xopt \equiv \Xopt_r$, \emph{i.e.}, by default, the optimum point is of rank $r$.

An important issue in optimizing $f$ over the factored space is the existence of non-unique possible factorizations for a given $\X$. 
Since we are interested in obtaining a low-rank solution in the original space, we need a notion of distance to the low-rank solution $\Xoptr$ over the factors. 
Among infinitely many possible decompositions of $\Xoptr$, we focus on the set of ``equally-footed'' factorizations \citep{tu2016low}:
\begin{align}
\mathcal{X}^\star_r = \Big\{ &\left(\Uo,\Vo\right):~\Uo \in \R^{m \times r}, \Vo \in \R^{n \times r}, \nonumber \\ &\quad \quad \quad \quad \quad \Uo {\Vo}^\top = \Xoptr, \sigma_i(\Uo) = \sigma_i(\Vo) = \sigma_i(\Xoptr)^{1/2}, \forall i \in [r]\Big\}. \label{prelim:eq_footing}
\end{align}
Note that $(\Uo,\Vo) \in \mathcal{X}^\star_r$ if and only if the pair can be written as
$\Uo = A^\star \Sigma^{\star 1/2} R ,~ \Vo = B^\star \Sigma^{\star 1/2} R$,
where $A^\star \Sigma^\star B^\star$ is the singular value decomposition of $\Xoptr$, and $R \in \mathbb{R}^{r \times r}$ is an orthogonal matrix. 

Given a pair $(\U,\V)$, we define the distance to $\Xoptr$ as:
\begin{align*}
\dist\left(\U,\V;\Xoptr\right) = \min_{(\Uo,\Vo) \in \mathcal{X}^\star_r} \normf{\begin{bmatrix} \U \\ \V \end{bmatrix} - \begin{bmatrix} \Uo \\ \Vo \end{bmatrix}}.
\end{align*}

\medskip
\noindent \textit{Assumptions.} We consider applications that can be described $(i)$ either by restricted \emph{strongly} convex functions $f$ with \emph{gradient Lipschitz continuity}, or $(ii)$ by convex functions $f$ that have only Lipschitz continuous gradients.\footnote{Our ideas can be extended in a similar fashion to the case of restricted strong convexity \citep{negahban2012restricted, agarwal2010fast}.} We state these standard definitions below.

\begin{definition}{\label{prelim:def_00}}
Let $f: \R^{m \times n} \rightarrow \R$ be a convex differentiable function. Then, $f$ is gradient Lipschitz continuous with parameter $L$ (or $L$-smooth) if: 
\begin{equation}
\norm{\gradf\left(\X\right) - \gradf\left(\Y\right)}_F \leq L \cdot \norm{\X - \Y}_F, 
\end{equation} $\forall \X, \Y \in  \R^{m \times n}.$
\end{definition} 

\begin{definition}{\label{prelim:def_01}}
Let $f: \R^{m \times n} \rightarrow \R$ be convex and differentiable. Then, $f$ is $\mu$-strongly convex if: 
\begin{equation}\label{eq:sc}
f(\Y) \geq f(\X) + \ip{\gradf\left(\X\right)}{\Y - \X} + \tfrac{\mu}{2} \norm{Y - \X}_F^2,
\end{equation} $\forall \X, \Y \in \set{X} \subseteq \R^{m \times n}$.
\end{definition}

\medskip
\noindent \textit{The Factored Gradient Descent (\fgd) algorithm.} Part of our contributions is inspired by the work of \citep{bhojanapalli2016dropping}, where the \fgd algorithm is proposed.
For completeness, we describe here the problem they consider and the proposed algorithm.
\citep{bhojanapalli2016dropping} considers the problem:
\begin{equation*}
\begin{aligned}
	& \underset{X \in \R^{n \times n}}{\text{minimize}}
	& & f(X),
	& \text{subject to}
	& & \X \succeq 0,
\end{aligned}
\end{equation*} 
and proposes the following first-order recursion for its solution:
\begin{align*}
\U_{t+1} = \U_t - \eta \cdot \nabla f(\U_t \U_t^\top) \cdot U_t.
\end{align*} 
Key property in their analysis is the positive semi-definiteness of the feasible space.
For a proper initialization and step size, \citep{bhojanapalli2016dropping} show sublinear and linear convergence rates towards optimum, depending on the nature of $f$.

\section{The Bi-Factored Gradient Descent (\algo) Algorithm}

In this section, we provide an overview of the \emph{Bi-Factored Gradient Descent} (\algo) algorithm for two problem settings in \eqref{intro:eq_00}: $(i)$ $f$ being a $L$-smooth convex function and, $(ii)$ $f$ being $L$-smooth and $\mu$-strongly convex. 
For both cases, we assume a good initialization point $\X_0 = \U_0 \V_0^\top$ is provided; for a discussion regarding initialization, see Section \ref{sec:init}. 
Given $\X_0$ and under proper assumptions, we further describe the theoretical guarantees that accompany \algo.

As introduced in Section \ref{sec:intro}, \algo is built upon non-convex gradient descent over each factor $\U$ and $\V$, written as
\begin{align} \label{eqn:nonconvex_gd}
\U_{t+1} = \U_t - \eta \cdot \nabla_\U f(\U_t \V_t^\top), &\quad  
\V_{t+1} = \V_t - \eta \cdot \nabla_\V f(\U_t \V_t^\top).
\end{align}
When $f$ is convex and smooth, \algo follows exactly the motions in \eqref{eqn:nonconvex_gd}; in the case where $f$ is also strongly convex, \algo is based on a different set of recursions, which we discuss in more detail in the rest of the section.

\subsection{Reduction to \fgd: When $f$ is Convex and $L$-Smooth} \label{sec:bfgd_algo_smooth}
In \citep{jaggi2010simple}, the authors describe a simple technique to transform problems similar to \eqref{intro:eq_00} into problems where we look for a square and PSD solution. 
The key idea is to \emph{lift} the problem and introduce a stacked matrix of the two factors, as follows:
\begin{align*}
W = \begin{bmatrix} U \\ V \end{bmatrix} \in \R^{(m+n) \times r},
\end{align*}
and optimize over a new function $\hat{f} : \R^{(m+n)\times(m+n)} \to \R$ defined as
\begin{align*}
\hat{f} \left( WW^\top \right) = \hat{f} \left( \begin{bmatrix} UU^\top & UV^\top \\ VU^\top & VV^\top \end{bmatrix} \right) = f(UV^\top).
\end{align*}
Following this idea, one can utilize algorithmic solutions designed only to work on square and PSD-based instances of \eqref{intro:eq_00}, where $f$ is just $L$-smooth. 
Here, we use the \emph{Factored Gradient Descent} (\fgd) algorithm of \citep{bhojanapalli2016dropping} on the $W$-space, as follows:
\begin{align} \label{eqn:nonconvex_fgd}
W_{t+1} = W_t - \eta \cdot \nabla_W \hat{f}(W_t W_t^\top).
\end{align}
Then, it is easy to verify the following remark: %into one can apply \fgd to our problem, as presented in the following.
\begin{remark} \label{prop:reduction}
Define
\begin{align*}
\hat{f} \left( \begin{bmatrix} A & B \\ C & D \end{bmatrix} \right) = \frac{1}{2}f(B) + \frac{1}{2} f(C^\top)
\end{align*}
for $A \in \R^{m \times m}$, $B \in \R^{m \times n}$, $C \in \R^{n \times m}$, $D \in \R^{n \times n}$. Then \fgd for minimizing $\fh(WW^\top)$ with the stacked matrix $W = [U^\top, V^\top]^\top \in \R^{(m+n) \times r}$ is equivalent to \eqref{eqn:nonconvex_gd}.
\end{remark}
A natural question is whether this reduction gives a desirable convergence behavior. 
Since \fgd solves for a different function $\hat{f}$ from the original $f$, the convergence analysis depends also on $\hat{f}$. 
When $f$ is convex and smooth, we can rely on the result from \citep{bhojanapalli2016dropping}.
\begin{proposition} \label{lem:reduction}	
If $f$ is convex and $L$-smooth, then $\hat{f}$ is convex and $\tfrac{L}{2}$-smooth.
\end{proposition}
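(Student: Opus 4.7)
The plan is to verify both claims directly from the definition of $\hat{f}$, exploiting the fact that $\hat{f}(M)$ only depends on the two off-diagonal blocks $B$ and $C$ of $M$, and each dependence is just $f$ composed with a linear (in fact coordinate-projection or transpose) map.

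For convexity, I would note that $\hat{f}$ is the sum of two functions of the form $f \circ T_1$ and $f \circ T_2$ (scaled by $1/2$), where $T_1 : M \mapsto B$ and $T_2 : M \mapsto C^\top$ are linear maps on $\R^{(m+n)\times(m+n)}$. Composition of a convex function with a linear map preserves convexity, so each summand is convex, and hence $\hat{f}$ is convex.

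For the smoothness bound, I would compute the gradient blockwise. Writing $M = \begin{bmatrix} A & B \\ C & D \end{bmatrix}$, the partial derivatives are $\partial_A \hat{f} = 0$, $\partial_B \hat{f} = \tfrac{1}{2}\nabla f(B)$, $\partial_C \hat{f} = \tfrac{1}{2}(\nabla f(C^\top))^\top$, and $\partial_D \hat{f} = 0$. Then for $M, M'$ with corresponding blocks, the squared Frobenius norm of $\nabla \hat{f}(M) - \nabla \hat{f}(M')$ equals
\begin{align*}
\tfrac{1}{4}\|\nabla f(B) - \nabla f(B')\|_F^2 + \tfrac{1}{4}\|\nabla f(C^\top) - \nabla f(C'^\top)\|_F^2.
\end{align*}
Applying $L$-smoothness of $f$ on each term, this is upper bounded by $\tfrac{L^2}{4}(\|B - B'\|_F^2 + \|C - C'\|_F^2)$, and since $\|B-B'\|_F^2 + \|C-C'\|_F^2 \leq \|M - M'\|_F^2$, we obtain $\|\nabla \hat{f}(M) - \nabla \hat{f}(M')\|_F \leq \tfrac{L}{2}\|M - M'\|_F$.

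There is essentially no obstacle here; the statement is a short consequence of the fact that the factor $\tfrac{1}{2}$ in front of each copy of $f$ exactly splits the Lipschitz constant, and the block structure guarantees the two deviations $\|B-B'\|_F^2$ and $\|C-C'\|_F^2$ add to at most $\|M-M'\|_F^2$. The only mild care is in the $C$-block, where the dependence is through $C^\top$; but since transposition is a Frobenius isometry, $\|\nabla f(C^\top) - \nabla f(C'^\top)\|_F \leq L \|C^\top - C'^\top\|_F = L\|C - C'\|_F$, so the argument goes through unchanged.
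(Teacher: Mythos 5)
Your proof is correct and follows essentially the same route as the paper: compute the block structure of $\nabla \hat{f}$, apply $L$-smoothness of $f$ to the $B$- and $C$-blocks separately, and observe that the two block deviations sum to at most $\|M - M'\|_F^2$. You additionally spell out the convexity argument (composition with linear maps), which the paper leaves implicit, and correctly note that transposition is a Frobenius isometry.
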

\begin{proof}
For any $Z_1 = \begin{bmatrix} A_1 & B_1 \\ C_1 & D_1 \end{bmatrix}, Z_2 = \begin{bmatrix} A_2 & B_2 \\ C_2 & D_2 \end{bmatrix} \in \R^{(m+n) \times (m+n)}$, we have
\begin{align*}
\left\| \nabla \hat{f}(Z_1) - \nabla \hat{f}(Z_2) \right\|_F
&= \left\| \begin{bmatrix} 0 & \tfrac{1}{2} \cdot \nabla f(B_1) \\ \tfrac{1}{2} \cdot \nabla f(C_1^\top)^\top & 0 \end{bmatrix}
         - \begin{bmatrix} 0 & \tfrac{1}{2} \cdot \nabla f(B_2) \\ \tfrac{1}{2} \cdot \nabla f(C_2^\top)^\top & 0 \end{bmatrix}
         \right\|_F \\
&= \tfrac{1}{2} \cdot \sqrt{ \left\| \nabla f(B_1) - \nabla f(B_2) \right\|_F^2 + \left\| \nabla f(C_1^\top) - \nabla f(C_2^\top) \right\|_F^2} \\
&\le \tfrac{L}{2} \cdot \sqrt{ \left\| B_1 - B_2 \right\|_F^2 + \left\| C_1 - C_2 \right\|_F^2} \\
&\le \tfrac{L}{2} \cdot \|Z_1 - Z_2\|_F
\end{align*}
where the first inequality follows from the $L$-smoothness of $f$.
\end{proof}
Based on the above proposition, we use \fgd to solve \eqref{intro:eq_01} with $\hat{f}$: its procedure is exactly \eqref{eqn:nonconvex_gd}, with a different step size: 
\begin{align} \label{eq:new_step_size_smooth}
\eta &\le \frac{1}{20L \normt{\begin{bmatrix} \U_0 \\ \V_0 \end{bmatrix}}^2 + 3 \|\gradf(\U_0\V_0^\top)\|_2}.
\end{align}
While one can rely on the sublinear convergence analysis from \citep{bhojanapalli2016dropping}, we provide a new guarantee with a weaker initial condition; see Section \ref{sec:guarantees}.
%In Section \ref{sec:guarantees}, we discuss a convergence guarantee under this step size condition.

\subsection{Using \algo when $f$ is $L$-Smooth and Strongly Convex}{\label{sec:bfgd_strcvx}}

Assume $f$ function satisfies both properties in Definitions \ref{prelim:def_00} and \ref{prelim:def_01}. 
In this case, we cannot simply rely on the lifting technique as above since $\hat{f}$ is clearly not strongly convex. 
Instead, we consider a slight variation, where we appropriately regularize the objective and force the solution pair $(\widehat{\U}, \widehat{\V})$ to be ``balanced".
This regularization is based on the set of optimal pairs $(U^\star, V^\star)$ in $\mathcal{X}^\star_r$, as defined in \eqref{prelim:eq_footing}.
In particular, given $\mathcal{X}^\star_r$, the equivalent optimization problem that ``forces" convergence to balanced $(\U^\star, \V^\star)$ is as follows:
\begin{equation}{\label{eq:problem}}
\begin{aligned}
 \underset{\U \in \R^{m \times r}, ~\V \in \R^{n \times r}}{\text{minimize}}
& & f(\U\V^\top) + \lambda \cdot g(\U^\top \U - \V^\top \V),
\end{aligned}
\end{equation}
where $g : \R^{r \times r} \to \R$ is an additional convex \emph{regularizer}. 
We require:
\begin{itemize}[leftmargin=0.9cm]
\item[$(i)$] $g$ is convex and minimized at zero point; \emph{i.e.}, $\nabla g(0) = 0$. \vspace{-0.2cm}
\item[$(ii)$] The gradient, $\nabla g(U^\top U - V^\top V) \in \R^{r \times r}$ , is symmetric for any such pair. \vspace{-0.2cm}
\item[$(iii)$] $g$ is $\mu_g$-strongly convex and $L_g$-smooth.
\end{itemize}
Under such assumptions, the addition of $g$ in the objective just restricts the set of optimum points to be ``balanced", \emph{i.e., the minimizer of \eqref{eq:problem} minimizes also \eqref{intro:eq_01}}.\footnote{In particular, for any rank-$r$ solution $\factor$ in \eqref{intro:eq_01}, there is a factorization $(\Ut,\Vt)$ minimizing $g$ with the same function value $f(\Ut \Vt^\top) = f(\factor)$, which are 
$$
\Ut = A \Sigma^{\frac{1}{2}} ,\quad \Vt = B \Sigma^{\frac{1}{2}}
$$
where $\factor = A \Sigma B^\top$ is the singular value decomposition.}

\medskip
\noindent \textit{The necessity of the regularizer.}
As we show next, the theoretical guarantees of \algo heavily depend on the condition number of the pair $(\Uo, \Vo)$ the algorithm converges to.
In particular, one of the requirements of \algo is that every estimate $\U_t$ (resp. $\V_t$) be ``relatively close'' to the convergent point $\Uo$ (resp. $\Vo$), such that their distance $\|\U_t - \Uo\|_F$ is bounded by a function of $\sigma_r(\Uo)$, for all $t$. 
Then, it is easy to observe that, for arbitrarily ill-conditioned $(\Uo, \Vo) \in \set{X}_r^*$, such a condition might not be easily satisfied by \algo per iteration\footnote{Even if $\factor$ is close to $\factoropt$, the condition numbers of $U$ and $V$ can be much larger than the condition number of $\factor$.}, unless we ``force'' the sequence of estimates $(\U_t, \V_t), ~\forall t,$ to converge to a better conditioned pair $(\Uo, \Vo)$.
This is the key role of regularizer $g$: it guarantees putative estimates $U_t$ and $V_t$ are not too ill-conditioned, per iteration.  
%Note that adding $g$ does not change the optimum of $f$ in the original $X$ space.

An example of $g$ is the Frobenius norm (weighted by $\mu/2$), as proposed in \citep{tu2016low}. Other examples are sums of element-wise (at least) $\mu_g$-strongly convex and (at most) $L_g$-gradient Lipschitz functions (of the form $g(X) = \sum_{i,j} g_{ij}(X_{ij})$) with the optimum at zero. However, any other user-friendly $g$ can be selected, as long as it satisfies the above conditions. 
We show in this paper that any such regularizer results provably in convergence, with attractive convergence rate; see Section \ref{sec:regularizer} for a toy example where the addition of $g$ leads to faster convergence rate in practice.
%As we observe in practice though, in most cases, when the initial point $(U_0, V_0)$ is well-conditioned, one can remove $g$ from the objective and observe slightly different performance in practice.
%See Section \ref{} for a case where $g$ leads to faster convergence.

\medskip
\noindent \textit{The \algo algorithm.}
\algo is a first-order, gradient descent algorithm for \eqref{eq:problem}, that operates on the factored space $(\U, \V)$ in an alternating fashion. 
Principal components of \algo is a proper step size selection and a ``decent" initialization point.
\algo can be considered as the non-squared extension of \texttt{FGD}\xspace algorithm in \citep{bhojanapalli2016dropping}, which is specifically designed to solve problems as in \eqref{intro:eq_01}, for $\U = \V$ and $m = n$. 
The key differences with \texttt{FGD} though, other than the necessity of a regularizer $g$, are:
\begin{itemize}[leftmargin=0.9cm]
\item [$(i)$] Our analysis leads to provable convergence results in the non-square case. Such a result cannot be trivially obtained from \citep{bhojanapalli2016dropping}.
\item[$(ii)$] The main recursion followed is different in the two schemes: in the non-squared case, we update the left and right factors $(\U, ~\V)$ with a different rule, according to which:
\begin{align*}
\U_{t+1} &= \U_t - \eta \left( \nabla_\U f(\U_t \V_t^\top) + \lambda \cdot \nabla_\U g(\U_t^\top \U_t - \V_t^\top \V_t) \right),\\
\V_{t+1} &= \V_t - \eta \left( \nabla_\V f(\U_t \V_t^\top) + \lambda \cdot \nabla_\V g(\U_t^\top \U_t - \V_t^\top \V_t) \right).
\end{align*} The parameter $\lambda > 0$ is arbitrarily chosen.
\item[$(iii)$] Due to this new update rule, a slightly different and proper step size selection should be devised for \algo. Our step size is selected as follows: 
\begin{align}{\label{eq:new_step_size}}
\eta \leq \frac{1}{12 \cdot \max \left\{L,~L_g \right\} \cdot \normt{\begin{bmatrix} \Uinit \\ \Vinit \end{bmatrix}}^2}.
\end{align} Compared to the step size proposed in \citep{bhojanapalli2016dropping} (which is of the same form with \eqref{eq:new_step_size_smooth}), our analysis drops the dependence to $\|\nabla f(\cdot)\|_2$ at the denominator. This is leads to a faster computed step size and highlights the non-necessity of this term for proof of convergence, \emph{i.e.}, the presence of the $\|\nabla f(\cdot)\|_2$ term is sufficient but not necessary.
%\item[$(iv)$] We provide a simpler and improved proof for the case of (restricted) smooth convex $f$, which requires a weaker initial condition than \citep{bhojanapalli2016dropping}.
\end{itemize}

\begin{algorithm}[!t]
	\caption{\algo for smooth and strongly convex $f$}{\label{alg:BFGD}}
	\begin{algorithmic}[1]
		\STATE \textbf{Input:} Function $f$, target rank $r$, \# iterations $T$. 
		\STATE Set initial values for $U_0, V_0$
		\STATE Set step size $\eta$ as in \eqref{eq:new_step_size}.
		\FOR {$t=0$ to $T-1$}
			\STATE $\U_{t+1} = \U_t - \eta \left( \nabla_\U f(\U_t \V_t^\top) - \lambda \cdot \nabla_\U g(\U_t^\top \U_t - \V_t^\top \V_t) \right)$
			\STATE $\V_{t+1} = \V_t - \eta \left( \nabla_\V f(\U_t \V_t^\top) - \lambda \cdot \nabla_\V g(\U_t^\top \U_t - \V_t^\top \V_t) \right)$
		\ENDFOR
		\STATE \textbf{Output:} $\X = \U_T \V_T^\top$. 	
	\end{algorithmic}
\end{algorithm} 
The scheme is described in Algorithm \ref{alg:BFGD}.
%Observe that $\eta$ has similar formula with the step size in \citep{bhojanapalli2016dropping}.
%Though, as we show next, our analysis simplifies further the selection of the step size.\footnote{There is no dependence on the spectral norm of the gradient.}. 
As shown in the next section, constant step size \eqref{eq:new_step_size} is sufficient to lead to attractive convergence rates for \algo, for $f$ $L$-smooth and $\mu$-strongly convex.

\section{Local Convergence for \algo}{\label{sec:guarantees}}
This section includes the main theoretical guarantees of \algo, both for the cases of just smooth $f$, and $f$ being smooth and strongly convex.
% Our results follow and improve upon the results for the square and PSD case from \citep{bhojanapalli2016dropping}. 
 To provide such local convergence results, we assume that there is a known ``good'' initialization which ensures the following.
\begin{center}
\begin{minipage}{.85\textwidth}
\textit{Assumption A1.} \textit{Define $\kappa = \tfrac{\max\{L,L_g\}}{\min\{\mu,\mu_g\}}$ where $\mu_g$ and $L_g$ are the strong convexity and smoothness parameters of $g$, respectively. 
Then, we assume we are provided with a ``good" initialization point $\X_0 = \U_0 \V_0^\top$ such that:} 
\begin{align*}
\dist(\Uinit, \Vinit ; \Xoptr) &\leq \tfrac{\sqrt{2} \cdot \sigma_r(\Xoptr)^{1/2}}{10 \sqrt{\kappa}} \quad \quad \quad \quad \quad \text{(Strongly convex and smooth } f\text{)}.\nonumber \\ 
\text{\textit{For the case where }} f &\text{\textit{ is just smooth, we assume:}} \\
\dist(\Uinit, \Vinit ; \Xoptr) &\leq \tfrac{\sqrt{2} \cdot \sigma_r(\Xoptr)^{1/2}}{10} \quad \quad \quad \quad \quad \text{(Smooth $f$)}.
\end{align*}
\end{minipage}
\end{center}

\medskip
\medskip
For our analysis, we will use the following step size assumptions: 
\begin{align}
\weta &\le \frac{1}{8 \max\{L, L_g\} \cdot \normt{\begin{bmatrix} \U_t \\ \V_t \end{bmatrix}}^2} ~\quad \quad \quad \quad \quad \quad \text{(Strongly convex and smooth } f\text{)}, \label{eq:weta1} \\
\weta &\le \frac{1}{15L \normt{\begin{bmatrix} \U_t \\ \V_t \end{bmatrix}}^2 + 3 \|\gradf(\U_t\V_t^\top)\|_2} ~\quad \quad \quad \text{(Smooth $f$)}.  \label{eq:weta}
\end{align} 
While these step sizes are different than the ones we use in practice, there is a constant-fraction connection between $\weta$ and $\eta$.
\begin{lemma}{\label{lem:equiv_eta}}
Let $(\U_0, \V_0)$ be such that Assumption A1 is satisfied.  Then, \eqref{eq:weta1} holds if \eqref{eq:new_step_size} is satisfied, and \eqref{eq:weta} holds if \eqref{eq:new_step_size_smooth} is satisfied.
\end{lemma}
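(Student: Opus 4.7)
The plan is to show that the practical step sizes $\eta$ in \eqref{eq:new_step_size} and \eqref{eq:new_step_size_smooth} (which depend only on the initial pair $(\U_0,\V_0)$) are dominated by the analysis step sizes $\weta$ in \eqref{eq:weta1} and \eqref{eq:weta} (which depend on the current iterate $(\U_t,\V_t)$), by exploiting the constant-fraction slack between the two. The enabling fact, taken as a black box from the invariance statements used in the main convergence proofs, is that the iterate $(\U_t,\V_t)$ satisfies the same distance bound as $(\U_0,\V_0)$ in Assumption A1. With this in hand, both $\normt{\Wfac_0}$ and $\normt{\Wfac_t}$ are pinned to the same scale as $\normt{\Wofac}$, and the remainder is book-keeping on constants.

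The first step is to establish $\normt{\Wofac}^2 = 2\sigma_1(\Xoptr)$ from the equal-footing parametrization, namely $(\Wofac)^\top \Wofac = (\Uo)^\top \Uo + (\Vo)^\top \Vo = 2 R^\top \So R$ since $\sigma_i(\Uo)=\sigma_i(\Vo)=\sigma_i(\Xoptr)^{1/2}$. Then, for $\tau\in\{0,t\}$, the triangle inequality $\bigl|\normt{\Wfac_\tau} - \normt{\Wofac}\bigr|\le\normf{\Wfac_\tau - \Wofac}\le\dist(\U_\tau,\V_\tau;\Xoptr)$ combined with Assumption A1 (and $\sigma_r(\Xoptr)\le\sigma_1(\Xoptr)$) yields
\begin{equation*}
(1-c_f)\sqrt{2\sigma_1(\Xoptr)} \;\le\; \normt{\Wfac_\tau} \;\le\; (1+c_f)\sqrt{2\sigma_1(\Xoptr)},
\end{equation*}
where $c_f = \tfrac{1}{10\sqrt{\kappa}}$ in the strongly convex case and $c_f = \tfrac{1}{10}$ in the smooth-only case. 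In particular $\normt{\Wfac_t}^2/\normt{\Wfac_0}^2 \le \bigl((1+c_f)/(1-c_f)\bigr)^2 \le (11/9)^2 < 3/2$.

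For the strongly convex and smooth case, this is already enough: the ratio $12/8 = 3/2$ between the denominators in $\eta$ and $\weta$ absorbs the factor-of-$3/2$ discrepancy in $\normt{\Wfac_\cdot}^2$, giving $\tfrac{1}{8\max\{L,L_g\}\normt{\Wfac_t}^2}\ge\tfrac{1}{12\max\{L,L_g\}\normt{\Wfac_0}^2}\ge\eta$, which is \eqref{eq:weta1}. For the smooth-only case I additionally need to compare $\|\gradf(\U_t\V_t^\top)\|_2$ to $\|\gradf(\U_0\V_0^\top)\|_2$. Using $L$-smoothness,
\begin{equation*}
\|\gradf(\U_t\V_t^\top)\|_2 \;\le\; \|\gradf(\U_0\V_0^\top)\|_2 \;+\; L\cdot\|\U_t\V_t^\top - \U_0\V_0^\top\|_F,
\end{equation*}
and bound the last term via the triangle inequality through $\Xoptr$ together with a standard Lipschitz-in-factors estimate $\|\U\V^\top - \Xoptr\|_F \le C\,\sqrt{\sigma_1(\Xoptr)}\cdot\dist(\U,\V;\Xoptr)$, applied at both $\tau=0$ and $\tau=t$. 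Plugging in Assumption A1 shows $L\|\U_t\V_t^\top - \U_0\V_0^\top\|_F = O(L\sigma_1(\Xoptr)) = O(L\normt{\Wfac_0}^2)$, which is absorbed into the $20L-15L=5L$ slack on the quadratic term. Combining these estimates gives $15L\normt{\Wfac_t}^2 + 3\|\gradf(\U_t\V_t^\top)\|_2 \le 20L\normt{\Wfac_0}^2 + 3\|\gradf(\U_0\V_0^\top)\|_2$, i.e.\ \eqref{eq:weta}.

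The main obstacle is the smooth-only case, where one must simultaneously balance the quadratic term $L\normt{\Wfac_t}^2$ and the linear term $\|\gradf(\U_t\V_t^\top)\|_2$ against their initial counterparts, and check that the absorbing constants $20/15$ and $3/3$ are consistent with the specific numerical threshold in Assumption A1. Once the factor-to-product Lipschitz bound is settled, the rest reduces to a careful constant chase using the $(11/9)^2 < 3/2$ inequality established above.
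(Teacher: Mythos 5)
Your proposal follows essentially the same route as the paper's Appendix~A proof: both pin $\normt{\W_t}$ and $\normt{\W_0}$ to $\normt{\Wo}$ via the triangle inequality and Assumption~A1 (invoked at iteration $t$ through the non-increase of the distance, which you correctly flag as an input from the main convergence argument), obtain the ratio $(11/9)^2 < 3/2$ that the $12/8$ slack absorbs in the strongly convex case, and in the smooth-only case compare $\|\gradf(\U_t\V_t^\top)\|_2$ to $\|\gradf(\U_0\V_0^\top)\|_2$ via $L$-smoothness together with a factor-to-product Lipschitz bound through $\Xoptr$. The one caveat---shared with the paper, which also leaves the final arithmetic implicit---is that the closing constant chase in the smooth case is tight and should be written out explicitly to confirm that the $20L/15L$ and $3/3$ budgets actually absorb the $(11/9)^2$ factor and the gradient-drift term.
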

The proof is provided in the Appendix \ref{sec:proof_equiv_eta}. By this lemma, our analysis below is equivalent---up to constants---to that if we were using the original step size $\eta$ of the algorithm. 
However, for clarity reasons and ease of exposition, we use $\weta$ below.

For the case of strongly convex $f$, both Assumption A1 and the step size depends on the strong convexity and smoothness parameters of $g$. 
When $\mu$ and $L$ are known a priori, this dependency can be removed since one can choose $g$ such that at least $\mu$-restricted strongly convex and at most $L$-smooth. 
Then, $\kappa$ becomes the condition number of $f$, and the step size depends only on $L$. 

%Observe that step sizes in \eqref{eq:weta1} and \eqref{eq:weta} are computationally inefficient in practice: 
%they require at most two spectral norm computations of $\U_t, \V_t$ and $\gradf(\U_t \V_t^\top)$ per iteration. However, as the following lemma states, even in the case where we cannot afford such calculations per iteration, there is a 

\subsection{Linear Local Convergence Rate when $f$ is $L$-Smooth and $\mu$-Strongly Convex}
The following theorem proves that, under proper initialization, \algo admits linear convergence rate, when $f$ is both $L$-smooth and $\mu$-restricted strongly convex.
\begin{theorem} \label{thm:strongcvx}
Suppose that $f$ is $L$-smooth and $\mu$-restricted strongly convex and regularizer $g$ is $L_g$-smooth and $\mu_g$-restricted strongly convex.
Define $\mumin := \min \left\{\mu, \mu_g\right\}$ and $L_{\max} := \max \left\{L, L_g \right\}$. 
Denote the unique minimizer of $f$ as $\Xopt \in \R^{m \times n}$ and assume that $\Xopt$ is of arbitrary rank. 
Let $\weta$ be defined as in \eqref{eq:weta1}. 
If the initial point $(\Uinit, \Vinit)$ satisfies Assumption A1, then \algo algorithm in Algorithm \ref{alg:BFGD} converges linearly to $\Xoptr$, within error $O\left( \sqrt{\frac{\kappa}{\sigma_r(\Xoptr)}} \normf{\Xs - \Xoptr} \right)$, according to the following recursion:
\begin{align} \label{eqn:conv_linear}
\dist(\U_{t+1}, \V_{t+1}; \Xoptr)^2 \le \gamma_t \cdot \dist(\U_t, \V_t; \Xoptr)^2 + \weta L \normf{\Xs - \Xoptr}^2,
\end{align}
for every $t \ge 0$, where the contraction parameter $\gamma_t$ satisfies:
\begin{align*}
\gamma_t = 1 - \weta \cdot \tfrac{\mumin \cdot \sigma_r(\Xoptr)}{8}
&\ge1 - \tfrac{\mumin}{17 \cdot L_{\max}} \cdot \tfrac{\sigma_r(\Xoptr)}{\sigma_1(\Xoptr)} > 0.
%&\ge 1 - \frac{1}{128 \kappa} \cdot \frac{\sigma_r(\Xoptr)}{\sigma_1(\Xoptr)} > 0.
\end{align*}
\end{theorem}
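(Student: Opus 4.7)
The plan is to control the one-step squared distance by expanding
\begin{align*}
\|W_{t+1}-W^\star\|_F^2 = \|W_t-W^\star\|_F^2 - 2\widehat{\eta}\,\langle W_t-W^\star,\,Q_t\rangle + \widehat{\eta}^{\,2}\|Q_t\|_F^2,
\end{align*}
where $W_t = [U_t^\top\ V_t^\top]^\top$, $W^\star = [(U^\star)^\top\ (V^\star)^\top]^\top$ is the \emph{balanced} factorization in $\mathcal{X}^\star_r$ realizing $\dist(U_t,V_t;X^\star_r)$ (so that $(U^\star)^\top U^\star = (V^\star)^\top V^\star$), and $Q_t$ stacks the two gradient pieces coming from $f$ and $\lambda g$. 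The objective is to produce a recursion of the form $\text{RHS}\le \gamma_t\|W_t-W^\star\|_F^2 + \widehat{\eta}L\|X^\star-X^\star_r\|_F^2$ with $\gamma_t<1$, then conclude \eqref{eqn:conv_linear}.

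For the linear term, I would split $Q_t$ into its $f$-piece and $g$-piece. The algebraic identity
\begin{align*}
(U_t-U^\star)V_t^\top + U_t(V_t-V^\star)^\top = (U_tV_t^\top - X^\star_r) + \Delta_U\Delta_V^\top
\end{align*}
rewrites the $f$-contribution to the inner product as $\langle\nabla f(X_t),\, X_t-X^\star_r + \Delta_U\Delta_V^\top\rangle$. Restricted strong convexity of $f$ at $X_t$ and $X^\star$, together with $\nabla f(X^\star)=0$, gives a lower bound of order $\mu\|X_t-X^\star_r\|_F^2$, modulo a term controlled by $\|X^\star-X^\star_r\|_F$ via Cauchy--Schwarz and smoothness. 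For the $g$-contribution, symmetry of $\nabla g$ (assumption (ii)) plus balancedness of $W^\star$ collapse the cross-terms so that the inner product reduces to a positive multiple of $\langle \nabla g(M_t),\,M_t\rangle$, where $M_t = U_t^\top U_t - V_t^\top V_t$; since $\nabla g(0)=0$, strong convexity of $g$ yields $\langle \nabla g(M_t), M_t\rangle \ge \mu_g\|M_t\|_F^2$. The key algebraic step that converts these two pieces into a $\sigma_r(X^\star_r)$-scaled descent is the two-sided inequality (analogous to the one used in \cite{tu2016low})
\begin{align*}
\|U_tV_t^\top - X^\star_r\|_F^2 + \tfrac{1}{2}\|M_t\|_F^2 \;\gtrsim\; \sigma_r(X^\star_r)\cdot\|W_t-W^\star\|_F^2,
\end{align*}
valid in a neighborhood of $\mathcal{X}^\star_r$ (guaranteed by Assumption A1).

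For the quadratic term I would bound $\|Q_t\|_F^2$ using smoothness: $\|\nabla f(X_t)\|_F \le L\|X_t-X^\star\|_F$ and $\|\nabla g(M_t)\|_F \le L_g\|M_t\|_F$. Combined with $\|W_t\|_2$ bounds on the factors, this yields $\|Q_t\|_F^2 \lesssim L_{\max}^2\,\|W_t\|_2^2\,\|W_t-W^\star\|_F^2$ plus a residual controlled by $\|X^\star-X^\star_r\|_F$. Choosing $\widehat{\eta}$ as in \eqref{eq:weta1} ensures $\widehat{\eta}^{\,2}\|Q_t\|_F^2$ is at most a small constant fraction of $\widehat{\eta}$ times the descent inner product, so subtracting it still leaves the contraction $\gamma_t = 1 - \widehat{\eta}\,\mumin\sigma_r(X^\star_r)/8$, and the step-size bound \eqref{eq:weta1} gives the stated lower bound on $\gamma_t$ in terms of $\sigma_r(X^\star_r)/\sigma_1(X^\star_r)$.

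The main obstacle, in my view, is the bilinear nature of the factorization: the cross-term $\Delta_U\Delta_V^\top$ appearing in the descent is not sign-definite and needs to be absorbed into the strong-convexity gain on $\|X_t-X^\star_r\|_F^2$. This is precisely where the regularizer $g$ earns its keep, because without controlling $\|M_t\|_F$ (i.e., how far $U_t,V_t$ deviate from balance) the two-sided $\sigma_r$-inequality above fails and the $\Delta_U\Delta_V^\top$ piece can dominate. A secondary but essential step is an induction showing that Assumption A1 is preserved at every iterate: once the distance at step $t$ satisfies the A1 bound, the contractive recursion together with the additive error term keeps the distance at step $t+1$ within the ball of radius $O(\sqrt{\kappa/\sigma_r(X^\star_r)}\,\|X^\star-X^\star_r\|_F)$, which in turn validates the step-size assumption and the local $\sigma_r$-inequality for the next iteration, closing the argument.
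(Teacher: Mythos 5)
Your overall architecture is the same as the paper's: expand the squared distance after one gradient step, split the linear term into the $f$-part and the $g$-part, use (restricted) strong convexity on each, invoke the Tu et al.\ inequality $\normf{WW^\top - \Wo{\Wo}^\top}^2 \gtrsim \sigma_r(\Wo)^2\normf{\Delta_W}^2$ to convert the combined gains $\normf{\U\V^\top - \Xoptr}^2$ and $\normf{\U^\top\U - \V^\top\V}^2$ into a $\sigma_r$-scaled contraction, absorb the bilinear cross terms using Assumption A1, and close with an induction preserving A1. Two remarks, the second of which is a genuine gap.

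First, a minor imprecision: after using symmetry of $\nabla g$ and balancedness of $\Wo$, the $g$-contribution does \emph{not} reduce to a positive multiple of $\ip{\nabla g(M_t)}{M_t}$ alone; it equals $\tfrac{1}{2}\ip{\nabla g}{M_t} + \tfrac{1}{2}\ip{\nabla g}{\Delta_Y^\top \Delta_W}$, and the second (sign-indefinite) cross term must be absorbed via A1 and AM--GM exactly like the $\ip{\gradf}{\Delta_U\Delta_V^\top}$ term. This is patchable with the machinery you already have.

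The real gap is in your treatment of the quadratic term $\weta^{\,2}\normf{Q_t}^2$. Bounding $\normf{\gradf(X_t)} \le L\normf{X_t - \Xs}$ and then comparing $\weta^{\,2}\normf{Q_t}^2$ against the descent gain does not close at the step size \eqref{eq:weta1}: with $\weta \sim \bigl(L_{\max}\normt{W_t}^2\bigr)^{-1}$ one gets $\weta^{\,2}\normf{Q_t}^2 \sim \weta\, L\,\normt{W_t}^2\normf{\Delta_W}^2$, whereas the strong-convexity gain is only $\sim \weta\, \mumin\,\sigma_r(\Wo)^2\normf{\Delta_W}^2$; the ratio is $\kappa\,\sigma_1(\Xoptr)/\sigma_r(\Xoptr) \ge 1$, so the quadratic term is \emph{not} a small fraction of the linear one. (This would only be rescued by shrinking $\weta$ by an extra factor of $\kappa^{-1}\sigma_r/\sigma_1$, which changes both the step size and the contraction rate claimed in the theorem.) The paper avoids this by \emph{not} discarding the gradient norm when lower-bounding the inner product: the co-coercivity inequality
\begin{align*}
f(\Y) \ge f(\X) + \ip{\gradf(\X)}{\Y - \X} + \tfrac{1}{2L}\normf{\gradf(\X) - \gradf(\Y)}^2
\end{align*}
is used so that the descent lemma retains explicit surpluses $\tfrac{1}{4L_{\max}}\normf{\gradf(\factor)}^2$ and $\tfrac{1}{16L_{\max}}\normf{\nabla g}^2$, and the quadratic term, bounded as $\weta^{\,2}\normf{Q_t}^2 \le \weta^{\,2}\bigl(4\normf{\gradf}^2 + \tfrac{1}{2}\normf{\nabla g}^2\bigr)\normt{W_t}^2$, is cancelled against exactly those surpluses via the choice of $\weta$. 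Without carrying these gradient-norm terms through the linear bound, your accounting cannot recover the stated $\gamma_t = 1 - \weta\,\mumin\sigma_r(\Xoptr)/8$.
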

The proof is provided in Section \ref{sec:proof_linear}. 
The theorem states that if $\Xo$ is (approximately) low-rank, the iterates converge to a close neighborhood of $\Xoptr$.

The above result can also be expressed w.r.t. the function value $f(\factor)$, as follows:
\begin{corollary} \label{cor:strongcvx}
Under the same initial condition with Theorem \ref{thm:strongcvx}, Algorithm \eqref{alg:BFGD} satisfies the following recursion w.r.t. the distance of function values from the optimal point:
$$
f(\U_t \V_t^\top) - f(\Xopt) \le \gamma^t \cdot \sigma_1(\Xopt) \cdot \left( f(\Uinit \Vinit^\top) - f(\Xopt) \right) + \frac{\sqrt{\mu L}}{\sigma_r(\Xopt)} \normf{\Xopt - \Xoptr}^2.
$$
\end{corollary}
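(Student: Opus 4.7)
The plan is to translate the distance-based linear convergence from Theorem \ref{thm:strongcvx} into a function-value guarantee by sandwiching the suboptimality $f(\U_t \V_t^\top) - f(\Xopt)$ between matrix-space quantities, and then relating those to the factor-space distance $\dist(\U_t,\V_t;\Xoptr)$ via standard matrix-factorization inequalities. The overall pipeline is: upper bound the function gap by $L$-smoothness, upper bound the resulting Frobenius error by $\sigma_1(\Xoptr)\cdot \dist^2$, apply Theorem \ref{thm:strongcvx} for linear contraction, and finally convert the initial $\dist^2$ back into $f(\Uinit\Vinit^\top) - f(\Xopt)$ via $\mu$-strong convexity paired with a matching lower bound involving $\sigma_r(\Xoptr)$.

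Concretely, first I would use $L$-smoothness of $f$ together with $\gradf(\Xopt) = 0$ (by optimality of $\Xopt$) to obtain
\[
f(\U_t\V_t^\top) - f(\Xopt) \le \tfrac{L}{2}\normf{\U_t\V_t^\top - \Xopt}^2 \le L\normf{\U_t\V_t^\top - \Xoptr}^2 + L\normf{\Xoptr - \Xopt}^2.
\]
Writing $\U_t\V_t^\top - \Uo (\Vo)^\top = (\U_t-\Uo)\V_t^\top + \Uo(\V_t - \Vo)^\top$ for the closest balanced pair $(\Uo,\Vo) \in \mathcal{X}^\star_r$, and bounding $\|\V_t\|_2,\|\Uo\|_2 \lesssim \sigma_1(\Xoptr)^{1/2}$ (a consequence of Assumption A1 and the equally-footed definition \eqref{prelim:eq_footing}), I get $\normf{\U_t\V_t^\top - \Xoptr}^2 \lesssim \sigma_1(\Xoptr) \cdot \dist(\U_t,\V_t;\Xoptr)^2$. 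Unrolling the recursion \eqref{eqn:conv_linear} then yields
\[
\dist(\U_t,\V_t;\Xoptr)^2 \le \gamma^t\, \dist(\Uinit,\Vinit;\Xoptr)^2 + \tfrac{\weta L}{1-\gamma}\normf{\Xopt - \Xoptr}^2,
\]
so the remaining task is to upper bound $\dist(\Uinit,\Vinit;\Xoptr)^2$ in terms of $f(\Uinit\Vinit^\top) - f(\Xopt)$.

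For that, I would use $\mu$-strong convexity at the stationary point $\Xopt$, namely $f(\Uinit\Vinit^\top) - f(\Xopt) \ge \tfrac{\mu}{2}\normf{\Uinit\Vinit^\top - \Xopt}^2$, and split via triangle inequality to get $\normf{\Uinit\Vinit^\top - \Xoptr}^2 \lesssim \mu^{-1}(f(\Uinit\Vinit^\top) - f(\Xopt)) + \normf{\Xopt - \Xoptr}^2$. Then I would invoke the local lower bound $\normf{\U\V^\top - \Xoptr}^2 \gtrsim \sigma_r(\Xoptr) \cdot \dist(\U,\V;\Xoptr)^2$, valid for $(\U,\V)$ lying in the small neighborhood of $\mathcal{X}^\star_r$ permitted by Assumption A1, which gives
\[
\dist(\Uinit,\Vinit;\Xoptr)^2 \lesssim \tfrac{f(\Uinit\Vinit^\top) - f(\Xopt)}{\mu\, \sigma_r(\Xoptr)} + \tfrac{\normf{\Xopt - \Xoptr}^2}{\sigma_r(\Xoptr)}.
\]
Chaining the three displays produces a bound of the shape $f(\U_t\V_t^\top) - f(\Xopt) \lesssim \tfrac{L\, \sigma_1(\Xoptr)}{\mu\, \sigma_r(\Xoptr)} \gamma^t \left(f(\Uinit\Vinit^\top) - f(\Xopt)\right) + \tfrac{C}{\sigma_r(\Xoptr)} \normf{\Xopt - \Xoptr}^2$, where the residual coefficient $C$ collects the $\mu,L$-dependent constants coming from both smoothness and the distance-recursion error term (which the statement writes compactly as $\sqrt{\mu L}$).

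The main obstacle is the local lower bound $\normf{\U\V^\top - \Xoptr}^2 \gtrsim \sigma_r(\Xoptr) \dist(\U,\V;\Xoptr)^2$. It cannot hold globally on the factor manifold because the rescaling $(\U,\V) \mapsto (\U Q,\, \V Q^{-\top})$ preserves $\U\V^\top$ but can inflate the factor distance arbitrarily; it becomes available only once $(\U,\V)$ sits near a balanced pair, which is exactly what Assumption A1 and the regularizer $g$ in \algo buy us. The quantitative form of this inequality, combined with the symmetric splitting used to control $(\U_t-\Uo)\V_t^\top + \Uo(\V_t - \Vo)^\top$, is what the careful part of the argument has to establish. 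Everything else is linearization at the stationary point $\Xopt$, triangle inequalities, and the already-proved distance recursion of Theorem \ref{thm:strongcvx}.
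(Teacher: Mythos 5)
The paper itself contains no proof of this corollary, so there is nothing to compare against line by line; your pipeline (smoothness at $\Xopt$ $\to$ product-space error $\to$ factor distance $\to$ Theorem \ref{thm:strongcvx} $\to$ back to the initial function gap) is the natural route and is surely what the authors intended, modulo the fact that the constants you obtain ($L\sigma_1/(\mu\sigma_r)$ on the leading term) do not literally match the loosely stated $\sigma_1(\Xopt)$ and $\sqrt{\mu L}$ in the corollary.

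There is, however, a genuine gap at the step you yourself flag as the crux. The local lower bound $\normf{\U\V^\top - \Xoptr}_F^2 \gtrsim \sigma_r(\Xoptr)\,\dist(\U,\V;\Xoptr)^2$ is false even in an arbitrarily small neighborhood of $\mathcal{X}^\star_r$, and Assumption A1 does not rescue it: take $\Uinit = \Uo G$, $\Vinit = \Vo G^{-\top}$ with $G$ invertible, non-orthogonal, and as close to $I$ as you like. Then $\Uinit\Vinit^\top = \Xoptr$ exactly (so the left side is zero, and if $\Xopt=\Xoptr$ the initial function gap is zero too), A1 holds, yet $\dist(\Uinit,\Vinit;\Xoptr)>0$ because equality would force $G$ to be orthogonal. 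So your inequality $\dist_0^2 \lesssim \tfrac{1}{\mu\sigma_r}(f_0-f^\star) + \tfrac{1}{\sigma_r}\normf{\Xopt-\Xoptr}^2$ cannot hold under A1 alone, and the chain breaks exactly where you convert $\dist_0^2$ into the initial suboptimality. The regularizer $g$ cannot help here either, since at $t=0$ it has not yet acted. The correct version of the inequality is the lifted one: combining $\normf{W_0W_0^\top - \Wo{\Wo}^\top}^2 \ge \tfrac{4}{5}\sigma_r(\Wo)^2\dist_0^2$ (the Tu et al.\ lemma quoted in Appendix B.1) with the identity used there, one gets
\begin{align*}
\tfrac{4}{5}\,\sigma_r(\Wo)^2\,\dist_0^2 \;\le\; 4\,\normf{\Uinit\Vinit^\top - \Xoptr}^2 \;+\; \normf{\Uinit^\top\Uinit - \Vinit^\top\Vinit}^2 ,
\end{align*}
so an extra imbalance term appears on the right. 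To close the argument you must either assume a balanced initialization, for which $\Uinit^\top\Uinit = \Vinit^\top\Vinit$ (this is what the paper's suggested initialization $\Uinit = A_0\Sigma_0^{1/2}$, $\Vinit = B_0\Sigma_0^{1/2}$ delivers, making the extra term vanish), or run the whole corollary for the regularized objective $f + \tfrac{1}{2}g$ so that the imbalance term is itself dominated by the (regularized) initial suboptimality. As written, your proof establishes the corollary only for balanced initial points, not for all initializations satisfying Assumption A1.
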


\subsection{Local Sublinear Convergence}
In Section \ref{sec:bfgd_algo_smooth}, we showed that a lifting technique can reduce our problem \eqref{intro:eq_01} to a rank-constrained semidefinite program, and applying \fgd from \citep{bhojanapalli2016dropping} is exactly \algo \eqref{eqn:nonconvex_gd}. 
While the sublinear convergence guarantee of \fgd can also be applied to our problem, we provide an improved result.
\begin{theorem} \label{thm:smooth}
Suppose that $\f$ is $L$-smooth with a minimizer $\Xs \in \R^{m \times n}$. 
Let $\Xh_r$ be any target rank-$r$ matrix, and let $\weta$ be defined as in \eqref{eq:weta}. 
If the initial point $\X_0 = \U_0\V_0^\top$, $\U_0 \in \R^{m \times r}$ and $\V_0 \in \R^{n \times r}$, satisfies Assumption A1, then \fgd converges with rate $O(1/T)$ to a tolerance value according to:
\begin{align*}
f(U_T V_T^\top) - f(\Uo {\Vo}^\top) = \hat{f}(W_T W_T^\top) - \hat{f}(\Wo {\Wo}^\top) \le \frac{10 \cdot \dist(\Uinit,\Vinit;\Xoptr)^2}{\eta T}
\end{align*}
\end{theorem}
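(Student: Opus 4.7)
The plan is to reduce the statement to a sublinear convergence analysis for \fgd applied to the lifted objective $\fh$, and then sharpen one key estimate so that the initial condition loses the $\sqrt{\kappa}$ factor required in \citep{bhojanapalli2016dropping} and the leading constant becomes $10$.

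\textbf{Step 1 (reduction).} Stack the iterates as $\W_t = \begin{bmatrix}\U_t\\ \V_t\end{bmatrix} \in \R^{(m+n)\times r}$. By Remark 3.1, the \algo recursion \eqref{eqn:nonconvex_gd} coincides with the \fgd recursion $\W_{t+1} = \W_t - \eta\, \nabla_\W \fh(\W_t \W_t^\top)$. By Proposition 3.2, $\fh$ is convex and $\tfrac{L}{2}$-smooth, and by construction $\fh(\W\W^\top) = f(\U\V^\top)$. Choosing $\Wo = \begin{bmatrix}\Uo\\ \Vo\end{bmatrix}$ for any equally-footed factorization $(\Uo,\Vo)\in\mathcal{X}^\star_r$ of $\Xoptr$, the statement reduces to establishing
\[
\fh(\W_T\W_T^\top) - \fh(\Wo\Wo^\top) \le \frac{10 \cdot \dist(\Uinit,\Vinit;\Xoptr)^2}{\eta\, T}.
\]

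\textbf{Step 2 (descent plus convexity telescoping).} I would run the standard non-convex gradient-descent analysis in the lifted factor space. Two ingredients are needed: first, a per-step descent estimate
\[
\fh(\W_{t+1}\W_{t+1}^\top) \le \fh(\W_t\W_t^\top) - c_1 \weta \cdot \normf{\nabla\fh(\W_t\W_t^\top)\,\W_t}^2,
\]
coming from $\tfrac{L}{2}$-smoothness of $\fh$, the chain rule for $\W\mapsto \W\W^\top$, and the step size \eqref{eq:weta}, which by Lemma 4.1 is a universal multiple of the algorithmic $\eta$; and second, a convexity-based suboptimality bound
\[
\fh(\W_t\W_t^\top) - \fh(\Wo\Wo^\top) \le c_2 \cdot \normf{\nabla\fh(\W_t\W_t^\top)\,\W_t} \cdot \dist(\U_t,\V_t;\Xoptr),
\]
obtained by writing $\W_t\W_t^\top - \Wo\Wo^\top = 2\W_t(\W_t - \Wo R_t)^\top - (\W_t-\Wo R_t)(\W_t-\Wo R_t)^\top$ for the minimizing rotation $R_t$, using the symmetry of $\nabla\fh$ evaluated at $\W_t\W_t^\top$, and absorbing the quadratic remainder into the step-size slack. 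Chaining the two produces the one-step recursion $\Delta_{t+1} \le \Delta_t - c_3\weta\,\Delta_t^2 / \dist(\U_t,\V_t;\Xoptr)^2$ for the gap $\Delta_t := \fh(\W_t\W_t^\top) - \fh(\Wo\Wo^\top)$, which telescopes to the desired $O(1/T)$ rate with constant proportional to $\dist(\Uinit,\Vinit;\Xoptr)^2 / \eta$.

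\textbf{Step 3 (invariance of the neighborhood --- the main obstacle).} The technical difficulty is to show that the factored distance $\dist(\U_t,\V_t;\Xoptr)$ does not grow along the iteration, so that the constant in the $O(1/T)$ bound is governed by the initial distance alone. In \citep{bhojanapalli2016dropping} the analogous invariance is enforced by a stricter initial condition carrying a $\sqrt{\kappa}$ factor, used only to absorb slack constants in the descent estimate. Here I would tighten that step by keeping the argument entirely at the level of $\fh$-values---rather than establishing a two-sided factored contraction---and by exploiting the $\|\gradf(\U_t\V_t^\top)\|_2$ term already present in the step size \eqref{eq:weta}. These two refinements let me show that the set $\{(\U,\V) : \dist(\U,\V;\Xoptr) \le \tfrac{\sqrt{2}\,\sigma_r(\Xoptr)^{1/2}}{10}\}$ is forward-invariant under the iteration, which yields both the weaker form of Assumption~A1 (no $\sqrt{\kappa}$ dependence) and the clean constant $10$ in the final bound.
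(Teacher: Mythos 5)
Your plan matches the paper's proof essentially step for step: lift to $\hat{f}$, prove a sufficient-decrease inequality and a suboptimality bound of the form $\hat{f}(W_tW_t^\top)-\hat{f}(W^\star W^{\star\top})\le c\,\|\nabla_W\hat{f}\|_F\cdot\dist(\U_t,\V_t;\Xoptr)$ via the decomposition $W W^\top - W^\star W^{\star\top} = \Delta_W W^\top + W\Delta_W^\top - \Delta_W\Delta_W^\top$, telescope the resulting recursion $1/\delta_{t+1}-1/\delta_t\ge \alpha/\beta^2$, and separately prove that the factored distance is non-increasing while the function gap stays nonnegative so the neighborhood condition is forward-invariant. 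The only small inaccuracy is that the quadratic remainder $\langle\nabla f,\Delta_W\Delta_W^\top\rangle$ in the suboptimality bound is controlled by the neighborhood condition $\dist\le\sigma_r(W^\star)/10$ (via column-space projections), not by step-size slack, but this does not change the argument.
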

Theorem \ref{thm:smooth} guarantees a local sublinear convergence with a looser initial condition. 
While \citep{bhojanapalli2016dropping} requires $\min_{R \in O(r)} \normf{W - \Wo R} \le \frac{\sigma_r^2(\Wo)}{100\sigma_1^2(\Wo)} \cdot \sigma_r(\Wo)$, our result requires that the initial distance to the $\Wo$ is merely a constant factor of $\sigma_r(\Wo)$.

\section{Initialization}{\label{sec:init}}

In this section, we present initialization procedures for the case where $f$ is strongly convex and smooth.
Our main theorem guarantees linear convergence in the factored space given that the initial point $(\Uinit,\Vinit)$ is within a ball around the closest target factors, with radius $O(\kappa^{-1/2} \sigma_r(\Xoptr)^{1/2})$. To find such a solution, we propose an extension of the initialization in \citep{bhojanapalli2016dropping}.

\begin{lemma} \label{lem:initialize}
Consider an initial solution $\factorinit$ which is the best rank-$r$ approximation of 
\begin{align} \label{eqn:initial_sol}
\Xinit = -\tfrac{1}{L}\gradf(0)
\end{align}
Then we have
\begin{align*}
\normf{\factorinit - \Xoptr} &\le 2\sqrt{2\left(1 - \tfrac{1}{\kappa}\right)} \normf{\Xopt} + 2\normf{\Xopt - \Xoptr}
\end{align*}
\end{lemma}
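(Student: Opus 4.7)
The plan is to view $\Xinit = -\tfrac{1}{L}\gradf(0)$ as a single gradient-descent step from the origin with step size $1/L$, bound its distance to $\Xopt$ by a contraction factor, and then handle the rank-$r$ truncation by using that $\factorinit$ is the best rank-$r$ approximation of $\Xinit$.

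\textbf{Step 1: one-step contraction to $\Xopt$.} Since $\Xopt$ is the global minimizer, $\gradf(\Xopt) = 0$, so I can expand
\begin{align*}
\normf{\Xinit - \Xopt}^2 = \normf{\Xopt}^2 + \tfrac{2}{L}\ip{\gradf(0)}{\Xopt} + \tfrac{1}{L^2}\normf{\gradf(0)}^2.
\end{align*}
The joint $\mu$-strong-convexity / $L$-smoothness co-coercivity inequality applied to the pair $(0,\Xopt)$, together with $\gradf(\Xopt)=0$, gives
\begin{align*}
-\ip{\gradf(0)}{\Xopt} \ge \tfrac{\mu L}{\mu+L}\normf{\Xopt}^2 + \tfrac{1}{\mu+L}\normf{\gradf(0)}^2.
\end{align*}
Plugging this in, the $\normf{\gradf(0)}^2$ coefficient becomes $\tfrac{1}{L^2}-\tfrac{2}{L(\mu+L)} = \tfrac{\mu - L}{L^2(\mu+L)} \le 0$, and the $\normf{\Xopt}^2$ coefficient collapses to $\tfrac{L-\mu}{L+\mu} \le 1 - 1/\kappa$. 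Hence
\begin{align*}
\normf{\Xinit - \Xopt}^2 \le (1 - 1/\kappa)\normf{\Xopt}^2 \le 2(1 - 1/\kappa)\normf{\Xopt}^2.
\end{align*}

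\textbf{Step 2: pass to the best rank-$r$ approximation.} Since $\factorinit$ is the best rank-$r$ approximation of $\Xinit$ and $\Xoptr$ is itself rank $r$, we have $\normf{\factorinit - \Xinit} \le \normf{\Xoptr - \Xinit}$. Combining this with two applications of the triangle inequality,
\begin{align*}
\normf{\factorinit - \Xoptr}
&\le \normf{\factorinit - \Xinit} + \normf{\Xinit - \Xoptr}
\le 2\normf{\Xinit - \Xoptr} \\
&\le 2\normf{\Xinit - \Xopt} + 2\normf{\Xopt - \Xoptr}.
\end{align*}
Substituting the Step 1 bound $\normf{\Xinit - \Xopt} \le \sqrt{2(1-1/\kappa)}\normf{\Xopt}$ yields exactly the inequality claimed in the lemma.

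\textbf{Main obstacle.} The only non-routine part is Step 1: the naive bound $\normf{\Xinit - \Xopt} \le 2\normf{\Xopt}$ (from smoothness and the triangle inequality) is too weak, and I need a genuine contraction factor strictly less than $1$. Deriving this cleanly relies on the joint co-coercivity inequality from $\mu$-strong convexity combined with $L$-smoothness; once that is in place, the sign of the $\normf{\gradf(0)}^2$ cross term works in my favor and the remaining steps are mechanical. The rank-$r$ truncation argument and the two triangle inequalities in Step 2 are entirely routine.
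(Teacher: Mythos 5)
Your proof is correct, and while it shares the paper's overall skeleton---first reduce to $\normf{\factorinit - \Xoptr} \le 2\normf{\Xinit - \Xopt} + 2\normf{\Xopt - \Xoptr}$, then bound the one-step distance $\normf{\Xinit - \Xopt}$---both of your key sub-arguments differ from the paper's. For the truncation step, the paper bounds $\normf{\Xinit - \factorinit}$ via the tail singular values of $\Xinit$ and Mirsky's perturbation theorem, whereas you invoke the Eckart--Young optimality of $\factorinit$ directly, i.e.\ $\normf{\factorinit - \Xinit} \le \normf{\Xoptr - \Xinit}$ since $\Xoptr$ has rank at most $r$; your route is more elementary and reaches the same factor of $2$. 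For the contraction step, the paper expands $\normf{\Xinit - \Xopt}^2 = \normf{\Xinit}^2 + \normf{\Xopt}^2 - 2\ip{\Xinit}{\Xopt}$ and separately bounds $\normf{\Xinit} \le \normf{\Xopt}$ (smoothness) and $L\ip{\Xinit}{\Xopt} \ge \mu \normf{\Xopt}^2$ (strong convexity), arriving at the factor $2\left(1 - \tfrac{\mu}{L}\right)$; you instead use the joint co-coercivity inequality, which yields the tighter factor $\tfrac{L-\mu}{L+\mu} \le 1 - \tfrac{1}{\kappa}$ and lets you absorb the $\normf{\gradf(0)}^2$ term because its coefficient is nonpositive. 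Your bound is therefore a factor $\sqrt{2}$ stronger before you deliberately relax it to match the stated constant; the paper's argument, by contrast, uses only the separate definitions of smoothness and strong convexity rather than the combined co-coercivity lemma. Both approaches are sound, and the constant in the lemma is loose enough to accommodate either.
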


Combined with Lemma 5.14 in \citep{tu2016low}, which transforms a good initial solution from the original space to the factored space, the following corollary gives one sufficient condition for global convergence of \algo with the SVD of \eqref{eqn:initial_sol} as initialization.
\begin{corollary}\label{cor:init}
If
\begin{align*}
\normf{\Xopt - \Xoptr} \le \frac{\sigma_r(\Xopt)}{100\sqrt{\kappa}}, \quad
\kappa \le 1 + \frac{\sigma_r(\Xoptr)^2}{4608 \cdot \normf{\Xoptr}^2}
\end{align*}
then the initial solution 
$$
\Uinit = A_0 \Sigma_0^{1/2} ,~ \Vinit = B_0 \Sigma_0^{1/2}
$$
where $A_0 \Sigma_0 B_0$ is the SVD of $-\frac{1}{L} \gradf(0)$ satisfies the initial condition of Theorem \ref{thm:strongcvx}.
\end{corollary}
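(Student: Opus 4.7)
The plan is to combine Lemma \ref{lem:initialize}, which bounds $\normf{\factorinit - \Xoptr}$ in the ambient (matrix) space, with Lemma 5.14 of \citep{tu2016low}, which translates such an ambient-space bound into the factored-space distance $\dist(\Uinit,\Vinit;\Xoptr)$ appearing in Assumption A1. The two hypotheses in the corollary are calibrated exactly so that, after chaining the two lemmas, the resulting factored-space bound lies inside the radius $\tfrac{\sqrt{2}\,\sigma_r(\Xoptr)^{1/2}}{10\sqrt{\kappa}}$ required by Theorem \ref{thm:strongcvx}.

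First I would invoke Lemma \ref{lem:initialize} to obtain
\[
\normf{\factorinit - \Xoptr} \;\le\; 2\sqrt{2(1-1/\kappa)}\,\normf{\Xopt} + 2\normf{\Xopt - \Xoptr}.
\]
The hypothesis $\kappa \le 1 + \sigma_r(\Xoptr)^2/(4608\,\normf{\Xoptr}^2)$ yields $1 - 1/\kappa \le \sigma_r(\Xoptr)^2/(4608\,\normf{\Xoptr}^2)$, and since $\sqrt{4608} = 48\sqrt{2}$ the first term simplifies to $2\sqrt{2(1-1/\kappa)}\,\normf{\Xopt} \le \tfrac{\sigma_r(\Xoptr)}{24}\cdot\tfrac{\normf{\Xopt}}{\normf{\Xoptr}}$. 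Because $X^\star_r$ is the best rank-$r$ approximation of $\Xopt$, one has the orthogonal decomposition $\normf{\Xopt}^2 = \normf{\Xoptr}^2 + \normf{\Xopt - \Xoptr}^2$ and $\sigma_r(\Xopt) = \sigma_r(\Xoptr)$ (assuming $\rank(\Xopt)\ge r$). The second hypothesis $\normf{\Xopt - \Xoptr}\le \sigma_r(\Xopt)/(100\sqrt{\kappa})$ therefore forces $\normf{\Xopt}/\normf{\Xoptr}$ to be essentially $1$, while the second term of the Lemma \ref{lem:initialize} bound is at most $\sigma_r(\Xoptr)/(50\sqrt{\kappa})$. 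Combining, and using $\kappa\le 1+1/4608$ to absorb the constant-only piece into a $1/\sqrt{\kappa}$ factor, I obtain a bound of the form $\normf{\factorinit - \Xoptr} \le C\,\sigma_r(\Xoptr)/\sqrt{\kappa}$ with an explicit small constant $C$.

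Next I would apply Lemma 5.14 of \citep{tu2016low}, which for any two rank-$r$ matrices (here $\factorinit$ and $\Xoptr$) and their equally-footed factorizations gives
\[
\dist(\Uinit,\Vinit;\Xoptr)^2 \;\le\; \frac{2}{\sqrt{2}-1}\cdot\frac{\normf{\factorinit - \Xoptr}^2}{\sigma_r(\Xoptr)}.
\]
Substituting the bound from the previous step and tracking constants shows the right-hand side is at most $\tfrac{2\,\sigma_r(\Xoptr)}{100\,\kappa}$, i.e.\ $\dist(\Uinit,\Vinit;\Xoptr) \le \tfrac{\sqrt{2}\,\sigma_r(\Xoptr)^{1/2}}{10\sqrt{\kappa}}$, which is exactly the initial condition of Theorem \ref{thm:strongcvx}.

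The main obstacle is not conceptual but arithmetic: one has to verify that the specific constants $4608$ and $100$ in the hypotheses are large enough that, after being inflated by the factor $\sqrt{8}$ from Lemma \ref{lem:initialize} and then by $\tfrac{2}{\sqrt{2}-1}$ from Lemma 5.14, the resulting radius still fits within the $\sqrt{2}/10$ tolerance of Assumption A1. The key algebraic observations that keep the bookkeeping clean are the Pythagorean identity $\normf{\Xopt}^2 = \normf{\Xoptr}^2 + \normf{\Xopt - \Xoptr}^2$, the equality $\sigma_r(\Xopt) = \sigma_r(\Xoptr)$, and the fact that under the $\kappa$-hypothesis $\sqrt{\kappa}$ itself differs from $1$ by at most $O(1/\sqrt{4608})$, so the ``constant-only'' contribution from Lemma \ref{lem:initialize} can still be written as an $O(\sigma_r(\Xoptr)/\sqrt{\kappa})$ term.
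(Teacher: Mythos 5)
Your proposal is correct and follows exactly the route the paper intends: the paper itself only sketches this corollary by saying it follows from combining Lemma \ref{lem:initialize} with Lemma 5.14 of \citep{tu2016low}, and your chaining of the two bounds, together with the constant-tracking (the $1/24$ factor from $\sqrt{4608}=48\sqrt{2}$, the $1/(50\sqrt{\kappa})$ from the residual hypothesis, and the final check that $\tfrac{2}{\sqrt{2}-1}(0.062)^2 < \tfrac{2}{100}$), verifies the claim in more detail than the paper provides. No gaps.
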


Corollary \ref{cor:init} requires weaker conditions than \citep{bhojanapalli2016dropping} in order Theorem \ref{thm:strongcvx} be transformed to \emph{global guarantees}.
While our theoretical results can only guarantee global convergence for a well-conditioned problem ($\kappa$ close to one), we show in the experiments that the algorithm performs well in practice where the sufficient conditions are yet to be satisfied.

\section{Experiments}{\label{sec:exp}}

In this section, we first provide comparison results regarding the actual computational complexity of SVD and matrix-matrix multiplication procedures; 
while such comparison is not thoroughly complete, it provides some evidence about the gains of optimizing over $U, V$ factors, in lieu of SVD-based rank-$r$ approximations.
We also describe a toy example that highlights the effect of the regularizer $g$ in convergence rates, for strongly convex and smooth $f$.
Next, we provide extensive results on the performance of \algo, as compared with state of the art, for the following problem settings:
$(i)$ affine rank minimization, where the objective is smooth and (restricted) strongly convex,
$(ii)$ image denoising/recovery from a limited set of observed pixels, where the problem can be cast as a matrix completion problem, and
$(iii)$ 1-bit matrix completion, where the objective is just smooth convex. 
In all cases, the task is to recover a low rank matrix from a set of observations, where our machinery naturally applies.

\subsection{The Complexity of SVD and Matrix-Matrix Multiplication in Practice}{\label{sec:svd_mm}}
To provide an idea of how matrix-matrix multiplication scales, in comparison with truncated SVD,\footnote{Here, we consider algorithmic solutions where both SVD and matrix-matrix multiplication computations are performed with high-accuracy. One might consider \emph{approximate} SVD---see the excellent monograph \citep{halko2011finding}---and matrix-matrix multiplication approximations---see \citep{drineas2001fast, drineas2006fast, kyrillidis2014approximate, cohen2015optimal}; we believe that studying such alternatives is an interesting direction to follow for future work.}
we compare it with some state-of-the-art SVD subroutines: 
$(i)$ the Matlab's \texttt{svds} subroutine, based on \texttt{ARPACK} software package \citep{lehoucq1998arpack}, 
$(ii)$ a collection of implicitly-restarted Lanczos methods for fast truncated SVD and symmetric eigenvalue decompositions  (\texttt{irlba}, \texttt{irlbablk}, \texttt{irblsvds}) \citep{baglama2005augmented} \footnote{IRLBA stands for Implicitly Restarted Lanczos Bidiagonalization Algorithms.}, 
$(iii)$ the limited memory block Krylov subspace optimization 
for computing dominant SVDs (\texttt{LMSVD}) \citep{liu2013limited},
and $(iv)$ the \texttt{PROPACK} software package \citep{larsen2004propack}.
We consider random realizations of matrices in $\R^{m \times n}$ (w.l.o.g., assume $m = n$), for varying values of $m$. 
For SVD computations, we look for the best rank-$r$ approximation, for varying values of $r$.
In the case of matrix-matrix multiplication, we record the time required for the computation of 2 matrix-matrix multiplications of matrices $\R^{m \times m}$ and $\R^{m \times r}$, which is equivalent with the computational complexity required in our scheme, in order to avoid SVD calculations.
All experiments are performed in a Matlab environment.

Figure \ref{fig:svd_mm_comp} (left panel) shows execution time results for the algorithms under comparison, as a function of the dimension $m$. 
Rank $r$ is fixed to $r = 100$. 
While both SVD and matrix multiplication procedures are known to have $O(m^2 r)$ complexity, it is obvious that the latter on dense matrices is at least two-orders of magnitude faster than the former. 
In Table \ref{tbl:svd_mm_comp}, we also report the approximation guarantees of some faster SVD subroutines, as compared to \texttt{svds}: while \texttt{irblablk} seems to be faster, it returns a very rough approximation of the singular values, when $r$ is relatively large. 
Similar findings are depicted in Figures \ref{fig:svd_mm_comp} (middle and right panel).

\begin{figure*}
\centering
\includegraphics[width=0.325\textwidth]{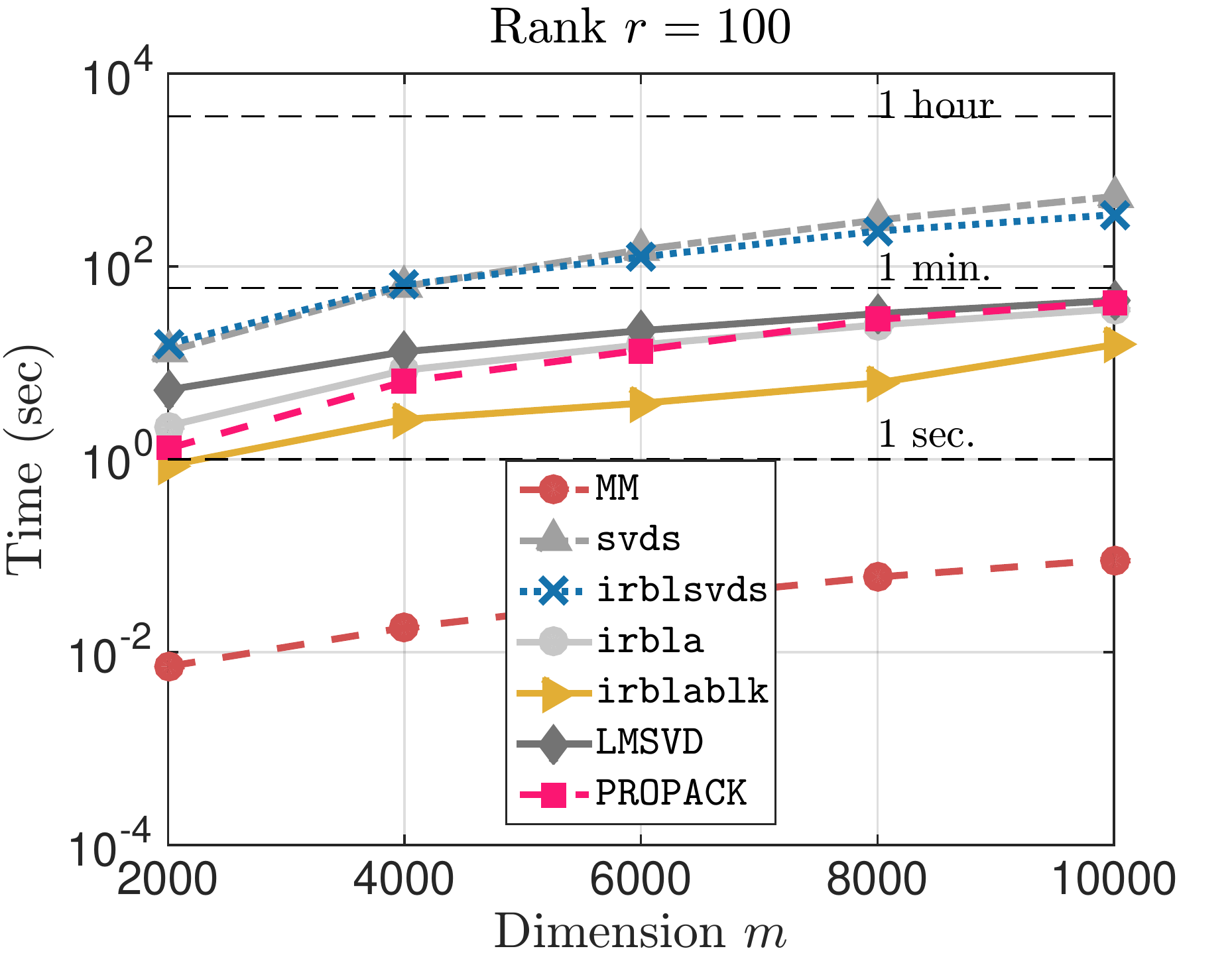}
\includegraphics[width=0.31\textwidth]{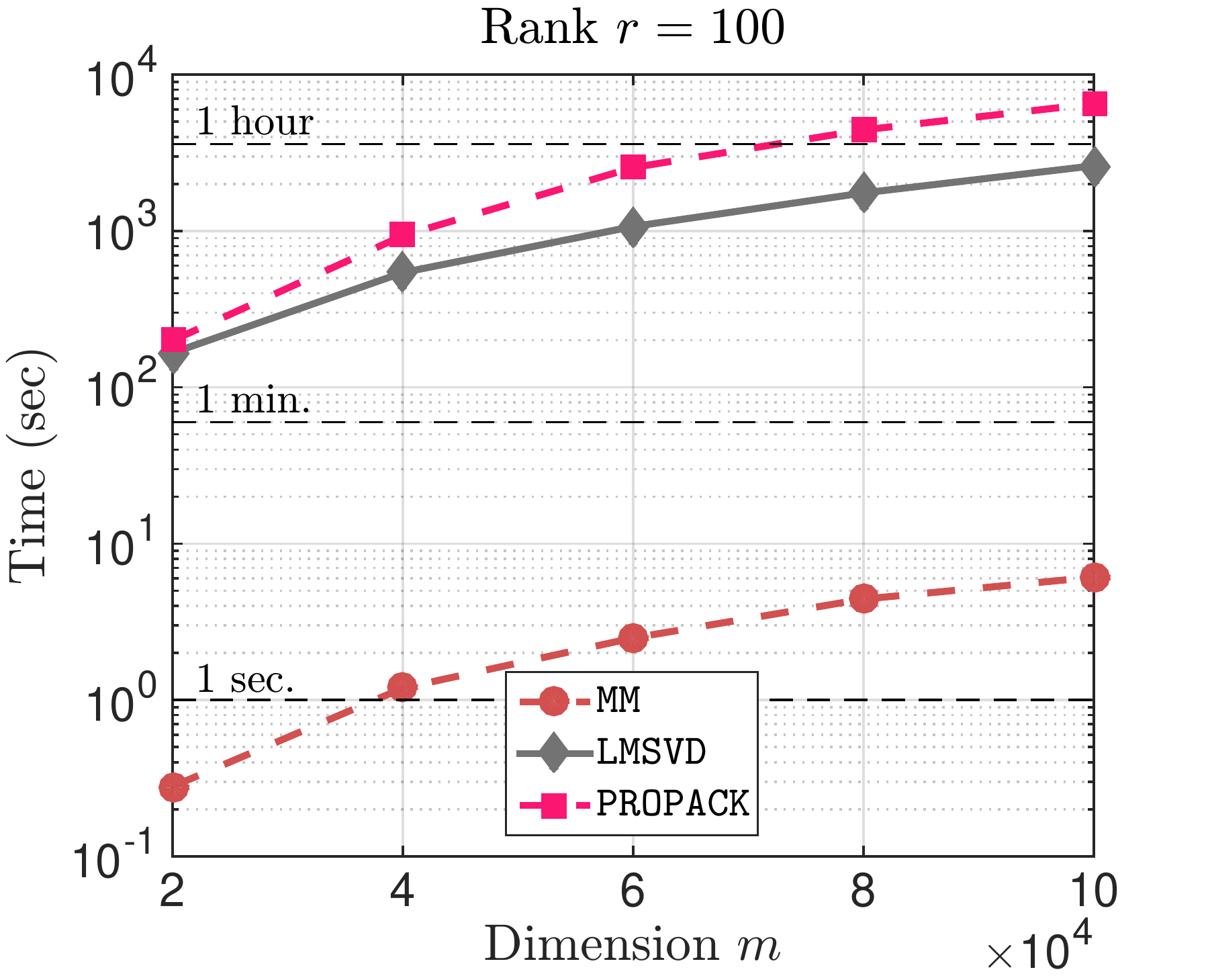}
\includegraphics[width=0.32\textwidth]{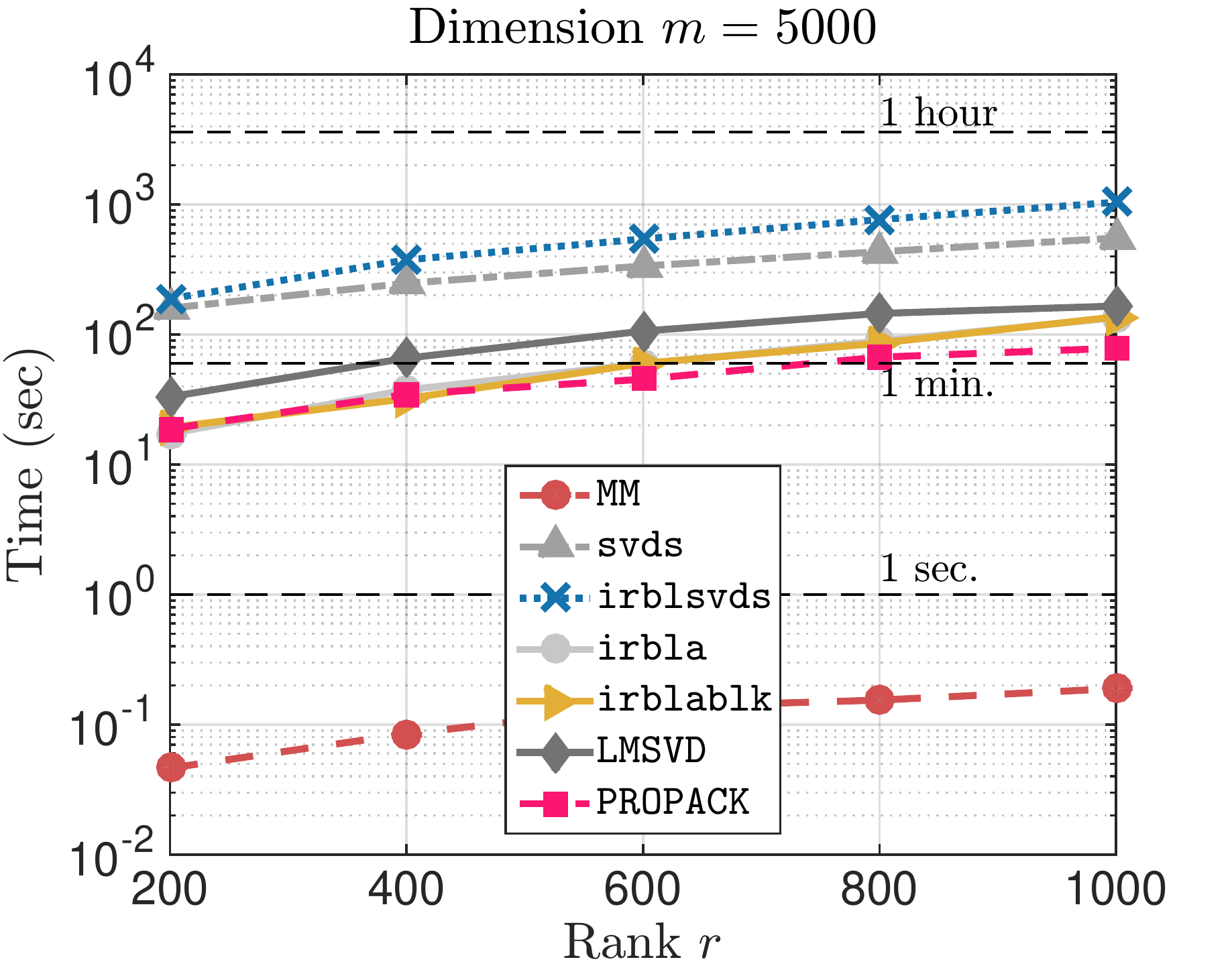}
\caption{Comparison of SVD procedures versus Matrix Matrix (MM) multiplication. \textit{Left panel:}  Varying dimension $m$ and constant rank $r = 100$. \textit{Middle panel:} Similar to left panel where $m$ scales larger and we focus on a subset of SVD algorithms that can scale up. \textit{Right panel:} Varying rank values and constant dimension $m = 5\cdot 10^3$. }\label{fig:svd_mm_comp}
\end{figure*}

\begin{table}[!h]
\centering
\ra{1.3}
\begin{scriptsize}
\rowcolors{2}{white}{black!05!white}
\begin{tabular}{c c c c c c c c c c c} \toprule
Algorithms & & \multicolumn{9}{c}{Error $\tfrac{\|\widehat{\Sigma} - \Sigma^\star\|_F}{\|\Sigma^\star\|_F}$, where $\Sigma^\star$ is diagonal matrix with top $r$ singular values from \texttt{svds}. } \\ 
\midrule
$\widehat{\Sigma}$  & \phantom{a} & \multicolumn{1}{c}{$m=2\cdot 10^3$} & \phantom {a} & \multicolumn{1}{c}{$m=4\cdot 10^3$} & & \multicolumn{1}{c}{$m=6\cdot 10^3$} & \phantom {a} & \multicolumn{1}{c}{$m=8\cdot 10^3$} & & \multicolumn{1}{c}{$m=10^4$} \\
\cmidrule{3-3} \cmidrule{5-5} \cmidrule{7-7} \cmidrule{9-9} \cmidrule{11-11} 
\texttt{irblsvds} & &  3.63e-15 & &  4.33e-09 & &  8.11e-11 & &  4.79e-12 & &  5.82e-10 \\ 
\texttt{irbla} & &  6.00e-15 & &  9.01e-07 & &  1.05e-04 & &  2.99e-04 & &  7.29e-04 \\ 
\texttt{irblablk}  & &  1.48e+03 & &  1.67e+03 & &  1.24e+03 & &  1.45e+03 & &  7.91e+11 \\ 
\texttt{LMSVD} & &  2.14e-14 & &  4.49e-12 & &  3.94e-11 & &  1.33e-10 & &  7.30e-10  \\ 
\texttt{PROPACK} & &  4.10e-12 & &  2.46e-10 & &  1.63e-12 & &  7.90e-12 & &  3.55e-11 \\ 
\bottomrule
\end{tabular}
\end{scriptsize}
\caption{Approximation errors of singular values, in the form $\tfrac{\|\widehat{\Sigma} - \Sigma^\star\|_F}{\|\Sigma^\star\|_F}$. Here, $\widehat{\Sigma}$ denote the diagonal matrix, returned by SVD subroutines, containing $r$ top singular values; we use \texttt{svds} to compute the reference matrix $\Sigma^\star$, that contains top-$r$ singular values of the input matrix. Observe that some algorithms deviate singificantly from the ``ground-truth": this is due to either early stopping (only a subset of singular values could be computed) or due to accumulating approximation error.} \label{tbl:svd_mm_comp}
\end{table}

\subsection{The Role of Regularizer $g$}{\label{sec:regularizer}}
In this section, we provide a simple example that illustrates the role of the regularizer $g$. 
As discussed in Section \ref{sec:bfgd_strcvx}, $g$ forces our algorithm to converge to a well-conditioned factorization of $\Xs$. This regularizer not only enables us to control and guarantee convergence of \algo, but also provides a better convergence rate, as we know next.

Figure \ref{fig:regularizer} (left panel) shows the convergence behavior of \algo, when $f$ and $f+g$ is used, with an ill-conditioned initial point $(\Uinit,\Vinit)$. 
It is obvious from the convergence plot that adding the regularizer results into faster convergence to an optimum. This difference in convergence rate is due to dependency on the condition numbers of $\Us$ and $\Vs$ that the algorithm converges to. 
As shown in Figure \ref{fig:regularizer} (right panel), the algorithm converges to a well-conditioned factorization of $\Xs$, while the condition number is not forced to decrease when there is no regularizer.
\begin{figure*}
\centering
\includegraphics[width=0.39\textwidth]{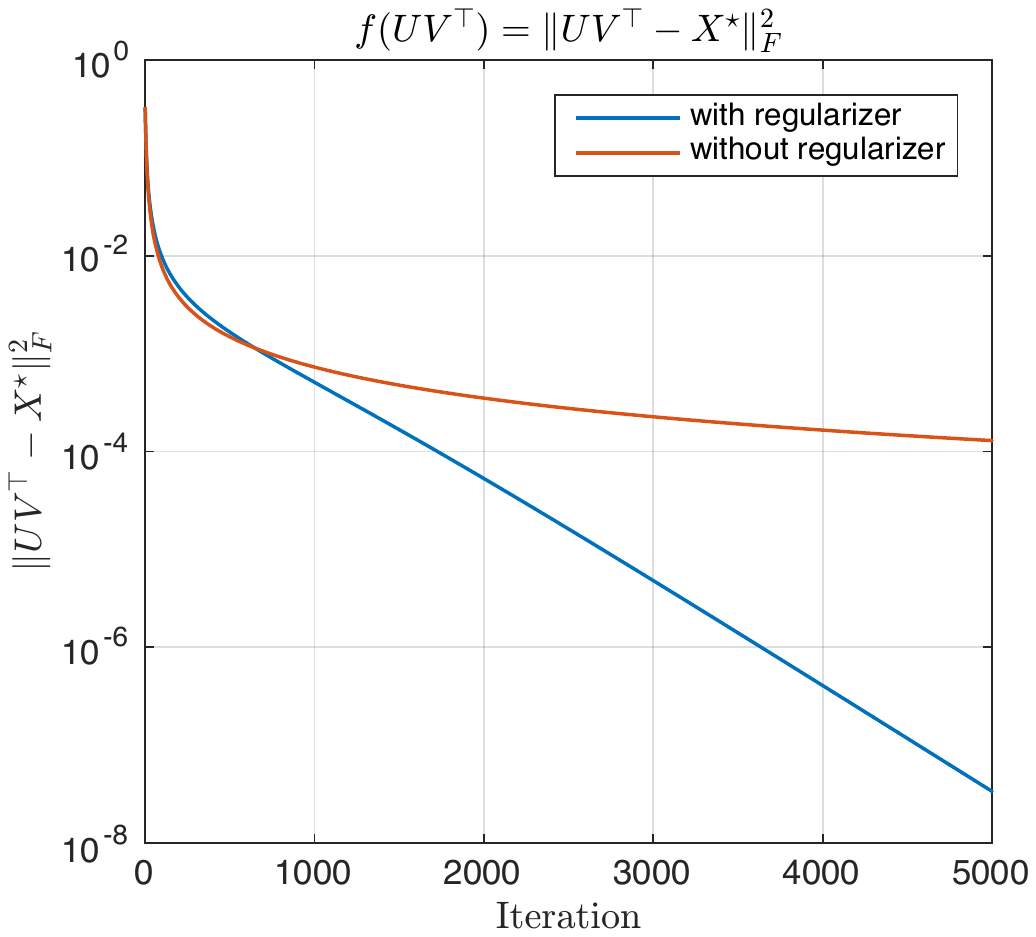}
\includegraphics[width=0.4\textwidth]{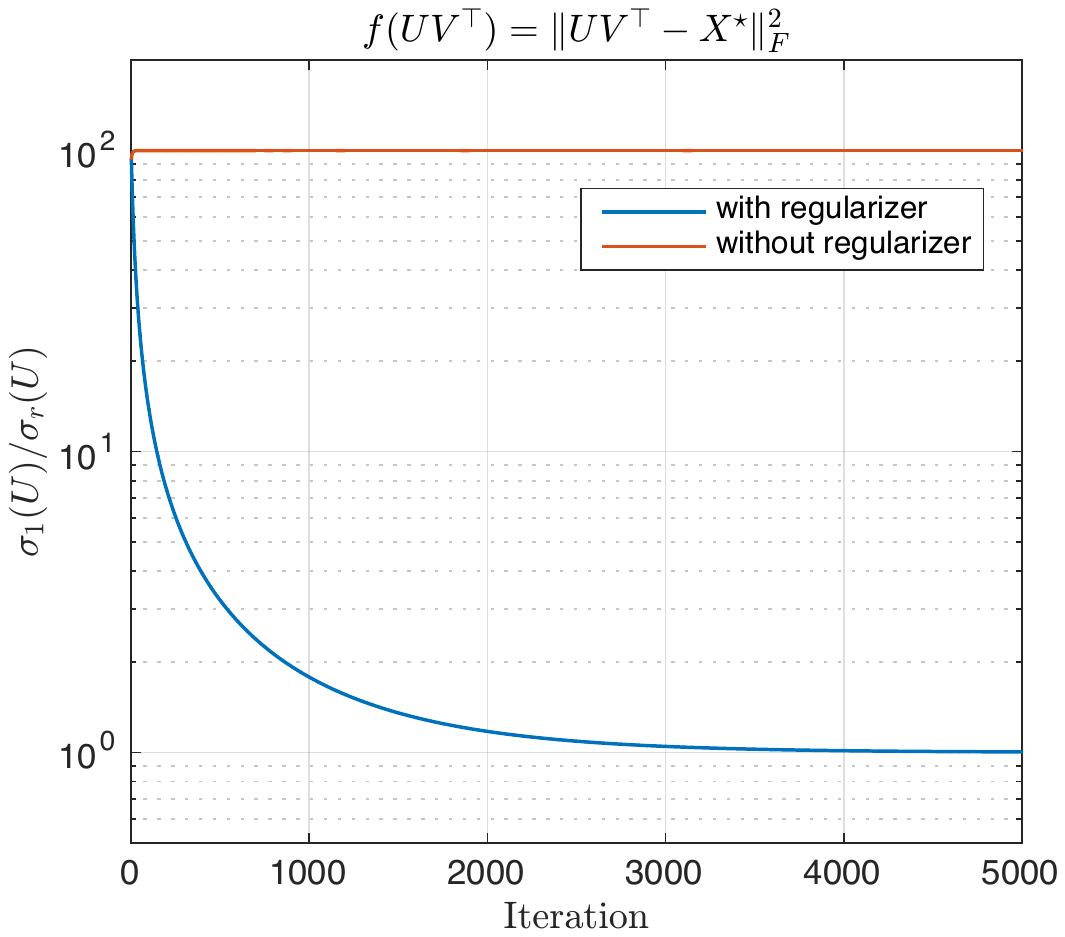}
\caption{ $f(UV^\top) = \normf{UV^\top - \Xs}^2$, where $\Xs = \Us {\Vs}^\top \in \R^{100 \times 100}$, and $\Us, \Vs \in \R^{100 \times 10}$ are orthonormal. The initial point is an ill-conditioned (in particular $\sigma_1(\Uinit)/\sigma_r(\Uinit) = \sigma_1(\Vinit)/\sigma_r(\Vinit) = 10^2$) pair of $X_0$ near $\Xs$. \textit{Left panel:} Convergence behaviors of \algo with a regularizer $g = \tfrac{1}{4} \normf{U^\top U - V^\top V}^2$ and without any regularizer. \textit{Right panel:} The ratios $\sigma_1(U)/\sigma_r(U)$ over iterations.} \label{fig:regularizer}
\end{figure*}

\subsection{Affine Rank Minimization Using Noiselet Linear Maps}{\label{sec:ARM}}

In this task, we consider the problem of \emph{affine rank minimization}. 
In particular, we observe unknown $\Xopt$ through a limited set of observations $y \in \R^p$, that satisfy:
\begin{align}
y = \mathcal{A}\left(\Xopt\right),
\end{align} 
where $\Xopt \in \R^{m \times n}, ~p \ll m \cdot n$, and $\mathcal{A}: \R^{m \times n} \rightarrow \R^{p}$ is a given linear map.
The task is to recover $\Xopt$, using $\mathcal{A}$ and $y$. 
Here, we use permuted and sub-sampled noiselets for the linear operator $\mathcal{A} $, due to their efficient implementation \citep{waters2011sparcs}; similar results can be obtained for $\mathcal{A}$ being a subsampled Fourier linear operator or, even, a random Gaussian linear operator.
For the purposes of this experiment, the ground truth $\Xopt$ is synthetically generated as the multiplication of two tall matrices, $\U^\star \in \R^{m \times r} $ and $\V^\star \in \R^{n \times r}$, 
such that $\Xopt = \U^\star \V^{\star \top}$ and $\|\Xopt\|_F = 1 $. 
Both $\U^\star $ and $\V^\star $ contain random, independent and identically distributed (i.i.d.) Gaussian entries, with zero mean and unit variance. 

\textit{List of algorithms.}
We compare the following state-of-the-art algorithms: 
$(i)$ the Singular Value Projection (\texttt{SVP}) algorithm \citep{jain2010guaranteed}, a non-convex, projected gradient descent algorithm for \eqref{MS:eq_00}, with {\it constant} step size selection (we study the case where $\mu = \sfrac{1}{3} $, as it is the one that showed the best performance in our experiments), 
%$(ii)$ the \textsc{Matrix ALPS II} variant in \citep{kyrillidis2014matrix}, an accelerated, first-order, non-convex algorithm, with adaptive step size and optimized sub-procedures for the criterion in \eqref{MS:eq_00}, 
$(ii)$ the \textsc{SparseApproxSDP} extension to non-square cases for \eqref{MS:eq_02} in \citep{jaggi2010simple}, based on \citep{hazan2008sparse}, where a putative solution is refined via rank-1 updates from the gradient\footnote{\textsc{SparseApproxSDP} in \citep{hazan2008sparse} avoids computationally expensive operations per iteration, such as full SVDs. In theory, at the $r$-th iteration, these schemes guarantee to compute a $\tfrac{1}{r}$-approximate solutio, with rank at most $r$, \emph{i.e.}, achieves a sublinear rate.}, 
$(iii)$ the matrix completion algorithm in \citep{sun2015guaranteed}, which we call \texttt{GuaranteedMC}\footnote{We note that the original algorithm in \citep{sun2015guaranteed} is designed for the matrix \emph{completion} problem, not the matrix \emph{sensing} problem here.}, where the objective is \eqref{MS:trans_eq_00},
$(iv)$ the Procrustes Flow algorithm in \citep{tu2016low} for \eqref{MS:trans_eq_00}, and $(v)$ the \algo algorithm.\footnote{The algorithm in \citep{zhao2015nonconvex} assumes step size that depends on RIP constants, which are NP-hard to compute; since no heuristic is proposed, we do not include this algorithm in the comparison list.}

\textit{Implementation details.}
To properly compare the algorithms in the above list, we preset a set of parameters that are common. 
In all experiments, we fix the number of observations in $y$ to $p = C \cdot n \cdot r$, where $n \geq m$ in our cases, and for varying values of $C$.
All algorithms in comparison are implemented in a \textsc{Matlab} environment, where no \texttt{mex}-ified parts present, apart from those used in SVD calculations; see below.

In all algorithms, we fix the maximum number of iterations to $T = 4000$, unless otherwise stated. 
We use the same stopping criteria for the majority of algorithms as:
\begin{align}
\frac{\|\X_t - \X_{t-1}\|_F}{\|\X_t\|_F} \leq \rm{tol},
\end{align} 
where $\X_t, ~\X_{t-1}$ denote the current and the previous estimates in the $\X$ space and $\rm{tol} := 5\cdot 10^{-6}$. 
For SVD calculations, we use the $\texttt{lansvd}$ implementation in PROPACK package \citep{larsen2004propack}.
For fairness, we modified all the algorithms so that they {\it exploit the true rank} $r$; however, we observed that small deviations from the true rank result in relatively small degradation in terms of the reconstruction performance.\footnote{In case the rank of $\Xopt$ is unknown, one has to predict the dimension of the principal singular space. 
The authors in \citep{jain2010guaranteed}, based on ideas in \citep{keshavan2009gradient}, propose to compute singular values incrementally until a significant gap between singular values is found. For a more recent discussion on how to efficiently estimate the numerical rank of a matrix, refer to \citep{ubaru2016fast}}

In the implementation of \algo, we set $g$ to be $\tfrac{1}{16} \cdot \|\U^\top \U - \V^\top \V\|_F^2$, as suggested in \citep{tu2016low}, for ease of comparison. 
Moreover, for our implementation of Procrustes Flow, we set the constant step size as $\mu := \tfrac{2}{187} \cdot \lbrace \tfrac{1}{\|\U_0\|_F^2}, \tfrac{1}{\|V_0\|_F^2\|} \rbrace$, as suggested in \citep{tu2016low}. 
We use the implementation of \citep{sun2015guaranteed}, with random initialization (unless otherwise stated) and regularization type $\texttt{soft}$, as suggested by their implementation.
In \citep{jaggi2010simple}, we require an upper bound on the nuclear norm of $\Xopt$; in our experiments we assume we know $\|\Xopt\|_*$, which requires a full SVD calculation. 
Moreover, for our experiments, we set the curvature constant for the \textsc{SparseApproxSDP} implementation to its true value $C_f = 1$.

For initialization, we consider the following settings: 
$(i)$ random initialization, where $\Xinit = \U_0 \V_0^\top$ for some randomly selected $\U_0$ and $\V_0$ such that $\|\Xinit\|_F = 1$, 
and $(ii)$ specific initialization, as suggested in each of the papers above.
Our specific initialization is based on the discussion in Section \ref{sec:init}, where $\Xinit = \proj{r}(-\frac{1}{L}\gradf(0))$.
Algorithms SVP, 
%\textsc{Matrix ALPS II}, 
\textsc{SparseApproxSDP} and the solver in \citep{sun2015guaranteed} work with random initialization.
For the initialization phase of \citep{tu2016low}, we consider two cases: $(i)$ the condition number $\kappa$ is known, where according to Theorem 3.3 in \citep{tu2016low}, we require $T_{\text{init}} := \lceil 3 \log (\sqrt{r} \cdot \kappa) + 5\rceil$ \textsc{SVP} iterations\footnote{Observe that setting $T_{\text{init}} = 1$ leads to spectral method initialization and the algorithm in \citep{zheng2015convergent} for non-square cases, given sufficient number of samples.}, and $(ii)$ the condition number $\kappa$ is unknown, where we use Lemma 3.4 in \citep{tu2016low}.

\textit{Results using random initialization. }
Figure \ref{fig:app_exp1} depicts the convergence performance of the above algorithms w.r.t. total execution time. 
Top row corresponds to the case $m = n = 1024$, bottom row to the case $m = 2048, ~n = 4096$. 
For all cases, we fix $r = 50$; from left to right, we decrease the number of available measurements, by decreasing the constant $C$. 
%\textsc{Matrix ALPS II} shows the best performance in terms of execution time: while still using SVD routines per iteration, \textsc{Matrix ALPS II} is specialized to solve matrix sensing problem instances and performs several subroutines per iteration (subspace exploration, debias steps, adaptive step size selection, among others).
%However, \textsc{Matrix ALPS II} applies only to this problem. 
%Compared to \textsc{Matrix ALPS II}, 
\algo shows the best performance, compared to the rest of the algorithms. 
It is notable that \algo performs better than \textsc{SVP}, by avoiding SVD calculations and employing a better step size selection.\footnote{If our step size is used in \texttt{SVP}, we get slightly better performance, but not in a universal manner. }
For this setting, \texttt{GuaranteedMC} converges to a local minimum while \textsc{SparseApproxSDP} and Procrustes Flow show close to sublinear convergence rate. 

\begin{figure*}[t!]
\centering
\includegraphics[width=0.32\textwidth]{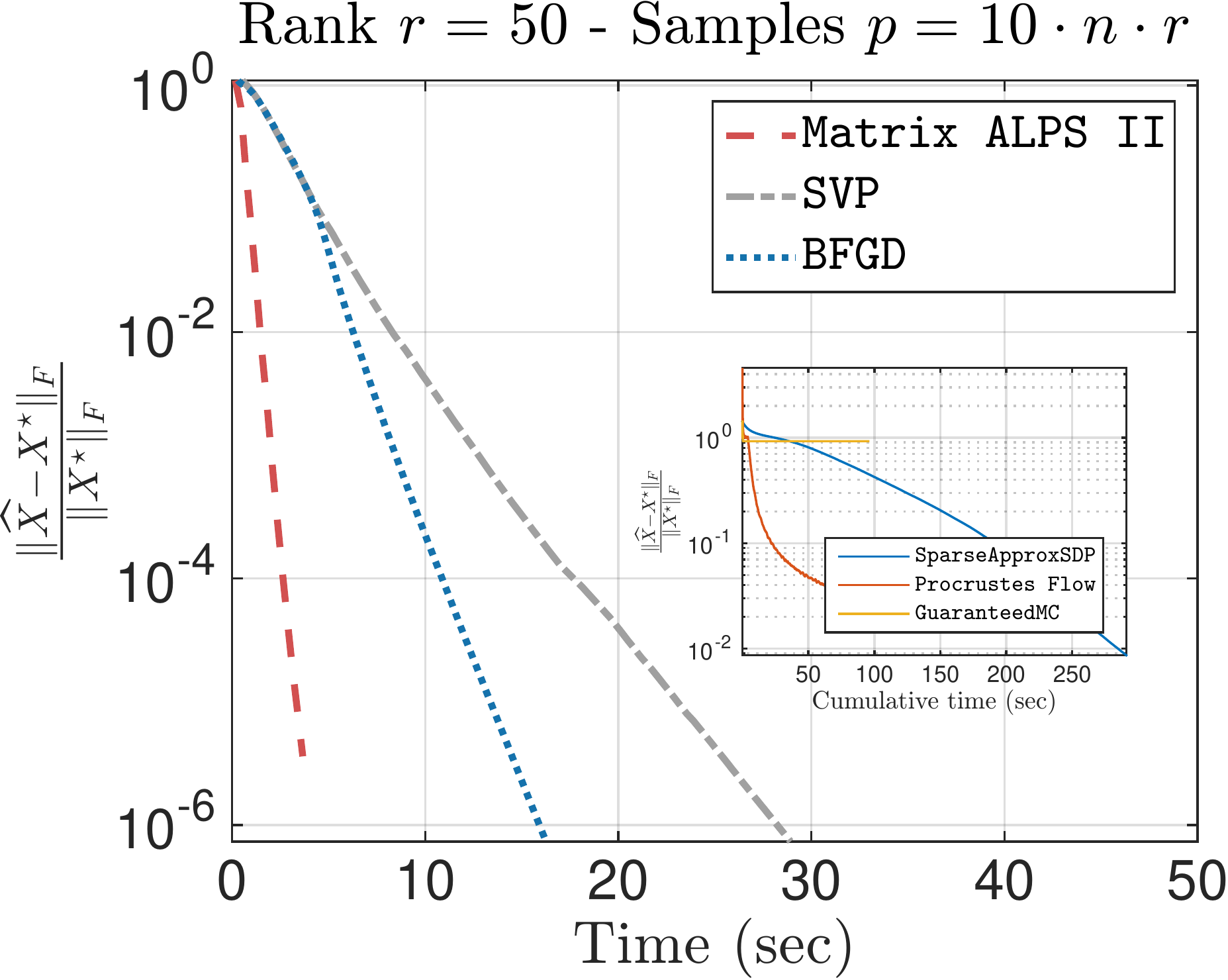} 
\includegraphics[width=0.33\textwidth]{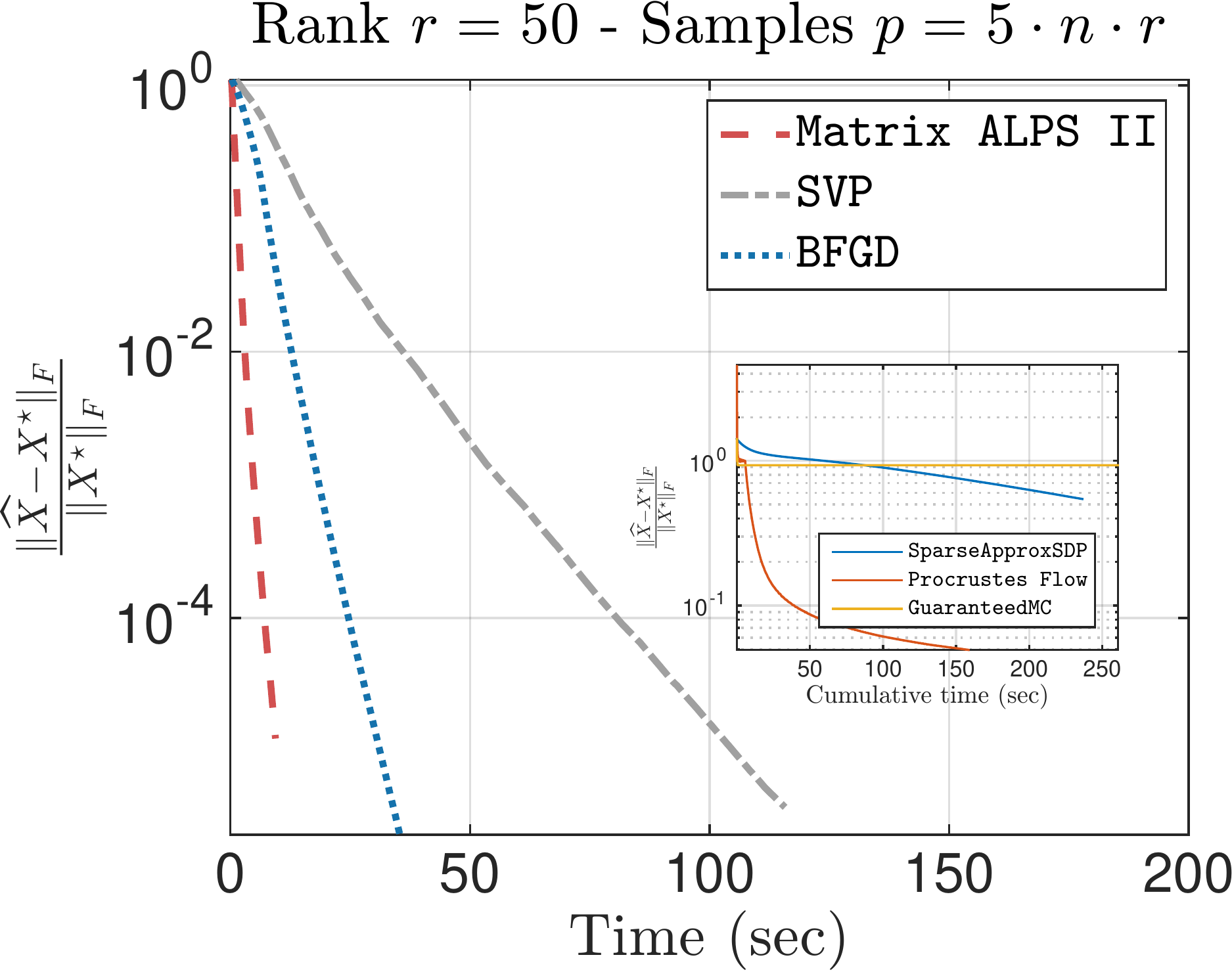} 
\includegraphics[width=0.33\textwidth]{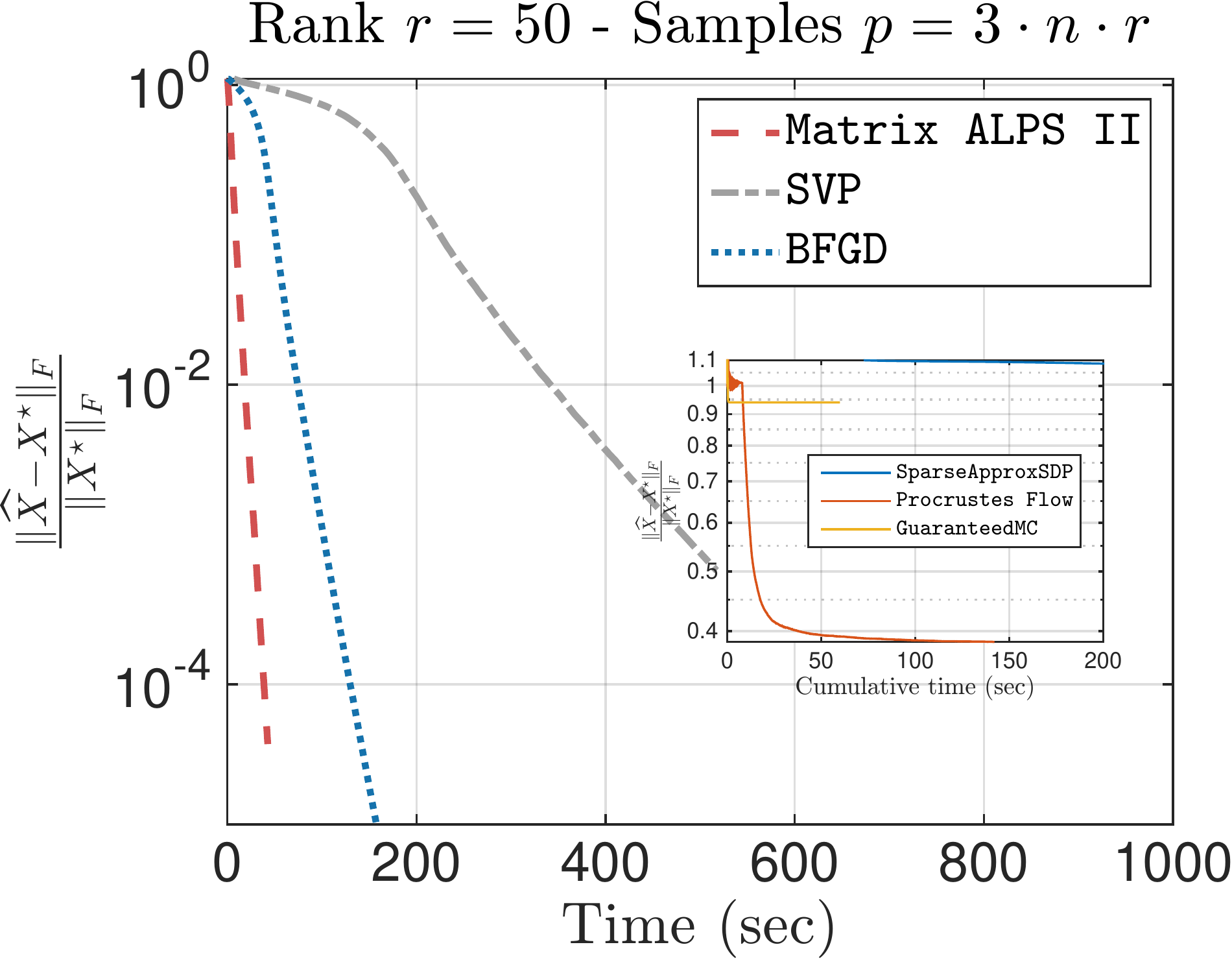} \\
\includegraphics[width=0.32\textwidth]{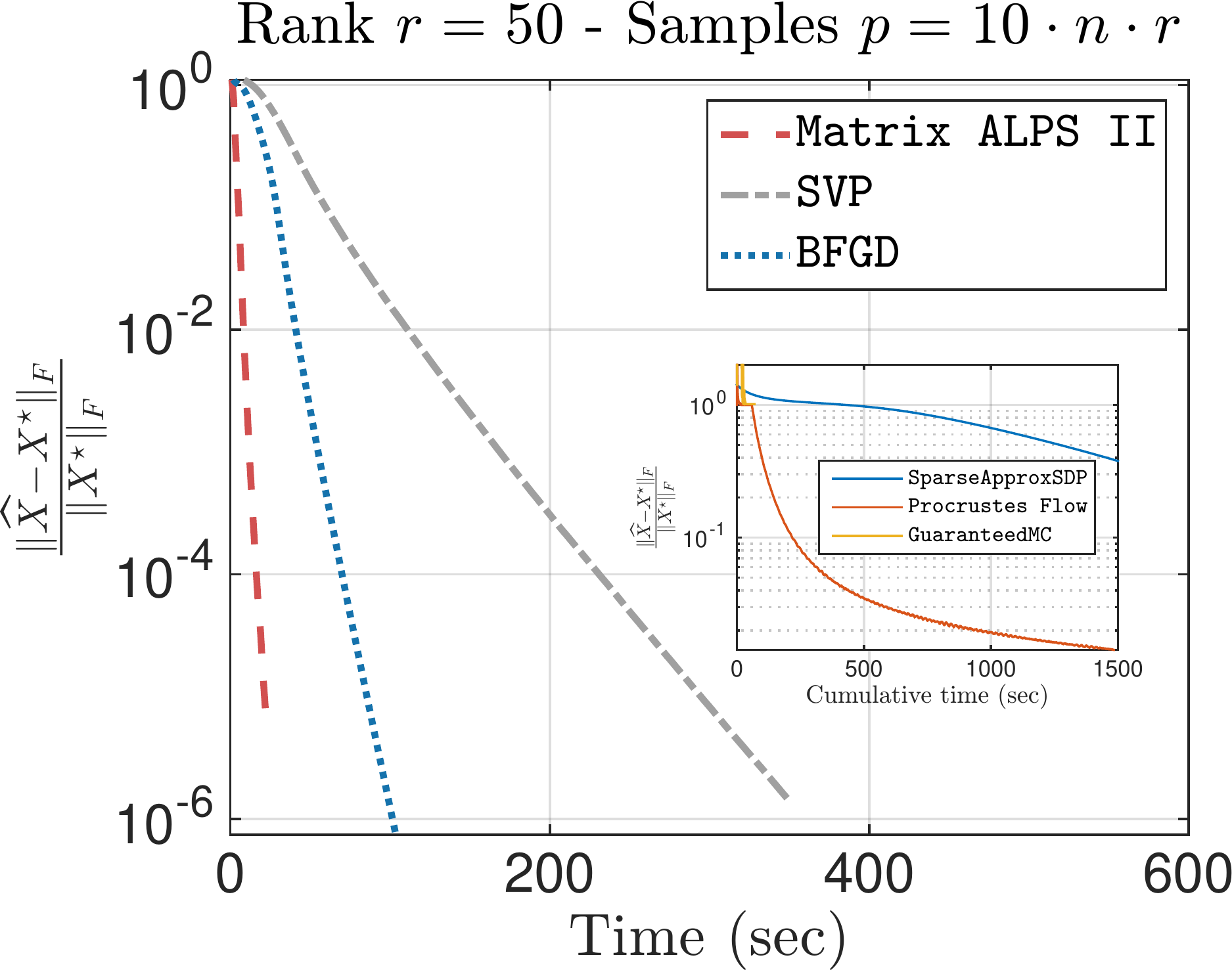} 
\includegraphics[width=0.32\textwidth]{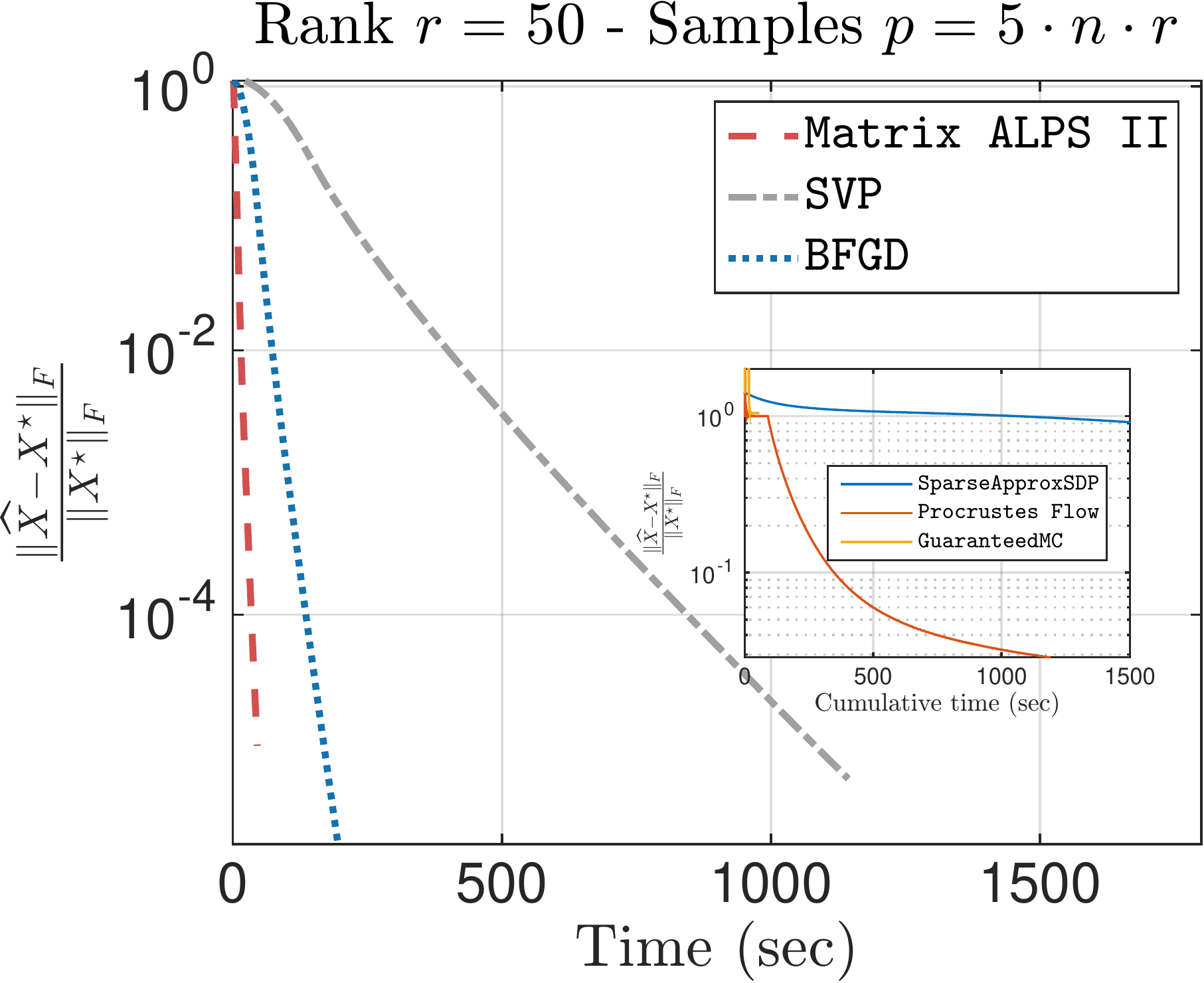} 
\includegraphics[width=0.32\textwidth]{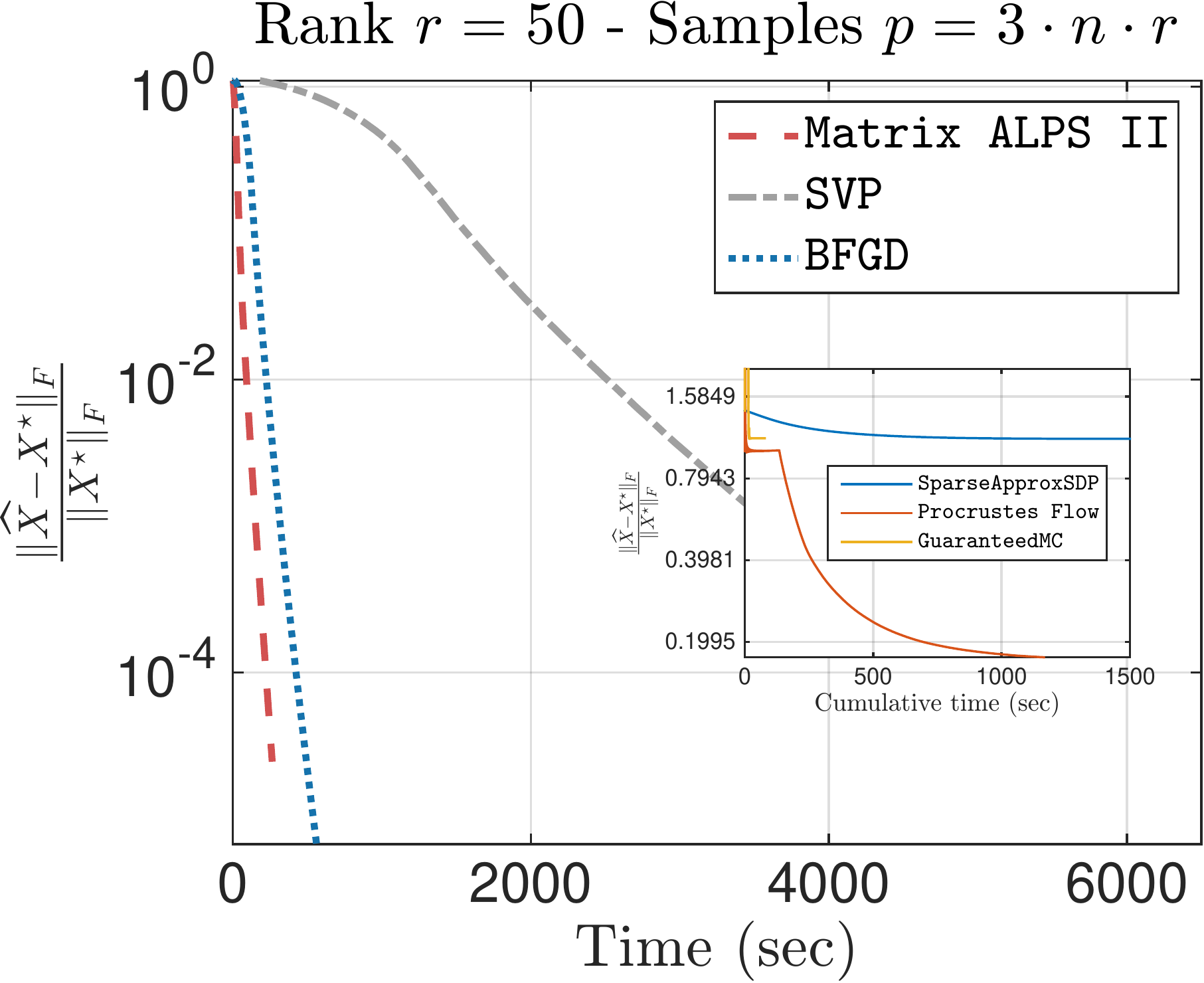} \\
\caption{Convergence performance of algorithms under comparison w.r.t. $\tfrac{\|\widehat{X} - \X^\star\|_F}{\|\X^\star\|_F}$ vs. the total execution time. Top row corresponds to dimensions $m = n = 1024$; bottom row corresponds to dimensions $m = 2048, ~n = 4096$. Details on problem configuration are given on plots' title. For all cases, we used $\mathcal{A}$ as noiselets and $r = 50$.
}
\label{fig:app_exp1}
\end{figure*}

To further show how the performance of each algorithms scales as dimension increases, we provide aggregated results in Tables \ref{tbl:Comp1}-\ref{tbl:Comp2}. 
Observe that \algo is one order of magnitude faster than the rest non-convex factorization algorithms. %, while being competitive with \textsc{Matrix ALPS II} algorithm. 
Table \ref{tbl:Comp3} shows the \emph{median time} per iteration, spent by each algorithm, for both problem instances and $C = 3$.
Observe that 
%\textsc{Matrix ALPS II} and 
\textsc{SVP} requires one order of magnitude more time to complete one iteration, mostly due to the SVD step. 
In stark contrast, all factorization-based approaches spend less time per iteration, as was expected by the discussion in Section \ref{sec:svd_mm}. % however, less progress is achieved by performing only matrix-matrix computations. 

\begin{table*}[!h]
\centering
\ra{1.3}
\begin{tiny}
\rowcolors{2}{white}{black!05!white}
\begin{tabular}{c c c c c c c c c c c c c} \toprule
& \phantom{a} & \multicolumn{3}{c}{$r = 50$, $C = 10$} & \phantom {a} & \multicolumn{3}{c}{$r = 50$, $C = 5$} & \phantom {a} & \multicolumn{3}{c}{$r = 50$, $C = 3$} \\
\cmidrule {3-5} \cmidrule{7-9} \cmidrule{11-13} 
Algorithm & \phantom{a} & $\tfrac{\|\widehat{\X} - \X^\star\|_F}{\|\X^\star\|_F}$ & \phantom{a}  & Total time 
			   & \phantom{a} & $\tfrac{\|\widehat{\X} - \X^\star\|_F}{\|\X^\star\|_F}$ & \phantom{a}  & Total time 
			   & \phantom{a} & $\tfrac{\|\widehat{\X} - \X^\star\|_F}{\|\X^\star\|_F}$ & \phantom{a}  & Total time  \\
\cmidrule{1-1} \cmidrule {3-3} \cmidrule{5-5} \cmidrule{7-7} \cmidrule{9-9} \cmidrule {11-11} \cmidrule{13-13}
%\texttt{Matrix ALPS II} & &  1.6096e-06 & &  3.9296 & & 1.0070e-05 & &  9.6347 & & 3.7877e-05 & &  42.9211  \\ 
\texttt{SVP} & &  6.8623e-07 & &  29.0563 & & 3.7511e-06 & &  115.9088 & & 5.7362e-04 & &  517.5673  \\ 
\texttt{Procrustes Flow} & &  8.6546e-03 & &  291.0442 & & 5.4418e-01 & &  236.4496 & & 1.0831e+00 & &  223.2486  \\ 
\texttt{SparseApproxSDP} & &  1.5616e-02 & &  223.1522 & & 4.9298e-02 & &  158.5459 & & 3.8444e-01 & &  141.5906  \\ 
\texttt{GuaranteedMC} & &  9.2570e-01 & &  95.5135 & & 9.3168e-01 & &  260.8471 & & 9.3997e-01 & &  59.4259  \\ 
\texttt{BFGD} & &  7.0830e-07 & &  16.2818 & & 2.3199e-06 & &  35.3988 & & 1.1575e-05 & &  157.6610  \\ 
\bottomrule
\end{tabular}
\end{tiny}
\caption{Summary of comparison results for reconstruction and efficiency. Here, $m = n = 1024$, resulting into $1,048,576$ variables to optimize,  and $\mathcal{A}$ is a noiselet-based subsampled linear map. The number of samples $p$ satisfies $p = C \cdot n \cdot r$ for various values of constant $C$. Time reported is in seconds.} \label{tbl:Comp1}
\end{table*}

\begin{table*}[!h]
\centering
\ra{1.3}
\begin{tiny}
\rowcolors{2}{white}{black!05!white}
\begin{tabular}{c c c c c c c c c c c c c} \toprule
& \phantom{a} & \multicolumn{3}{c}{$r = 50$, $C = 10$} & \phantom {a} & \multicolumn{3}{c}{$r = 50$, $C = 5$} & \phantom {a} & \multicolumn{3}{c}{$r = 50$, $C = 3$} \\
\cmidrule {3-5} \cmidrule{7-9} \cmidrule{11-13} 
Algorithm & \phantom{a} & $\tfrac{\|\widehat{\X} - \X^\star\|_F}{\|\X^\star\|_F}$ & \phantom{a}  & Total time 
			   & \phantom{a} & $\tfrac{\|\widehat{\X} - \X^\star\|_F}{\|\X^\star\|_F}$ & \phantom{a}  & Total time 
			   & \phantom{a} & $\tfrac{\|\widehat{\X} - \X^\star\|_F}{\|\X^\star\|_F}$ & \phantom{a}  & Total time  \\
\cmidrule{1-1} \cmidrule {3-3} \cmidrule{5-5} \cmidrule{7-7} \cmidrule{9-9} \cmidrule {11-11} \cmidrule{13-13}
%\texttt{Matrix ALPS II} & &  2.9961e-06 & &  23.7907 & & 7.3350e-06 & &  46.7165 & & 2.2546e-05 & &  266.1574 \\ 
\texttt{SVP} & &  1.4106e-06 & &  349.7021 & & 5.6928e-06 & &  1144.7489 & & 1.4110e-04 & &  4703.2012 \\ 
\texttt{Procrustes Flow} & &  2.1174e-01 & &  1909.4748 & & 8.7944e-01 & &  1653.7020 & & 1.1088e+00 & &  1692.7454 \\ 
\texttt{SparseApproxSDP} & &  1.4268e-02 & &  1484.2220 & & 2.8839e-02 & &  1187.5287 & & 1.7544e-01 & &  1165.4210 \\ 
\texttt{GuaranteedMC} & &  1.0114e+00 & &  69.2279 & & 1.0465e+00 & &  53.1636 & & 1.1147e+00 & &  78.2304 \\ 
\texttt{BFGD} & &  6.9736e-07 & &  103.8387 & & 1.7946e-06 & &  195.5111 & & 6.6787e-06 & &  561.8248 \\ 
\bottomrule
\end{tabular}
\end{tiny}
\caption{Summary of comparison results for reconstruction and efficiency. Here, $m = 2048, ~n = 4096$, resulting into $8,388,608$ variables to optimize, and $\mathcal{A}$ is a noiselet-based subsampled linear map. The number of samples $p$ satisfies $p = C \cdot n \cdot r$ for various values of constant $C$. Time reported is in seconds.} \label{tbl:Comp2}
\end{table*}

\begin{table*}[!h]
\centering
\ra{1.3}
\begin{scriptsize}
\rowcolors{2}{white}{black!05!white}
\begin{tabular}{c c c c c} \toprule
& \phantom{a} & \multicolumn{1}{c}{$m = n = 1024$, $C = 3$} & \phantom {a} & \multicolumn{1}{c}{$m = 2048, ~n = 4096$, $C = 3$} \\
%\cmidrule {3-3} \cmidrule{5-5}
Algorithm & \phantom{a} & Median time per iter. & \phantom{a}  & Median time per iter. \\	   
\cmidrule{1-1} \cmidrule {3-3} \cmidrule{5-5}
%\texttt{Matrix ALPS II} & &  2.360e-01 & &  2.461e+00 \\ 
\texttt{SVP} & &  1.604e-01 & &  1.040e+00 \\ 
\texttt{Procrustes Flow} & &  5.871e-02 & &  4.525e-01 \\ 
\texttt{SparseApproxSDP} & &  3.407e-02 & &  3.001e-01 \\ 
\texttt{GuaranteedMC} & &  7.142e-02 & &  4.059e-01 \\ 
\texttt{BFGD} & &  5.337e-02 & &  3.981e-01 \\
\bottomrule
\end{tabular}
\end{scriptsize}
\caption{Median time per iteration. Time reported is in seconds.} \label{tbl:Comp3}
\end{table*}

\textit{Results using specific initialization.}\footnote{\citep{bhojanapalli2016global} recently proved that random initialization is sufficient to lead to the optimum $\X^\star$ for matrix sensing problems, while operating on the factors for the case where $\X^\star \succeq 0$, \emph{i.e.}, $X^\star$ is square. We conjecture similar results can be proved for the non-square case, but we still consider specific initializations for completeness.} 
In this case, we study the effect of initialization in the convergence performance of each algorithm.
To do so, we focus only on the factorization-based algorithms: Procrustes Flow, \texttt{GuaranteedMC}, and \algo.
We consider two problem cases: $(i)$ all these schemes use \emph{our initialization procedure}, and $(ii)$ each algorithm uses its own suggested initialization procedure. 
The results are depicted in Tables \ref{tbl:Comp4}-\ref{tbl:Comp5}, respectively. 

Using our initialization procedure for all algorithms, we observe that both Procrustes Flow and \texttt{GuaranteedMC} schemes can compute an approximation $\widehat{X}$ such that $\tfrac{\|\widehat{\X} - \Xopt\|_F}{\|\Xopt\|_F} > 10^{-1}$. 
In contrast, our approach achieves a solution $\widehat{X}$ that is close to the stopping criterion, \emph{i.e.}, $\tfrac{\|\widehat{\X} - \Xopt\|_F}{\|\Xopt\|_F} \approx 10^{-6}$.
\begin{table*}[!h]
\centering
\ra{1.3}
\begin{scriptsize}
\rowcolors{2}{white}{black!05!white}
\begin{tabular}{c c c c c c c c c} \toprule
& \phantom{a} & \multicolumn{3}{c}{$m = n = 1024$, $C = 10$, $r = 50$} & \phantom {a} & \multicolumn{3}{c}{$m = n = 1024$, $C = 10$, $r = 5$} \\
%\cmidrule {3-3} \cmidrule{5-5}
Algorithm & \phantom{a} & $\tfrac{\|\widehat{\X} - \X^\star\|_F}{\|\X^\star\|_F}$ &  & Total time & \phantom{a} & $\tfrac{\|\widehat{\X} - \X^\star\|_F}{\|\X^\star\|_F}$ &  & Total time\\	   
\cmidrule{1-1} \cmidrule {3-3} \cmidrule{5-5} \cmidrule{7-7} \cmidrule{9-9}
\texttt{Procrustes Flow} & &  2.2703e+01 & &  281.2095 & & 4.0432e+01 & &  192.0993 \\ 
\texttt{GuaranteedMC} & &  9.2570e-01 & &  96.8512 & & 4.7646e-01 & &  2.4792  \\ 
\texttt{BFGD} & &  3.7055e-06 & &  52.5205 & & 8.1246e-06 & &  65.4926 \\ 
\bottomrule
\end{tabular}
\end{scriptsize}
\caption{Summary of results of factorization algorithms using our proposed initialization. } \label{tbl:Comp4}
\end{table*}

Using different initialization schemes per algorithm, the results are depicted in Table \ref{tbl:Comp5}. 
We remind that \texttt{GuaranteedMC} is designed for matrix completion tasks, where the linear operator is a selection mask of the entries.
Observe that Procrustes Flow's performance improves significantly by using their proposed initialization: the idea is to perform \textsc{SVP} iterations to get to a good initial point; then switch to non-convex factored gradient descent for low per-iteration complexity.
However, this initialization is computationally expensive: Procrustes Flow might end up performing several \textsc{SVP} iterations. 
This can be observed \emph{e.g.}, in the case $m = n = 1024, ~r = 5$ and comparing the results in Tables \ref{tbl:Comp4}-\ref{tbl:Comp5}: for this case, Procrustes Flow performs $T = 4000$ iterations when our initialization is used and spends $\sim 200$ seconds, while in Table \ref{tbl:Comp5} it performs $T \ll 4000$ iterations, at least $20\%$ of them using \textsc{SVP}, and consumes $\sim 2000$ seconds.

\begin{table*}[!h]
\centering
\ra{1.3}
\begin{scriptsize}
\rowcolors{2}{white}{black!05!white}
\begin{tabular}{c c c c c c c c c} \toprule
& \phantom{a} & \multicolumn{3}{c}{$m = n = 1024$, $C = 10$, $r = 50$} & \phantom {a} & \multicolumn{3}{c}{$m = n = 1024$, $C = 10$, $r = 5$} \\
%\cmidrule {3-3} \cmidrule{5-5}
Algorithm & \phantom{a} & $\tfrac{\|\widehat{\X} - \X^\star\|_F}{\|\X^\star\|_F}$ &  & Total time & \phantom{a} & $\tfrac{\|\widehat{\X} - \X^\star\|_F}{\|\X^\star\|_F}$ &  & Total time\\	   
\cmidrule{1-1} \cmidrule {3-3} \cmidrule{5-5} \cmidrule{7-7} \cmidrule{9-9}
\texttt{Procrustes Flow} & &  3.2997e-05 & &  390.6830 & & 8.5741e-04 & &  2017.7942 \\ 
\texttt{GuaranteedMC} & &  9.2570e-01 & &  114.9332 & & 1.0114e+00 & &  68.1775 \\ 
\texttt{BFGD} & &  3.6977e-06 & &  64.2690 & & 3.1471e-06 & &  74.2345 \\ 
\midrule
& \phantom{a} & \multicolumn{3}{c}{$m = 2048,n=4096$, $C = 10$, $r = 50$} & \phantom {a} & \multicolumn{3}{c}{$m = 2048,n=4096$, $C = 10$, $r = 5$} \\
%\cmidrule {3-3} \cmidrule{5-5}
Algorithm & \phantom{a} & $\tfrac{\|\widehat{\X} - \X^\star\|_F}{\|\X^\star\|_F}$ &  & Total time & \phantom{a} & $\tfrac{\|\widehat{\X} - \X^\star\|_F}{\|\X^\star\|_F}$ &  & Total time\\	   
\cmidrule{1-1} \cmidrule {3-3} \cmidrule{5-5} \cmidrule{7-7} \cmidrule{9-9}
\texttt{Procrustes Flow} & &  4.9896e-02 & &  265.2787 & & 4.2263e-02 & &  1497.6867 \\ 
\texttt{GuaranteedMC} & &  4.7646e-01 & &  4.0752 & & 1.0302e+00 & &  35.0559 \\ 
\texttt{BFGD} & &  8.1381e-06 & &  83.3411 & & 5.8428e-06 & &  379.1430 \\ 
\bottomrule
\end{tabular}
\end{scriptsize}
\caption{Summary of results of factorization algorithms using each algorithm's proposed initialization. } \label{tbl:Comp5}
\end{table*}

%As a concluding remark, we note that similar results have been observed in noisy settings and, thus, are omitted.

\subsection{Image Denoising as Matrix Completion Problem}{\label{sec:image_exp}}

In this example, we consider the matrix completion setting for an image denoising task: 
In particular, we observe a limited number of pixels from the original image and perform a low rank approximation based only on the set of measurements---similar experiments can be found in \citep{kyrillidis2014matrix, wen2012solving}. 
We use real data images: While the true underlying image might not be low-rank, we apply our solvers to obtain low-rank approximations.

Figures \ref{fig:app_exp2}-\ref{fig:app_exp4} depict the reconstruction results for three image cases. 
In all cases, we compute the best $100$-rank approximation of each image (see \emph{e.g.}, the top middle image in Figure \ref{fig:app_exp2}, where the full set of pixels is observed) and we observe only the $35\%$ of the total number of pixels, randomly selected---a realization is depicted in the top right plot in Figure \ref{fig:app_exp2}. 
Given a fixed common tolerance level and the same stopping criterion as before, the top rows of Figures \ref{fig:app_exp2}-\ref{fig:app_exp4} show the recovery performance achieved by a range of algorithms under consideration---the peak signal to noise ration (PSNR), depicted in dB, corresponds to median values after 10 Monte-Carlo realizations. 
%In all cases, we note that \textsc{Matrix ALPS II} has overall slightly better performance as compared to the rest of the algorithms, as a more specialized algorithms for matrix completion problems. 
Our algorithm shows competitive performance compared to simple gradient descent schemes as \texttt{SVP} and Procrustes Flow, while being a fast and scalable solver. 
Table \ref{tbl:Comp6} contains timing results from 10 Monte Carlo random realizations for all image cases.

\begin{figure*}[t!]
\centering
\includegraphics[width=0.32\textwidth]{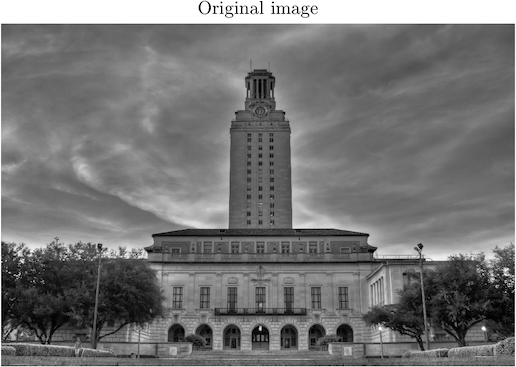} 
\includegraphics[width=0.32\textwidth]{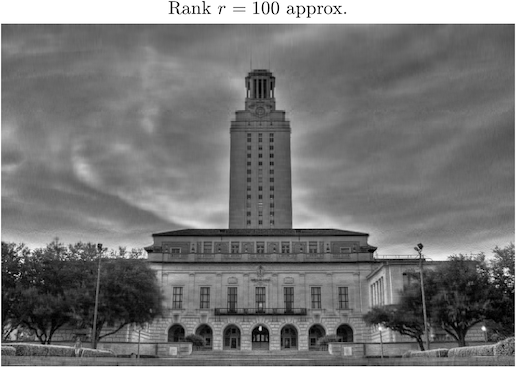} 
\includegraphics[width=0.32\textwidth]{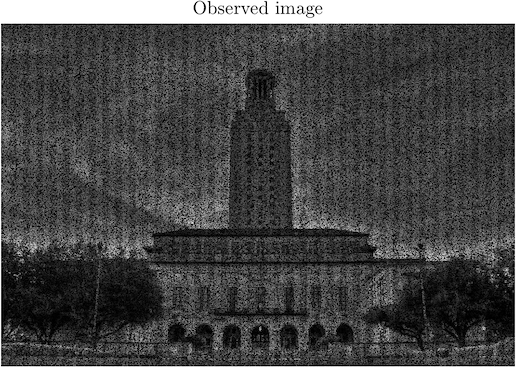} \\
\includegraphics[width=0.32\textwidth]{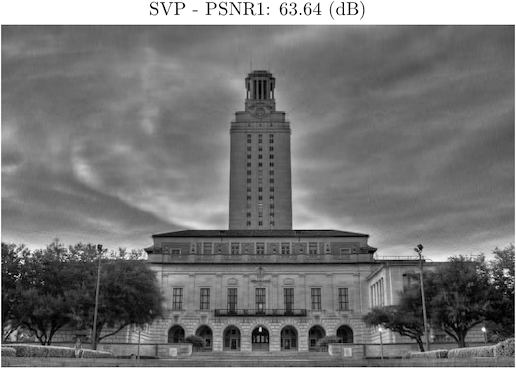} 
\includegraphics[width=0.32\textwidth]{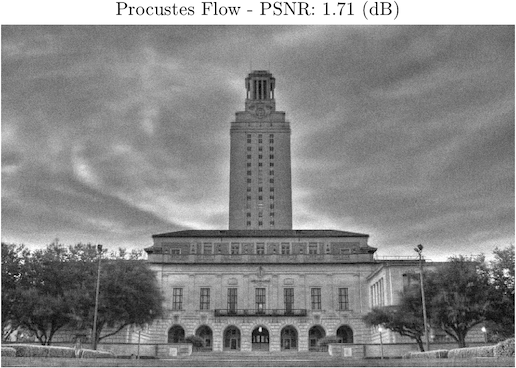} 
\includegraphics[width=0.32\textwidth]{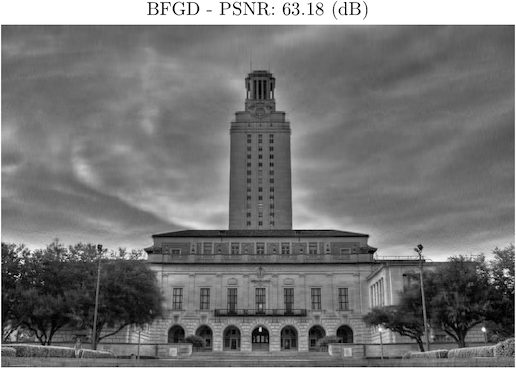}
\caption{Reconstruction performance in image denoising settings. The image size is $2845 \times 4266$ ($12,136,770$ pixels) and the approximation rank is preset to $r = 100$. We observe 35\% of the pixels of the true image. We depict the median reconstruction error with respect to the true image in dB over 10 Monte Carlo
realizations.}
\label{fig:app_exp2}
\end{figure*}

\begin{figure*}[t!]
\centering
\includegraphics[width=0.32\textwidth]{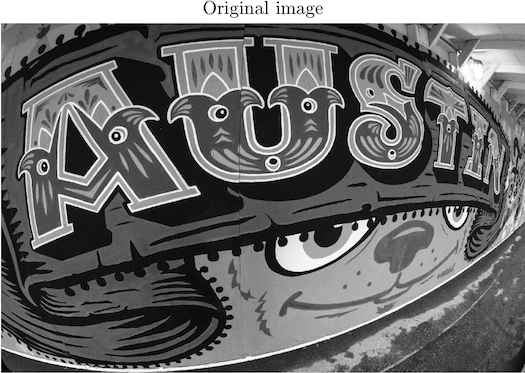} 
\includegraphics[width=0.32\textwidth]{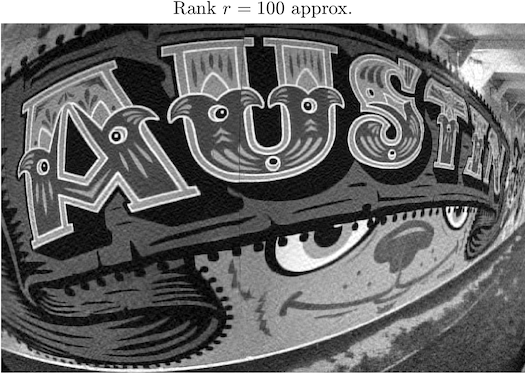} 
\includegraphics[width=0.32\textwidth]{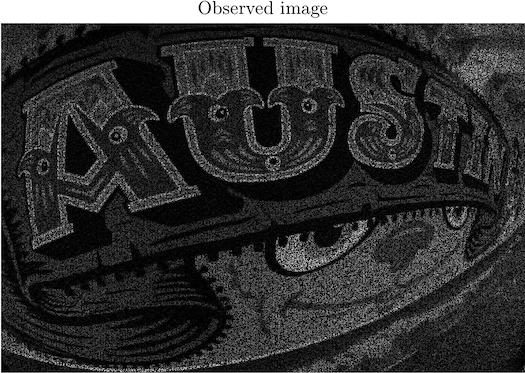} \\
\includegraphics[width=0.32\textwidth]{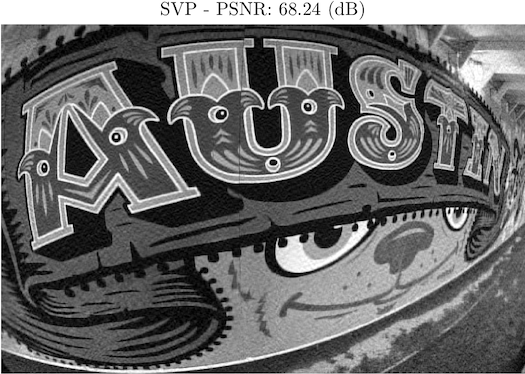} 
\includegraphics[width=0.32\textwidth]{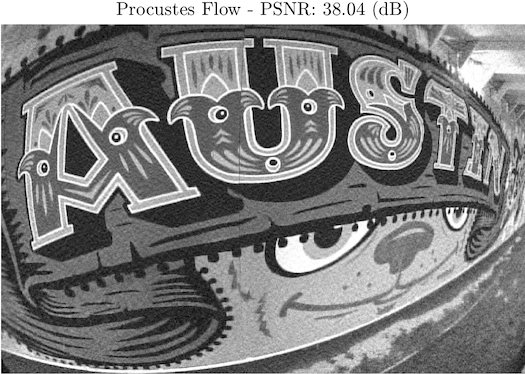} 
\includegraphics[width=0.32\textwidth]{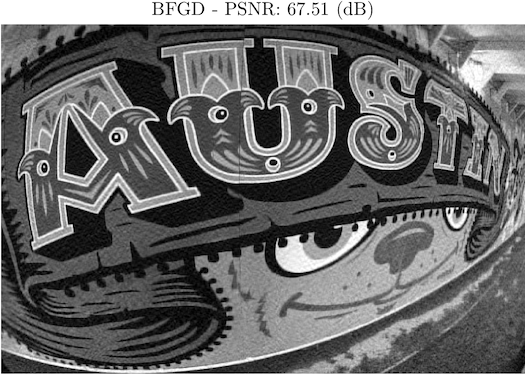}
\caption{Reconstruction performance in image denoising settings. The image size is $3309 \times 4963$ ($16,422,567$ pixels) and the approximation rank is preset to $r = 100$. We observe 30\% of the pixels of the true image. We depict the median reconstruction error with respect to the true image in dB over 10 Monte Carlo
realizations.}
\label{fig:app_exp3}
\end{figure*}

\begin{figure*}[t!]
\centering
\includegraphics[width=0.32\textwidth]{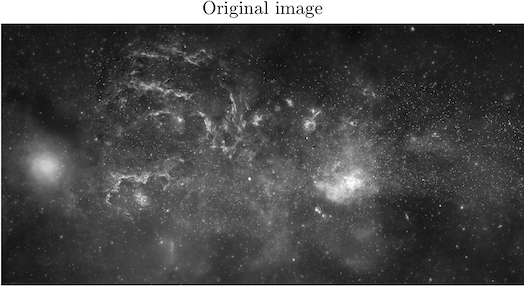} 
\includegraphics[width=0.32\textwidth]{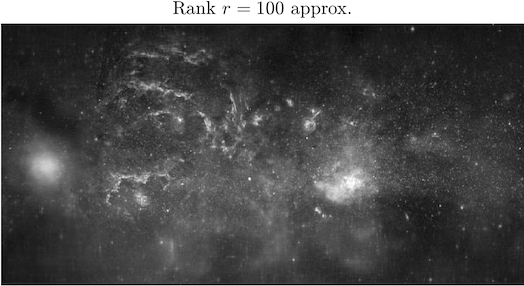} 
\includegraphics[width=0.32\textwidth]{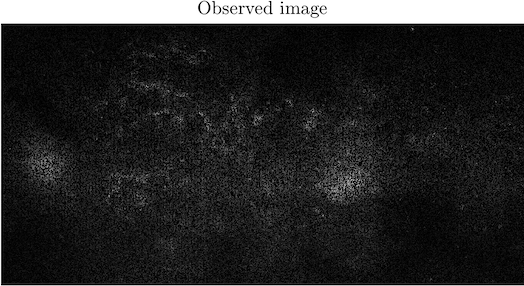} \\
\includegraphics[width=0.32\textwidth]{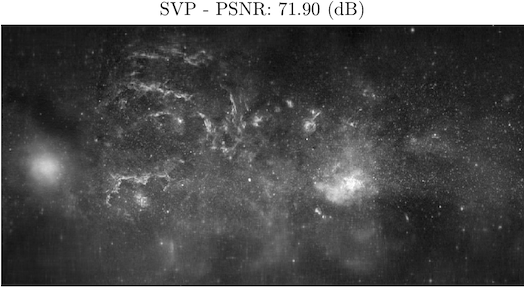} 
\includegraphics[width=0.32\textwidth]{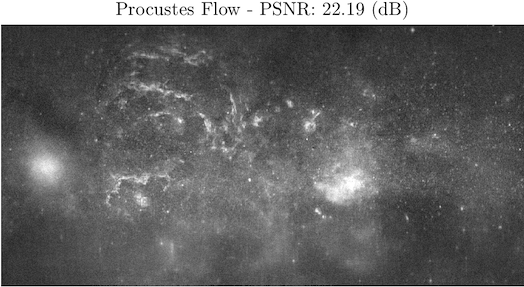} 
\includegraphics[width=0.32\textwidth]{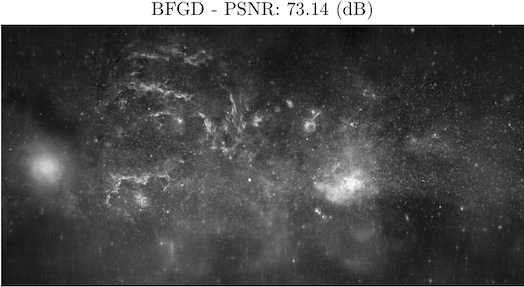}
\caption{Reconstruction performance in image denoising settings. The image size is $4862 \times 9725$ ($47,282,950$ pixels) and the approximation rank is preset to $r = 100$. We observe 30\% of the pixels of the true image. We depict the median reconstruction error with respect to the true image in dB over 10 Monte Carlo
realizations.}
\label{fig:app_exp4}
\end{figure*}

\begin{table*}[!h]
\centering
\ra{1.3}
\begin{scriptsize}
\rowcolors{2}{white}{black!05!white}
\begin{tabular}{c c c c c c c} \toprule
& \phantom{a} & \multicolumn{5}{c}{Time (sec.)} \\
\cmidrule {3-7}
Algorithm & & UT Campus &  & Graffiti & & Milky way \\	   
\cmidrule{1-1} \cmidrule {3-3} \cmidrule{5-5} \cmidrule{7-7} 
%\texttt{Matrix ALPS II} & & 2550.2 & & 2495.9 & & 7332.5 \\
\texttt{SVP} & & 5224.1 & & 4154.9 & & 7921.4 \\
\texttt{Procrustes Flow} & &  5383.4 & & 6501.4 & &  12806.3 \\ 
\texttt{BFGD} & &  4062.4 & &  3155.9 & & 9119.6  \\ 
\bottomrule
\end{tabular}
\end{scriptsize}
\caption{Summary of execution time results for the problem of image denoising. Timings correspond to median values on 10 Monte Carlo random instantiations. } \label{tbl:Comp6}
\end{table*}

\subsection{1-bit Matrix Completion}{\label{sec:1-bit}}

For this task, we repeat the experiments in \citep{davenport20141} and compare \algo with their proposed schemes.
The observational model we consider here satisfies the following principles: 
We assume $\Xopt \in \R^{m \times n}$ is an unknown low rank matrix, satisfying $\|\Xopt\|_\infty \leq \alpha, ~\alpha > 0$, from which we observe only a subset of indices $\Omega \subset [m] \times [n]$, according to the following rule:
\begin{align}
Y_{i, j} = \left\{
	\begin{array}{ll}
		+1  & \mbox{with probability } \sigma(\Xopt_{i, j}) \\
		-1 & \mbox{with probability } 1- \sigma(\Xopt_{i, j})
	\end{array}
\right. \quad \quad \text{for } (i, j) \in \Omega. \label{exp:bit}
\end{align} 
Similar to classic matrix completion results, we assume $\Omega$ is chosen uniformly at random, \emph{e.g.}, we assume $\Omega$ follows a binomial model, as in \citep{davenport20141}. 
Two natural choices for $\sigma$ function are: $(i)$ the logistic regression model, where $\sigma(x) = \tfrac{e^x}{1 + e^x}$, and $(ii)$ the probit regression model, where $\sigma(x) = 1 - \Phi(-x/\sigma)$ for $\Phi$ being the cumulative Gaussian distribution function. 
Both models correspond to different noise assumptions: in the first case, noise is modeled according to the standard logistic distribution, while in the second case, noise follows standard Gaussian assumptions.
Under this model, \citep{davenport20141} propose two convex relaxation algorithmic solutions to recover $\Xopt$: $(i)$ the convex maximum log-likelihood estimator under nuclear norm and infinity norm constraints:
\begin{equation}{\label{eq:exp_cvx_bit}}
\begin{aligned}
	& \underset{X \in \R^{m \times n}}{\text{minimize}}
	& & f(X),
	\\
	& \text{subject to}
	& & \|\X\|_* \leq \alpha \sqrt{r m n}, ~~ \|\X\|_\infty \leq \alpha,
\end{aligned} 
\end{equation} and $(ii)$ the the convex maximum log-likelihood estimator under only nuclear norm constraints. 
In both cases, $f(\X)$ satisfies the expression in \eqref{logPCA:eq_00}.
\citep{davenport20141} proposes a \emph{spectral projected-gradient descent} method for both these criteria; in the case where only nuclear norm constraints are present, SVD routines compute the convex projection onto norm balls, while in the case where both nuclear and infinity norm constraints are present, \citep{davenport20141} propose a alternating-direction method of multipliers (ADMM) solution, in order to compute the joint projection onto these sets.

\medskip
\noindent \textit{Synthetic experiments.}
We synthetically construct $\Xopt \in \R^{m \times n}$, where $m = n = 100$, such that $\Xopt = \U^\star \V^{\star \top}$, where $\U^\star \in \R^{m \times r}$, $\V^\star \in \R^{n \times r}$ for $r = 1$. 
The entries of $\U^\star$, $\V^\star$ are drawn i.i.d. from $\text{Uni}\left[-\tfrac{1}{2}, ~\tfrac{1}{2}\right]$.
Moreover, according to \citep{davenport20141}, we scale $\Xopt$ such that $\|\Xopt\|_\infty = 1$.
Then, we observe $Y \in \R^{m \times n}$ according to \eqref{exp:bit}, where $|\Omega| = \tfrac{1}{4}\cdot m n$. 
we consider the probit regression model with additive Gaussian noise, with variance $\sigma^2$. 

\begin{figure*}
\centering
\includegraphics[width=0.4\textwidth]{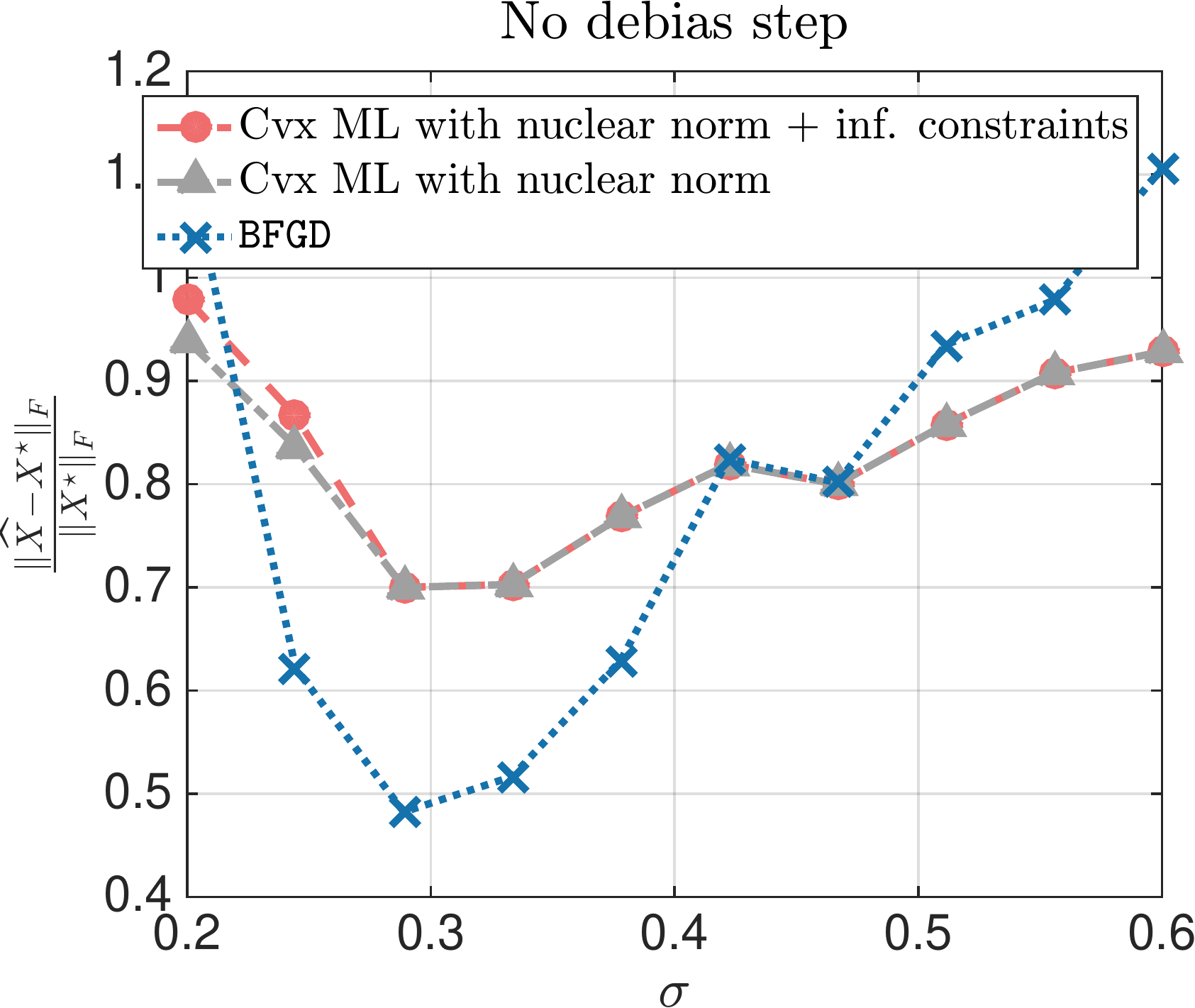}
\includegraphics[width=0.4\textwidth]{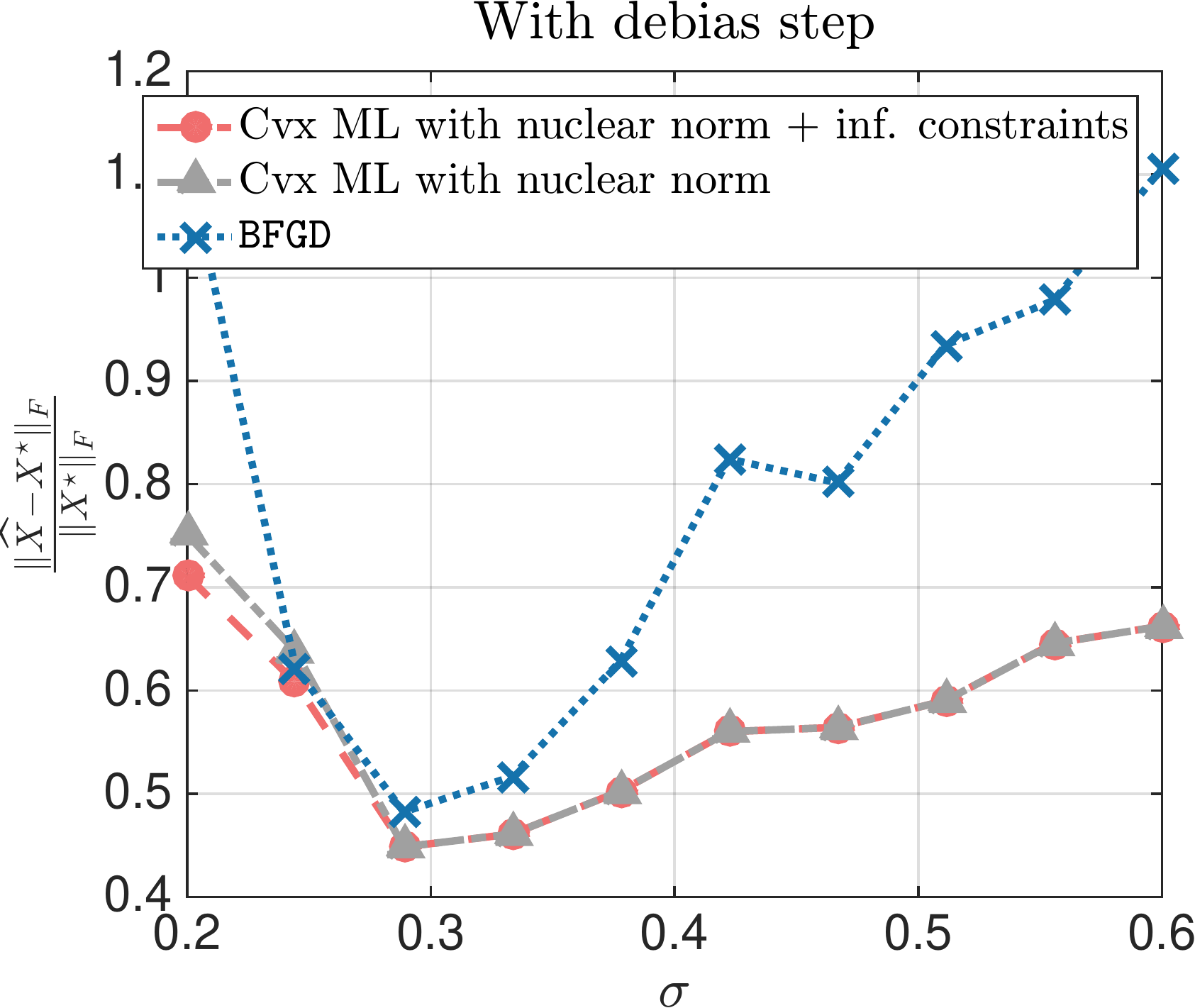}
\caption{Comparison of 1-bit matrix procedures. \textit{Left panel:} Output of \eqref{eq:exp_cvx_bit} is not projected onto rank-$r$ set.  \textit{Right panel:} Output of \eqref{eq:exp_cvx_bit} is projected onto rank-$r$ set.  }\label{fig:exp_bit_1}
\end{figure*}

Figure \ref{fig:exp_bit_1} depicts the recovery performance of \algo, as compared to variants of \eqref{eq:exp_cvx_bit}  in \citep{davenport20141}.
We consider their performance over different noise levels w.r.t. the normalized Frobenius norm distance $\tfrac{\|\widehat{X} - \Xopt\|_F}{\|\Xopt\|_F}$.
As noted in \citep{davenport20141}, the performance of all algorithms is poor when $\sigma$ is too small or too large, while in between, for moderate noise levels, we observe better performance for all approaches. 

By default, in all problem settings, we observe that the estimate of \eqref{eq:exp_cvx_bit} \emph{is not of low rank}: 
to compute the closest rank-$r$ approximation to that, we further perform a \emph{debias} step via truncated SVD. 
The effect of the debias step is better illustrated in Figure \ref{fig:exp_bit_1}, focusing on the differences between left and right plot: without such step, \algo has a better performance in terms of $\tfrac{\|\widehat{X} - \Xopt\|_F}{\|\Xopt\|_F}$, within the ``sweet" range of noise levels, compared to the convex analog in \eqref{eq:exp_cvx_bit}. 
Applying the debias step, both approaches have comparable performance, with that of \eqref{eq:exp_cvx_bit} being slightly better.

Perhaps somewhat surprisingly, the performance of \algo, in terms of \emph{estimating the correct sign pattern} of the entries, is better than that of \citep{davenport20141}, even with the debias step. 
Figure \ref{fig:exp_bit_2} (left panel) illustrates the observed performances for various noise levels.

\begin{figure}[!h]
\centering
		\includegraphics[width=0.52\textwidth]{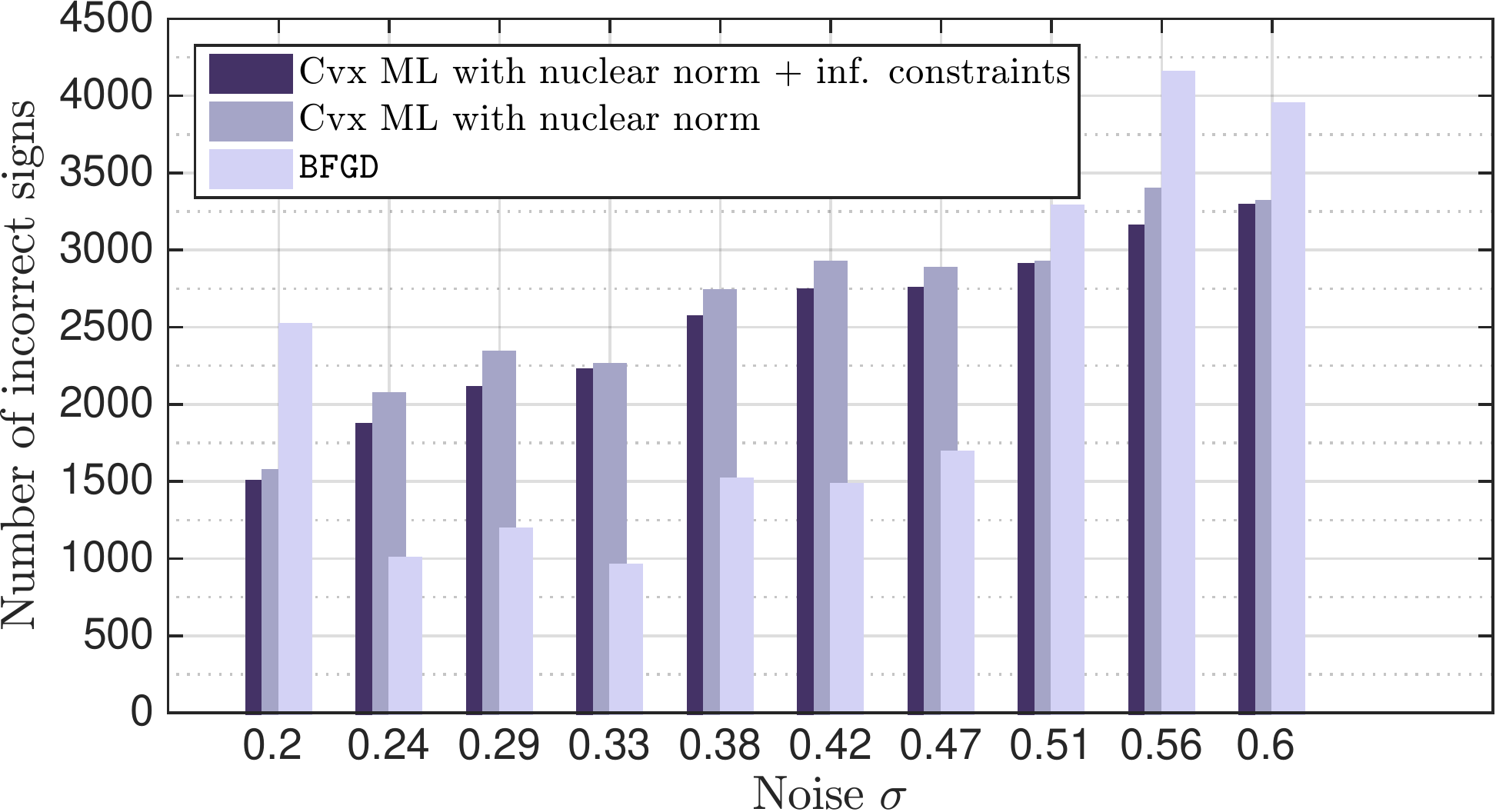}	
		\includegraphics[width=0.38\textwidth]{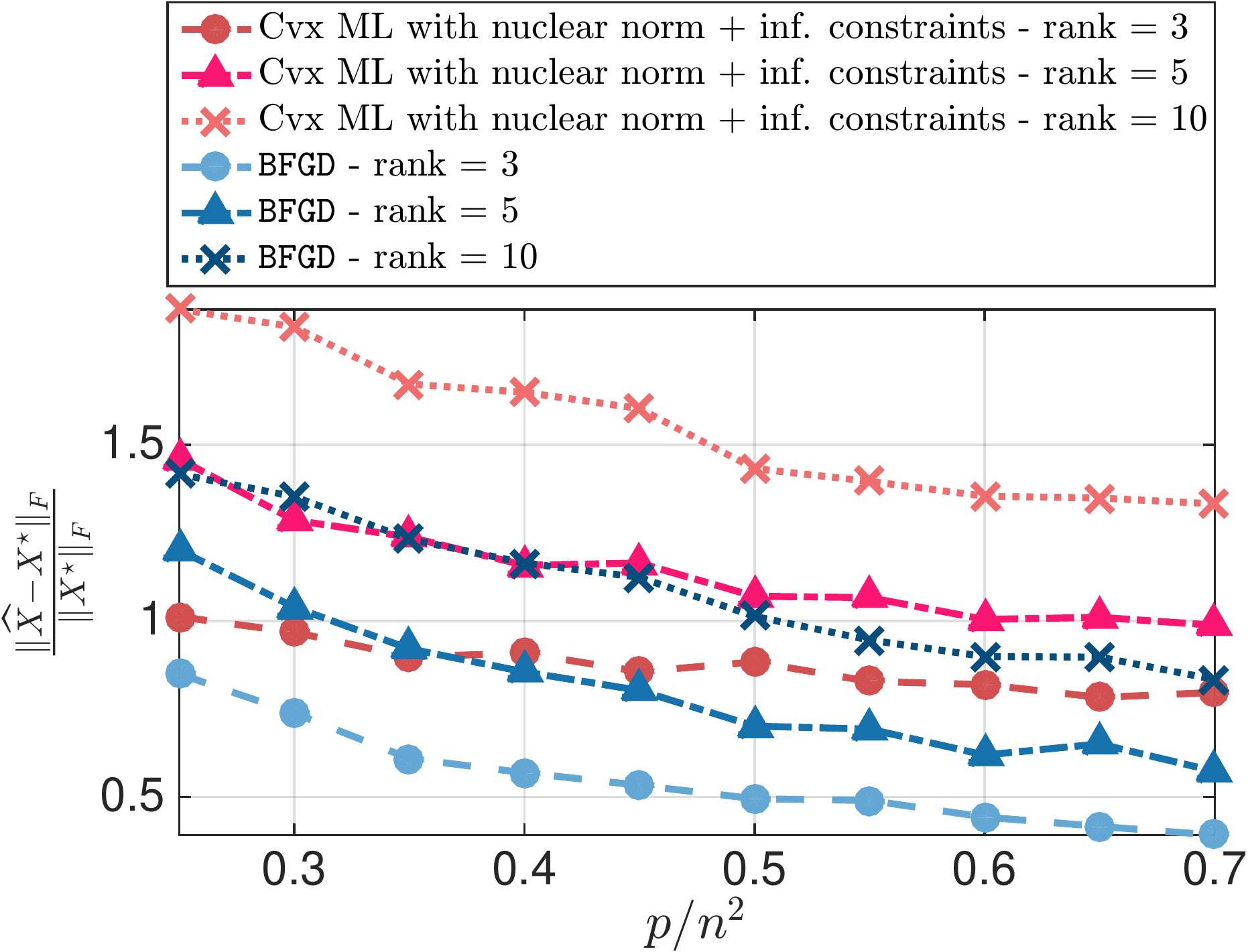}
\caption{\emph{Left panel:} Comparison of 1-bit matrix procedures w.r.t. sign pattern estimation. \emph{Right panel:} Recovery of $\Xopt$ from $p = C \cdot n^2$ measurements. $\Xopt$ is designed to be low rank: $r = 3, 5$ and $10$. x-axis represents $C$ for various values. }\label{fig:exp_bit_2}
\end{figure}

Finally, we study the performance of the algorithms under consideration as a function of the number of measurements, for fixed settings of dimensions $m = n = 200$ and noise level $\sigma = 0.244$. 
By the discussion above, such noise level leads to good performance from all schemes.
We considered matrices $\Xopt$ with rank $r \in \left\{3, ~5, ~10\right\}$ and generate $p = C \cdot n^2$, over a wide range of $0 < C < 1$. 
Figure \ref{fig:exp_bit_2} (right panel) shows the performance of \algo and the approach for \eqref{eq:exp_cvx_bit} in \citep{davenport20141}, in terms of the relative Frobenius norm of the error.
All approaches do poorly when there are only $p < 0.35 \cdot n^2$ measurements, since this is near the noiseless information-theoretic limit. 
For higher numbers of measurements, the non-convex approach in \algo returns more reasonable solutions and outperforms convex approaches, taking advantage of the prior knowledge on low-rankness of the solution.

\medskip
\noindent \textit{Recommendation system using the MovieLens data set.}
We compare 1-bit matrix completion solvers on the 100k MovieLens data set. 
To do so, we repeat the experiment in Section 4.3 of \citep{davenport20141}: we use the MovieLens 100k, which consists
of 100k movie ratings, from 1000 users on 1700 movies.
Each user entry denotes the movie rating, ranging from 1 to 5. 
To convert this data set into 1-bit measurements, we convert these ratings to binary observations by comparing each rating
to the average rating for the entire data set (which is approximately 3.5), according to \citep{davenport20141}. 
To evaluate the performance of the algorithms, we assume part of the observed ratings as unobserved (5k of them) and check if the estimate of $\Xopt$, $\widehat{\X}$, predicts the sign of these ratings. 
We perform ML estimation using logistic function $\sigma(x) = \tfrac{e^x}{1+ e^x}$ in $f$.

We compare the following algorithms: $(i)$ the spectral projected gradient descent (\texttt{SPG}) implementation of \eqref{eq:exp_cvx_bit} in \citep{davenport20141} for 1-bit matrix completion, $(ii)$ the standard matrix completion implementation \texttt{TFOCS} \citep{becker2011templates}, where we observe the \emph{unquantized} data set (actual values)\footnote{Using \texttt{TFOCS}, we set the regularizer $\mu = 10^{-3}$ as the parameter value that returned the best recovery results over a wide range of $\mu$ values.}, $(iii)$ \algo for various values of rank parameter $r$. 
The results are shown Table \ref{tbl:Comp7} over 10 Monte Carlo realizations (\emph{i.e.}, we randomly selected 5k ratings as test sets and solved the problem for different runs of the algorithms). 
The values in Table \ref{tbl:Comp7} denote the accuracy in predicting whether the unobserved ratings are above or below the average rating of 3.5.
\algo shows competitive performance, compared to convex approaches.
Moreover, setting the parameter $r$ is an ``easier" and more intuitive task:
our algorithm administers precise control on the rankness of the solution, which might lead to further interpretation of the results. 
Convex approaches lack of this property: the mapping between the regularization parameters and the number of rank-1 components in the extracted solution is highly non-linear.
At the same time, \algo shows much faster convergence to a good solution, which constitutes it a preferable algorithmic solution for large scale applications.

\begin{table*}[!h]
\centering
\ra{1.3}
\begin{footnotesize}
\rowcolors{2}{white}{black!05!white}
\begin{tabular}{c c c c c c c c c c c c c c c} \toprule
& & \multicolumn{9}{c}{Ratings (\%)} & & Overall (\%)& & Time (sec) \\ 
 \cmidrule{3-11} 
Algorithm & & 1 & & 2 & & 3 & & 4 & & 5 & & \phantom{a} & & \phantom{a} \\ 
 \midrule 
\texttt{SPG} ($\alpha\sqrt{r} = 0.32$) & &  73.7 & &  68.4 & &  52.5 & &  \textbf{74.9} & &  \textbf{91.0} & &  71.3 & &  79.5 \\ 
\texttt{SPG} ($\alpha\sqrt{r} = 4.64$) & &  77.2 & &  71.0 & &  58.5 & &  72.5 & &  86.9 & &  71.8 & &  213.4 \\ 
\texttt{SPG} ($\alpha\sqrt{r} = 10.00$) & &  76.2 & &  71.3 & &  58.3 & &  71.0 & &  85.7 & &  71.0 & &  491.8 \\ 
\texttt{TFOCS} & & 70.4 & &  69.4 & &  \textbf{59.2} & &  39.1 & &  59.4 & &  64.8 & &  42.3 \\ 
\texttt{BFGD} ($r = 3$) & &  \textbf{79.4} & &  74.5 & &  56.9 & &  72.5 & &  88.2 & &  \textbf{72.2} & &  \textbf{25.4} \\ 
\texttt{BFGD} ($r = 5$) & &  79.0 & &  72.4 & &  56.8 & &  71.6 & &  86.2 & &  71.2 & &  27.5 \\ 
\texttt{BFGD} ($r = 10$) & &  77.6 & &  \textbf{75.0} & &  57.5 & &  70.5 & &  84.1 & &  70.9 & &  30.3 \\ 
 \bottomrule 
 \end{tabular}
\end{footnotesize}
\caption{Summary of results for the problem of 1-bit matrix completio on MovieLens data set. Individual and overall ratings correspond to percentages of signs correctly estimated (+1 corresponds to original rating above 3.5, -1 corresponds to original rating below 3.5). Timings correspond to median values on 10 Monte Carlo random instantiations. } \label{tbl:Comp7}
\end{table*}

% Acknowledgements should go at the end, before appendices and references

%\acks{We would like to acknowledge support for this project
%from the National Science Foundation (NSF XXX)
%and ...}

% Manual newpage inserted to improve layout of sample file - not
% needed in general before appendices/bibliography.

\appendix
\section{Appendix}{\label{sec:proof_equiv_eta}}

\textit{Proof of \eqref{eq:weta1}$\Rightarrow$\eqref{eq:new_step_size}}
Let $R^\star_t$ be the $r \times r$ orthogonal matrix such that $\dist(\U_t,\V_t;\Xoptr) = \normf{\W_t - \Wo R^\star_t}$. By the triangle inequality, we have
\begin{align} \label{eqn:norm_bound1}
\| \W_t \|_2 &= \| \W_t - \Wo R^\star_t + \Wo R_t^\star\|_2  \stackrel{(i)}{\leq} \| \Wo R^\star_t \|_2 + \| \W_t - \Wo R^\star_t \|_2 \nonumber \\
				&\stackrel{(ii)}{\leq} \| \Wo \|_2 + \tfrac{\sqrt{2} \sigma_r\left(\Xo_r\right)^{\sfrac{1}{2}}}{10} \nonumber \\
				&\stackrel{(iii)}{\leq} \| \Wo \|_2 + \tfrac{\sigma_r\left(\Wo\right)}{10} \nonumber \\
				&\leq \tfrac{11}{10} \cdot \|\Wo\|_2
\end{align}
where $(i)$ is due to triangle inequality, 
$(ii)$ is due to Assumption A.1
$(iii)$ is due to the fact that $\sqrt{2} \cdot \sigma_r(\Xo_r)^{\sfrac{1}{2}} = \sigma_r(\Wo)$ and $\kappa \geq 1$. 
The above bound holds for every $t = 0,1,\ldots$.

On the other hand, we have:
\begin{align} \label{eqn:norm_bound2}
\| \W_0 \|_2 &= \| \W_0 - \Wo R^\star_t + \Wo R^\star\|_2  \stackrel{(i)}{\geq} \| \Wo R^\star_t \|_2 - \| \W_0 - \Wo R^\star_t \|_2 \nonumber \\
				 &\geq \| \Wo \|_2 - \tfrac{\sqrt{2} \sigma_r \left( \Xo_r \right)^{\sfrac{1}{2}}}{10} \nonumber \\
				 &\geq \| \Wo \|_2 - \tfrac{\sigma_1 \left( \Wo \right)}{10} \nonumber \\
				 &\geq \tfrac{9}{10} \cdot \|\Wo\|_2
\end{align}
Combining \eqref{eqn:norm_bound1} and \eqref{eqn:norm_bound2}, we obtain:
\begin{align*}
\|\W_t \|_2 \leq \tfrac{11}{10} \cdot \|\Wo\|_2 \leq \tfrac{11}{9} \|\W_0\|_2   \Longrightarrow   \tfrac{81}{121} \cdot \|\W_t\|_2^2 \leq \|\W_0\|_2^2
\end{align*} and, finally, 
\begin{align*}
\tfrac{1}{8 \cdot \max\{L, ~L_g\} \cdot \|\W_t\|_2^2} = \tfrac{1}{8 \cdot \tfrac{121}{81} \cdot \max\{L, ~L_g\} \cdot \tfrac{81}{121} \cdot \|\W_t\|_2^2} \geq \tfrac{1}{12 \cdot \max\{L, ~L_g\} \cdot \|\W_0\|_2^2}
\end{align*} \hfill $\blacksquare$

\textit{Proof of \eqref{eq:weta}$\Rightarrow$\eqref{eq:new_step_size_smooth}}
We have
\begin{align}
\| \nabla f(\U_t\V_t^\top) \|_2 \nonumber
&\le \| \nabla f(\factorinit) \|_2 + \| \nabla f(\U_t\V_t^\top) - \nabla f(\factorinit) \|_2 \nonumber \\
&\stackrel{(i)}{\le} \| \nabla f(\factorinit) \|_2 + L \| \U_t\V_t^\top - \factorinit \|_F \nonumber \\
&\stackrel{(ii)}{\le} \| \nabla f(\factorinit) \|_2 + L \| \U_t\V_t^\top - \factoropt \|_F + L \| \factorinit - \factoropt \|_F \label{eq:equiv_eta_2}
\end{align} where $(i)$ is due to the fact that $f$ is $L$-smooth and, $(ii)$ holds by adding and subtracting $\U^\star \V^{\star\top}$ and then applying triangle inequality. To bound the last two terms on the right hand side, we observe:
\begin{align*}
\| \U_t\V_t^\top - \factoropt \|_F &= \|\U_t\V_t^\top - \U^\star R \V_t^\top + \U^\star R \V_t^\top - \Uo R R^\top {\Vo}^\top \|_F \\
&\stackrel{(i)}{\le} \normt{U^\star R} \cdot \normf{V_t - \Vs R} + \normt{V_t} \cdot \normf{U_t - \Uo R} \\
&\le \left(\normt{U^\star} + \normt{V_t}\right) \cdot \dist(U_t, V_t; \Xoptr) \\
&\stackrel{(ii)}{\le} \frac{21}{10} \cdot \normt{W^\star} \cdot \frac{\sigma_r(W^\star)}{10} \\
&\le \frac{7}{10} \cdot \normt{W_0}^2
\end{align*} where $(i)$ is due to the triangle and Cauchy-Schwartz inequalities, $(ii)$ is by Assumption A1 and \eqref{eqn:norm_bound1}. Similarly, one can show that $\| \factorinit - \factoropt \|_F \leq \frac{7}{10} \cdot \normt{W_0}^2$. Thus, \eqref{eq:equiv_eta_2} becomes:
\begin{align}
\| \nabla f(\factorinit) \|_2 \ge \| \nabla f(\U_t\V_t^\top) \|_2 - \frac{3L}{2} \normt{W_0}^2 \label{eq:equiv_eta_3}
\end{align}
Applying \eqref{eqn:norm_bound1}, \eqref{eqn:norm_bound2}, and the above bound, we obtain the desired result.
 \hfill $\blacksquare$

\section{Appendix} \label{sec:proof_linear}
For clarity, we omit the subscript $t$, and use $(\U,~\V)$ to denote the current estimate and $(\U^+, \V^+)$ the next estimate. 
Further, we abuse the notation by denoting $\nabla g \triangleq \nabla g (\U^\top \U - \V^\top \V)$, where the gradient is taken over both $U$ and $V$. 
We denote the stacked matrices of $(\U,\V)$ and their variants as follows:
$$
\W = \Wfac,~ \W^+ = \begin{bmatrix} \U^+ \\ \V^+ \end{bmatrix} ,~ \Wo = \Wofac.
$$
Observe that $\W, \W^+, \Wo \in \R^{(m + n) \times r}$. 
Then, the main recursion of \algo in Algorithm \ref{alg:BFGD} can be succinctly written as
\begin{align*}
\Wp = \W - \weta \nabla_W (f + \tfrac{1}{2} g),
\end{align*}
where
\begin{align*}
\nabla_W (f + \tfrac{1}{2} g)
=
\begin{bmatrix}
\gradU + \tfrac{1}{2} \nabla_U g \\
\gradV + \tfrac{1}{2} \nabla_V g \\
\end{bmatrix}
=
\begin{bmatrix}
\gradf(\factor) V + \tfrac{1}{2} U \nabla g \\
\gradf(\factor)^\top U - \tfrac{1}{2} V \nabla g \\
\end{bmatrix}.
\end{align*}
In the above formulations, we use as regularizer of $g$ function $\lambda = \tfrac{1}{2}$.

Our discussion below is based on the Assumption A.1, where:
\begin{align} \label{eqn:condition}
\dist(\U,\V;\Xoptr) \le \frac{\sqrt{2} \cdot \sigma_r(\Xoptr)^{1/2}}{10 \sqrt{\kappa}} = \frac{\sigma_r(\Wo)}{10 \sqrt{\kappa}},
\end{align}
holds for the current iterate. 
The last equality is due to the fact that $\sigma_r(\Wo) = \sqrt{2} \cdot \sigma_r(\Xoptr)^{1/2}$, for $(\Uo, \Vo)$ with ``equal footing".
For the initial point $(\Uinit,\Vinit)$, \eqref{eqn:condition} holds by the assumption of the theorem. 
Since the right hand side is fixed, \eqref{eqn:condition} holds for every iterate, as long as $\dist(\U,\V;\Xoptr)$ decreases. 

To show this, let $R \in O_r$ be the minimizing orthogonal matrix such that $\dist(\U,\V;\Xoptr) = \normf{W - \Wo R}$; here, $O_r$ denotes the set of $r \times r$ orthogonal matrices such that $R^\top R = I$.
Then, the decrease in distance can be lower bounded by
\begin{align} \label{eqn:decrease}
\dist(\U,\V;\Xoptr)^2 &- \dist(\Up,\Vp;\Xoptr)^2 \nonumber \\ &= \normf{\W - \Wo R}^2 - \min_{Q \in O_r} \normf{\W^+ - \Wo Q}^2 \nonumber \\
																&\ge \normf{\W - \Wo R}^2 - \normf{\W^+ - \Wo R}^2 \nonumber \\
																&= 2 \weta \cdot \innerp{\nabla_W (f + \tfrac{1}{2}g)}{\W - \Wo R}  - \weta^2 \cdot \normf{\nabla_W (f + \tfrac{1}{2}g)}^2
% &= 2 \eta \cdot \tagterm{(A)}{\left\{ \innerp{\gradf(\factor) V}{\U - \Uh R} 
%                                    + \innerp{\gradf(\factor)^\top U}{\V - \Vh R} \right\}} \nonumber \\
% &\quad + \eta \cdot \tagterm{(B)}{\left\{ \innerp{\U \nabla g}{\U - \Uh R}
%                        - \innerp{\V \nabla g}{\V - \Vh R} \right\}} \nonumber \\                       
% &\quad - \eta^2 \cdot \tagterm{(B)}{\left\{\normf{\gradf(\factor)\V + \frac{1}{2} \U \nabla g}^2
%                                        + \normf{\gradf(\factor)^\top\U + \frac{1}{2} \V \nabla g}^2 \right\}}
\end{align}
where the last equality is obtaining by substituting $\W^+$, according to its definition above.
To bound the first term on the right hand side, we use the following lemma; the proof is provided in Section \ref{sec:proof_lindescent}.
\begin{lemma}[Descent lemma] \label{lem:linconv_descent}
Suppose \eqref{eqn:condition} holds for $\W$. Let $\mumin = \min \left\{\mu,~\mu_g\right\}$ and $L_{\max} = \max \left\{L, ~L_g\right\}$ for $(\mu, L)$ and $(\mu_g, L_g)$ the strong convexity and smoothness parameters pairs for $f$ and $g$, respectively.
Then, the following inequality holds:
\begin{align} \label{eqn:bound_AB_final}
\innerp{\nabla_{\W}(f + \tfrac{1}{2} g)}{\W - \Wo R} &\ge \tfrac{\mumin \cdot \sigma_r(\Wo)^2}{20} \normf{\W - \Wo R}^2 + \tfrac{1}{4L_{\max}} \normf{\gradf(\factor)}^2 \nonumber \\ &+ \tfrac{1}{16L_{\max}} \normf{\nabla g}^2 - \tfrac{L}{2} \normf{\Xopt - \Xoptr}^2
\end{align}
\end{lemma}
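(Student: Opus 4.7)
The plan is to expand $\innerp{\nabla_\W(\f + \tfrac{1}{2}\g)}{\W - \Wo R}$ into an $f$-contribution and a $g$-contribution, lower bound each piece using the (restricted) strong convexity and smoothness of $f$ and $g$ respectively, and then convert the resulting $X$-space bounds into a bound on $\normf{\W - \Wo R}^2$ via a Procrustes-flow style inequality (see \citep{tu2016low}). Throughout, the initialization bound \eqref{eqn:condition} is used to absorb higher-order cross terms.

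For the $f$-part, write $\Delta_U := \U - \Uo R$ and $\Delta_V := \V - \Vo R$. Using the identity
\[
2\U\V^\top - \Uo R \V^\top - \U(\Vo)^\top R^\top = \left(\U\V^\top - \Xoptr\right) + \Delta_U \Delta_V^\top,
\]
which relies on $\Uo(\Vo)^\top = \Xoptr$ being invariant under the choice of orthogonal $R$, I would rewrite
\[
\innerp{\nabla_\W \f}{\W - \Wo R} = \innerp{\gradf(\factor)}{\factor - \Xoptr} + \innerp{\gradf(\factor)}{\Delta_U \Delta_V^\top}.
\]
On the first term, splitting $\factor - \Xoptr = (\factor - \Xopt) + (\Xopt - \Xoptr)$ and combining $\mu$-strong convexity with co-coercivity from $L$-smoothness (using $\gradf(\Xopt) = 0$) gives a lower bound of $\tfrac{\mu}{2}\normf{\factor - \Xopt}^2 + \tfrac{1}{2L}\normf{\gradf(\factor)}^2$, while the tolerance piece $\innerp{\gradf(\factor)}{\Xopt - \Xoptr}$ is controlled by Young's inequality to produce the $-\tfrac{L}{2}\normf{\Xopt - \Xoptr}^2$ term (at the cost of debiting $\tfrac{1}{4L}\normf{\gradf(\factor)}^2$). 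The residual cross term $\innerp{\gradf(\factor)}{\Delta_U \Delta_V^\top}$ is bounded by $\normt{\gradf(\factor)} \cdot \normf{\Delta_U \Delta_V^\top}$ and absorbed using the fact that, by \eqref{eqn:condition}, $\normf{\Delta_U \Delta_V^\top}$ is much smaller than $\sigma_r(\Wo)^2 \cdot \normf{\W - \Wo R}$.

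For the $g$-part, observe that because $(\Uo,\Vo) \in \mathcal{X}^\star_r$ is balanced, $(\Uo)^\top \Uo - (\Vo)^\top \Vo = 0$, and hence $\nabla g$ evaluated there equals $\nabla g(0) = 0$. Expanding with $\U = \Uo R + \Delta_U$ and $\V = \Vo R + \Delta_V$, the leading term is $\innerp{\nabla g(\U^\top\U - \V^\top\V)}{\U^\top\U - \V^\top\V}$, to which $\mu_g$-strong convexity plus $L_g$-smoothness (co-coercivity) applies, delivering $\tfrac{\mu_g}{2}\normf{\U^\top\U - \V^\top\V}^2 + \tfrac{1}{2L_g}\normf{\nabla g}^2$; the remaining quadratic-in-$\Delta$ terms are again controllable by \eqref{eqn:condition}. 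Summing the two contributions, I would then apply the Procrustes-flow inequality
\[
\normf{\factor - \Xoptr}^2 + \tfrac{1}{4}\normf{\U^\top \U - \V^\top \V}^2 \;\ge\; c \cdot \sigma_r(\Wo)^2 \cdot \normf{\W - \Wo R}^2
\]
(for an explicit constant $c$, e.g.\ derived as in \citep{tu2016low}) to convert the $X$-space and $Z$-space lower bounds into the desired $\tfrac{\mumin \cdot \sigma_r(\Wo)^2}{20}\normf{\W - \Wo R}^2$ term, while the gradient pieces consolidate into $\tfrac{1}{4L_{\max}}\normf{\gradf(\factor)}^2 + \tfrac{1}{16L_{\max}}\normf{\nabla g}^2$ after budgeting losses to Young's inequality.

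The main obstacle is bookkeeping the constants: the residual cross terms $\innerp{\gradf(\factor)}{\Delta_U \Delta_V^\top}$ and the analogous $g$-side terms are only quadratic in $\normf{\W - \Wo R}$, so they compete directly with the main $\sigma_r(\Wo)^2 \normf{\W - \Wo R}^2$ term. Making them a strict fraction of it requires the initialization radius to scale like $\sigma_r(\Wo)/\sqrt{\kappa}$, which is precisely the content of \eqref{eqn:condition}; verifying that the numerical constants ($\tfrac{1}{20}$, $\tfrac{1}{4}$, $\tfrac{1}{16}$) in the statement are simultaneously achievable after this absorption, while also leaving enough of the gradient-squared terms to dominate $\weta \normf{\nabla_\W(\f + \tfrac{1}{2}\g)}^2$ in the outer descent step \eqref{eqn:decrease}, is the delicate part of the argument.
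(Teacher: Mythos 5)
Your proposal follows essentially the same route as the paper's proof: the same split into an $f$-term $\innerp{\gradf(\factor)}{\factor-\Xoptr}+\innerp{\gradf(\factor)}{\Delta_U\Delta_V^\top}$ and a $g$-term anchored by the balancedness ${\Uo}^\top\Uo-{\Vo}^\top\Vo=0$, strong convexity plus co-coercivity on each, the Procrustes-type inequality combining $\normf{\factor-\Xoptr}^2$ with $\normf{U^\top U-V^\top V}^2$ to recover $\sigma_r(\Wo)^2\normf{\W-\Wo R}^2$, and absorption of the cross terms via \eqref{eqn:condition} and AM--GM. The only cosmetic deviations are that the paper applies strong convexity directly toward $\factoropt=\Xoptr$ and extracts the $-\tfrac{L}{2}\normf{\Xopt-\Xoptr}^2$ tolerance through function values ($f(\factoropt)-f(\Xopt)\le\tfrac{L}{2}\normf{\Xopt-\factoropt}^2$) rather than through Young's inequality on $\innerp{\gradf(\factor)}{\Xopt-\Xoptr}$; both yield the stated constants.
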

For the second term on the right hand side of \eqref{eqn:decrease}, we obtain the following upper bound:
\begin{align} \label{eqn:bound_C}
\normf{\nabla_W (f + \tfrac{1}{2}g)}^2 &= \left\| \begin{bmatrix} 
																		\gradf(\factor) V + \tfrac{1}{2} U \nabla g \\
																		\gradf(\factor)^\top U - \tfrac{1}{2} V \nabla g \\
																   \end{bmatrix} \right\|_F^2 \nonumber \\
													  &= \normf{\gradf(\factor) V + \tfrac{1}{2} \U \nabla g}^2  + \normf{\gradf(\factor)^\top U - \tfrac{1}{2} \V \nabla g}^2 \nonumber \\
													  &\stackrel{(a)}{\le} 2\normf{\gradf(\factor) V}^2 + \tfrac{1}{2} \normf{\U \nabla g}^2 + 2\normf{\gradf(\factor)^\top U}^2 + \tfrac{1}{2} \normf{\V \nabla g}^2 \nonumber \\
													  &= 2\normf{\gradf(\factor) V}^2 + 2\normf{\gradf(\factor)^\top U}^2 + \tfrac{1}{2} \normf{W \nabla g}^2 \nonumber \\
													  &\stackrel{(b)}{\le} 2\normf{\gradf(\factor)}^2 \cdot \left(\normt{U}^2 + \normt{V}^2\right) + \tfrac{1}{2} \normt{W}^2 \normf{\nabla g}^2 \nonumber \\
													  &\stackrel{(c)}{\le} \left(4\normf{\gradf(\factor)}^2 + \tfrac{1}{2} \normf{\nabla g}^2\right) \cdot \normt{W}^2,
\end{align}
where $(a)$ follows from the fact $\normf{A+B}^2 \le 2\normf{A}^2 + 2\normf{B}^2$,
$(b)$ is due to the fact $\normf{AB} \le \normf{A} \cdot \normt{B}$, 
and $(c)$ follows from the observation that $\normt{U},\normt{V} \le \normt{W}$. 

Plugging \eqref{eqn:bound_AB_final} and \eqref{eqn:bound_C} in \eqref{eqn:decrease}, we get
\begin{align*}
\dist(\U,\V;\Xoptr)^2 &- \dist(\Up,\Vp;\Xoptr)^2 \nonumber \\ &\ge 2 \weta \cdot \innerp{\nabla_W (f + \tfrac{1}{2}g)}{\W - \Wo R}
 - \weta^2 \cdot \normf{\nabla_W (f + \tfrac{1}{2}g)}^2 \\
&\ge \tfrac{\weta \cdot \mumin \cdot \sigma_r(\Wo)^2}{10} \dist(\U,\V;\Xoptr)^2 - \weta L \normf{\Xo - \Xoptr}^2 \\
&= \tfrac{\weta \cdot \mumin \cdot \sigma_r(\Xoptr)}{5} \dist(\U,\V;\Xoptr)^2 - \weta L \normf{\Xo - \Xoptr}^2
\end{align*} where we use the fact that $\sigma_r(\Wo) = \sqrt{2} \cdot \sigma_r(\Xoptr)^{1/2}$. 

The above lead to the following recursion:
\begin{align*}
\dist(\Up, \Vp;\Xoptr)^2 \leq \gamma_t \cdot \dist(\U,\V;\Xoptr)^2 + \weta L \normf{\Xo - \Xoptr}^2,
\end{align*}
where $\gamma_t = 1 - \tfrac{\weta \cdot \mumin \cdot \sigma_r(\Xoptr)}{5}$. 
By the definition of $\weta$ in \eqref{eq:weta1}, we further have:
\begin{align*}
\gamma_t &= 1 - \tfrac{\mumin \cdot \sigma_r(\Xoptr)}{40 \cdot L_{\max} \cdot \|\W\|_2^2} \\
			    &\stackrel{(i)}{\geq} 1 - \tfrac{\mumin \cdot \sigma_r(\Xoptr)}{40 \cdot L_{\max} \cdot \tfrac{100}{81} \cdot \|\Wo\|_2^2} \\
			    &\stackrel{(ii)}{\geq} 1 - \tfrac{\mumin}{17 \cdot L_{\max}} \cdot \tfrac{\sigma_r(\Xoptr)}{\sigma_1(\Xoptr)}
\end{align*}
where $(i)$ is by using \eqref{eqn:norm_bound2} that connects $\|\W\|_2$ with $\|\Wo\|_2$ as $\|\W\|_2 \geq \tfrac{9}{10} \|\Wo\|_2$, and 
$(ii)$ is due to the fact $\|\Wo\|_2 = \sqrt{2} \cdot \sigma_1(\Xoptr)^{1/2}$.  \hfill $\blacksquare$

\subsection*{B.1. Proof of Lemma \ref{lem:linconv_descent}}{\label{sec:proof_lindescent}}
Before we step into the proof, we require a few more notations for simpler presentation of our ideas. 
We use another set of stacked matrices
$
Y = \begin{bmatrix} \U \\ -\V \end{bmatrix} ,~
Y^\star = \begin{bmatrix} \Uo \\ -\Vo \end{bmatrix}.
$
The error of the current estimate from the closest optimal point is denoted by the following $\Delta_{\times}$ matrix structures:
$$
\Delta_U = U - \Uo R ,\quad
\Delta_V = V - \Vo R ,\quad
\Delta_W = W - \Wo R ,\quad
\Delta_Y = Y - \Yo R.
$$

For our proof, we can write
\begin{small}
\begin{align}
\innerp{\nabla_W (f + \tfrac{1}{2}g)}{\W - \Wo R} &= \tagterm{(A)}{\innerp{\gradf(\factor) V}{\U - \Uo R} + \innerp{\gradf(\factor)^\top U}{\V - \Vo R}} \nonumber \\ &\quad \quad \quad \quad \quad \quad \quad \quad \quad \quad \quad + \tfrac{1}{2} \cdot \left(\tagterm{(B)}{\innerp{\U \nabla g}{\U - \Uo R} - \innerp{\V \nabla g}{\V - \Vo R}}\right) \nonumber                  
\end{align}
\end{small}

For (A), we have
\begin{align}
(A)
&= \innerp{\gradf(\factor) V}{\U - \Uo R} + \innerp{\gradf(\factor)^\top U}{\V - \Vo R} \nonumber \\
&= \innerp{\gradf(\factor)}{\factor - \factoropt} + \innerp{\gradf(\factor)}{\Delta_U \Delta_V^\top} \label{eq:A_term}\\
&\ge \frac{\mu}{2} \tagterm{(A1)}{\normf{\factor - \factoropt}^2}
 + \frac{1}{2L} \tagterm{(A2)}{\normf{\gradf(\factor)}^2}
 - \frac{L}{2} \tagterm{(A3)}{\normf{\Xo - \Xoptr}^2}
 - \tagterm{(A4)}{\normt{\gradf(\factor)} \cdot \normf{\Delta_W}^2} \nonumber
\end{align}
where, for the second term in \eqref{eq:A_term}, we use the fact that $$\innerp{\gradf(\factor)}{\Delta_U \Delta_V^\top} \geq - \left|\innerp{\gradf(\factor)}{\Delta_U \Delta_V^\top}\right| = - \left|\innerp{\gradf(\factor) \Delta_V}{\Delta_U}\right|,$$ the Cauchy-Schwarz inequality and the fact that $\|\Delta_U\|_F, \|\Delta_V\|_F \leq \|\Delta_\W\|_F$; the first term in \eqref{eq:A_term} follows from:
\begin{align*}
&\innerp{\gradf(\factor)}{\factor - \factoropt}
\stackrel{(i)}{\ge} f(\factor) - f(\factoropt) + \frac{\mu}{2} \normf{\factor - \factoropt}^2 \\
&\stackrel{(ii)}{=} (f(\factor) - f(\Xopt)) - (f(\factoropt) - f(\Xopt)) + \frac{\mu}{2} \normf{\factor - \factoropt}^2 \\
&\stackrel{(iii)}{\ge} \frac{1}{2L} \normf{\gradf(\factor)}^2 - \frac{L}{2} \normf{\Xopt - \factoropt}^2 + \frac{\mu}{2} \normf{\factor - \factoropt}^2.
\end{align*}
where $(i)$ is due to the $\mu$-strong convexity of $f$, 
$(ii)$ is by adding and subtracting $f(\Xopt)$; observe that $f(\Xopt) = f(\factoropt)$ if and only if $\text{rank}(\Xopt) = r$, 
and $(iii)$ is due to the $L$-smoothness of $f$ and the fact that $\nabla f(\Xopt) = 0$ (for the middle term), and due to the inequality \cite[eq. (2.1.7)]{Nesterov} (for the first term):
\begin{align}
f(\X) + \langle \nabla f(\X), \Y - \X \rangle + \tfrac{1}{2L} \cdot \|\nabla f(\X) - \nabla f(\Y)\|_F^2 \leq f(\Y). \label{eq:Lipschitz}
\end{align}

For (B), we have
\begin{align}
(B)
	&= \innerp{Y \nabla g}{W - \Wo R} = \innerp{\nabla g}{Y^\top W - Y^\top \Wo R} \nonumber  \\
	&= \tfrac{1}{2} \innerp{\nabla g}{Y^\top W - R^\top {\Yo}^\top \Wo R}  + \tfrac{1}{2} \innerp{\nabla g}{Y^\top W - 2 Y^\top \Wo R + R^\top {\Yo}^\top \Wo R} \nonumber \\
	&\stackrel{(a)}{=} \tfrac{1}{2} \innerp{\nabla g}{Y^\top W} + \tfrac{1}{2} \innerp{\nabla g}{Y^\top W - Y^\top \Wo R - R^\top {\Yo}^\top W + R^\top {\Yo}^\top \Wo R} \nonumber \\
	&= \frac{1}{2} \innerp{\nabla g}{U^\top U - V^\top V} + \frac{1}{2} \innerp{\nabla g}{\Delta_Y^\top \Delta_W} \label{eq:B_term} \\
	&\stackrel{(b)}{\ge} \frac{\mu_g}{4} \tagterm{(B1)}{\normf{U^\top U - V^\top V}^2} + \frac{1}{4L_g} \tagterm{(B2)}{\normf{\nabla g}^2} - \frac{1}{2} \tagterm{(B3)}{\normt{\nabla g} \cdot \normf{\Delta_W} \cdot \normf{\Delta_Y}} \nonumber
\end{align}
where $(a)$ follows from the ``balance" assumption in $\mathcal{X}_r^\star$:
\begin{align*}
{\Yo}^\top \Wo &= {\Uo}^\top \Uo - {\Vo}^\top \Vo = 0,
\end{align*}
for the first term, and the fact that $\nabla g$ is symmetric, and therefore
\begin{align*}
\innerp{\nabla g}{Y^\top \Wo R} &= \innerp{\nabla g}{R^\top {\Wo}^\top Y} = \innerp{\nabla g}{R^\top {\Yo}^\top W},
\end{align*}
for the second term;
$(b)$ follows from the fact $$\innerp{\nabla g}{\Delta_Y^\top \Delta_W}  \geq - \left|\innerp{\nabla g}{\Delta_Y^\top \Delta_W} \right| = - \left|\innerp{\Delta_Y \nabla g}{\Delta_W} \right|,$$ and the Cauchy-Schwarz inequality on the second term in \eqref{eq:B_term}, and
\begin{align*}
\innerp{\nabla g}{U^\top U - V^\top V} &\stackrel{(i)}{\ge} g(U^\top U - V^\top V) - g(0) + \tfrac{\mu_g}{2} \normf{U^\top U - V^\top V}^2 \\
&\stackrel{(ii)}{\ge} \innerp{\nabla g(0)}{U^\top U - V^\top V} + \tfrac{1}{2L_g} \normf{\nabla g - \nabla g(0)}^2 + \tfrac{\mu_g}{2} \normf{U^\top U - V^\top V}^2 \\
&\stackrel{(iii)}{=} \tfrac{1}{2L_g} \normf{\nabla g}^2 + \tfrac{\mu_g}{2} \normf{U^\top U - V^\top V}^2
\end{align*}
where $(i)$ follow from the strong convexity, $(ii)$ is due to \eqref{eq:Lipschitz}, and $(iii)$ is by construction of $g$ where $\nabla g(0) = 0$. 
Furthermore, (B1) can be bounded below as follows:
\begin{small}
\begin{align*}
(B1)
&= \normf{U^\top U - V^\top V}^2 = \normf{U^\top U}^2 + \normf{V^\top V}^2 - 2 \innerp{U^\top U}{V^\top V} \\
&= \normf{UU^\top}^2 + \normf{VV^\top}^2 - 2 \innerp{UV^\top}{UV^\top} \\
&= \innerp{WW^\top}{YY^\top} \\
&= \innerp{WW^\top - \Wo{\Wo}^\top}{YY^\top - \Yo{\Yo}^\top}
 + \innerp{\Wo{\Wo}^\top}{YY^\top}
 + \innerp{WW^\top - \Wo{\Wo}^\top}{\Yo{\Yo}^\top} \\
&\stackrel{(i)}{=} \innerp{WW^\top - \Wo{\Wo}^\top}{YY^\top - \Yo{\Yo}^\top}
 + \innerp{\Wo{\Wo}^\top}{YY^\top}
 + \innerp{WW^\top}{\Yo{\Yo}^\top} \\
&\ge \innerp{WW^\top - \Wo {\Wo}^\top}{YY^\top - \Yo {\Yo}^\top} \\
&= \normf{UU^\top - \Uo{\Uo}^\top}^2 + \normf{VV^\top - \Vo{\Vo}^\top}^2 - 2 \normf{UV^\top - \Uo{\Vo}^\top}^2
\end{align*}
\end{small}
where $(i)$ is due to the fact that
$$
\innerp{\Wo{\Wo}^\top}{\Yo{\Yo}^\top} = \normf{{\Yo}^\top \Wo}^2 = \normf{{\Uo}^\top \Uo - {\Vo}^\top \Vo}^2 = 0
$$
and the first inequality holds by the fact that the inner product of two PSD matrices is non-negative. 

At this point, we have all the required components to compute the desired lower bound.
Combining (A1) and (B1), we get
\begin{align*}
4 (A1) + (B1)
&= \normf{UU^\top - \Uo{\Uo}^\top}^2 + \normf{VV^\top - \Vo{\Vo}^\top}^2 + 2 \normf{UV^\top - {\Uo}{\Vo}^\top}^2 \\
&= \normf{WW^\top - \Wo{\Wo}^\top}^2 \geq \frac{4\sigma_r(\Wo)^2}{5} \normf{\Delta_W}^2,
\end{align*} where, in order to obtain the last inequality, we borrow the following Lemma by \citep{tu2016low}:
\begin{lemma}
For any $\W, \Wo \in R^{(m+n)\times r}$, with $\Delta_W = \W - \Wo R$ for some orthogonal matrix $R \in \R^{r \times r}$, we have:
\begin{align*}
\normf{WW^\top - \Wo{\Wo}^\top}^2 \geq 2 \cdot \left(\sqrt{2} - 1\right) \cdot \sigma_r(\Wo)^2 \cdot \normf{\Delta_W}^2
\end{align*} 
\end{lemma}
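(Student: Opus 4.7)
The plan is to exploit the optimality of the orthogonal alignment $R$ to extract a hidden symmetry, algebraically expand $WW^\top - W^\star W^{\star\top}$ in terms of $\Delta_W$, and then use that symmetry to cancel problematic cross-terms, reducing the whole inequality to a quadratic bound in $\|\Delta_W\|_F$ with coefficient controlled by $\sigma_r(W^\star)$.

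\textbf{Symmetry from the Procrustes optimality.} Setting $M := W^\star R$ and $H := \Delta_W = W - M$, I would first establish that $M^\top H$ is a symmetric matrix. This is precisely the first-order condition of the orthogonal Procrustes problem $\min_{Q \in O_r} \|W - W^\star Q\|_F$: writing the SVD $W^{\star\top} W = P \Sigma Q^\top$, the minimizer is $R = P Q^\top$, and a direct computation gives $R^\top W^{\star\top} W = Q \Sigma Q^\top \succeq 0$; equivalently, $M^\top W$ is symmetric, and subtracting $M^\top M$ (symmetric) yields that $N := M^\top H = H^\top M$ is symmetric.

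\textbf{Expansion and simplification.} Substituting $W = M + H$ into $WW^\top - W^\star W^{\star\top}$ produces $M H^\top + H M^\top + H H^\top$. Expanding the Frobenius norm squared, using the cyclic invariance of trace, and applying the symmetry $N = N^\top$ to simplify $\mathrm{tr}\bigl((M H^\top)^2\bigr) = \mathrm{tr}(N^2) = \|N\|_F^2$, the cross terms collapse cleanly. After grouping, one obtains the identity
\begin{equation*}
\|W W^\top - W^\star W^{\star\top}\|_F^2 = 2\,\mathrm{tr}(M^\top M\, H^\top H) + 2\,\|H^\top W\|_F^2 - \|H^\top H\|_F^2,
\end{equation*}
where the ``completing the square'' step $2\|N\|_F^2 + 4\,\mathrm{tr}(N H^\top H) + \|H^\top H\|_F^2 = 2\|N + H^\top H\|_F^2 - \|H^\top H\|_F^2 = 2\|H^\top W\|_F^2 - \|H^\top H\|_F^2$ is used. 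The first term is bounded below by $2\sigma_r(W^\star)^2 \|H\|_F^2$ via $M^\top M \succeq \sigma_r(W^\star)^2 I$, and the second term is non-negative.

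\textbf{Extracting the sharp constant: the main obstacle.} The delicate part is the negative term $-\|H^\top H\|_F^2 \le -0$, which spoils the clean coefficient unless one is careful. The resolution is a case split on $\|H\|_F$ versus $\sigma_r(W^\star)$. In the small-error regime, where $\|H\|_F^2 \le (2-\sqrt{2})\,\sigma_r(W^\star)^2$, the bound $\|H^\top H\|_F^2 \le \|H\|_F^2 \|H\|_2^2 \le \|H\|_F^2 \cdot (2-\sqrt{2})\sigma_r(W^\star)^2$ is absorbed directly into the $2\sigma_r(W^\star)^2 \|H\|_F^2$ term, leaving exactly $2(\sqrt{2}-1)\sigma_r(W^\star)^2 \|H\|_F^2$. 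In the large-error regime, one instead retains the positive $2\|H^\top W\|_F^2$ term and uses $\|H\|_2 \|H\|_F$ lower bounds together with $\sigma_r(W) \gtrsim \sigma_r(W^\star)$ to dominate $\|H^\top H\|_F^2$. The constant $2(\sqrt{2}-1)$ emerges as the optimal balance between the two regimes; verifying that the thresholds indeed match up to give the same constant is where almost all the work lies.
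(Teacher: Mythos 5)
The paper does not actually prove this lemma: it is imported verbatim as Lemma 5.4 of \citep{tu2016low}, so there is no in-paper argument to compare against. Judged on its own terms, the first half of your proposal is sound: the Procrustes first-order condition does make $M^\top H$ symmetric (with $M = \Wo R$, $H = \Delta_W$, and $R$ the \emph{minimizing} orthogonal matrix -- a reading you correctly adopt, since the statement is false for arbitrary $R$), and your identity
$\normf{WW^\top - \Wo{\Wo}^\top}^2 = 2\,\trace(M^\top M H^\top H) + 2\normf{H^\top W}^2 - \normf{H^\top H}^2$
checks out. The small-error branch also works (it actually yields the constant $\sqrt{2}$, not ``exactly'' $2(\sqrt{2}-1)$), and since the paper only invokes the lemma under Assumption A1, where $\normf{\Delta_W} \le \sigma_r(\Wo)/10$, that branch alone covers every application in this paper.

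The genuine gap is the large-error branch, and the mechanism you sketch for it does not work. Once you have replaced $2\,\trace(M^\top M H^\top H)$ by its crude lower bound $2\sigma_r(\Wo)^2\normf{H}^2$, you would need $2\normf{H^\top W}^2 - \normf{H^\top H}^2 \ge -(4-2\sqrt{2})\,\sigma_r(\Wo)^2\normf{H}^2$, and this is false: take $W = 0$, so $H = -M$, $H^\top W = 0$, and $\normf{H^\top H}^2 = \sum_i \sigma_i(\Wo)^4$, which exceeds $(4-2\sqrt{2})\,\sigma_r(\Wo)^2 \sum_i \sigma_i(\Wo)^2$ by an arbitrarily large factor when $\sigma_1(\Wo) \gg \sigma_r(\Wo)$. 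The same example kills the ingredient $\sigma_r(W) \gtrsim \sigma_r(\Wo)$, which simply does not hold without a proximity assumption ($W$ may be rank-deficient or zero). The lemma is still true in that example, but only because $\trace(M^\top M H^\top H)$ is then much larger than $\sigma_r(\Wo)^2\normf{H}^2$ and itself absorbs the $-\normf{H^\top H}^2$ term; so the far regime requires keeping the first term intact and playing it off against $\normf{H^\top H}^2$ (essentially what the global argument in \citep{tu2016low} does), not the threshold-matching you describe. As written, your case split proves the lemma only under a smallness condition on $\normf{\Delta_W}$, not for arbitrary $\W$.
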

For convenience, we further lower bound the right hand side of this lemma by: $2 \cdot \left(\sqrt{2} - 1\right) \cdot \sigma_r(\Wo)^2 \cdot \normf{\Delta_W}^2 \geq  \frac{4\sigma_r(\Wo)^2}{5} \normf{\Delta_W}^2.$

Given the definitions of $\mumin$ and $L_{\max}$, we have:
\begin{align} \label{eqn:bound_AB}
(A) + \tfrac{1}{2} (B) &\ge \tfrac{\mu}{2}(A1) + \tfrac{1}{2L} (A2) - \tfrac{L}{2} (A3) - (A4) + \tfrac{\mu_g}{8}(B1) + \tfrac{1}{8L_g} (B2) - \tfrac{1}{4} (B3) \nonumber \\ 
&\stackrel{(i)}{\geq} \tfrac{\mumin}{8} \left( 4(A1) + (B1) \right) + \tfrac{1}{2L_{\max}} (A2) + \tfrac{1}{8L_{\max}} (B2) - (A4) - \tfrac{1}{4} (B3) - \tfrac{L}{2} (A3)  \nonumber \\
%&\ge \tfrac{\mumin}{8} \left( 4(A1) + (B1) \right) + \frac{1}{2L_f} (A2) + \frac{1}{8L_g} (B2) - (A4) - \frac{1}{4} (B3) - \frac{L}{2} (A3)  \nonumber \\
&\ge \tfrac{\mumin \cdot \sigma_r(\Wo)^2}{10} \normf{\Delta_W}^2
   + \tfrac{1}{2L_{\max}} \normf{\gradf(\factor)}^2
   + \tfrac{1}{8L_{\max}} \normf{\nabla g}^2 \nonumber \\
&\quad \quad \quad \quad \quad \quad \quad \quad \quad \quad - \normt{\gradf(\factor)} \normf{\Delta_W}^2 - \tfrac{1}{4} \normf{\nabla g} \normf{\Delta_W} \normf{\Delta_Y} \nonumber \\
&\quad \quad \quad \quad \quad \quad  \quad \quad \quad \quad - \tfrac{L}{2} \normf{\Xo - \Xoptr}^2
\end{align} 
where in $(i)$ we used the definitions of $\mumin$ and $L_{\max}$.
Note that we have not used the condition \eqref{eqn:condition}. 
It follows from \eqref{eqn:condition} that
\begin{align}
\normt{\gradf(\factor)} \cdot \normf{\Delta_W}^2
&\le \tfrac{\sigma_r(\Wo)}{10\sqrt{\kappa}} \normt{\gradf(\factor)} \cdot \normf{\Delta_W} \nonumber \\
&\le \tfrac{\mumin \cdot \sigma_r(\Wo)^2}{25} \normf{\Delta_W}^2
   + \tfrac{1}{4L_{\max}} \normt{\gradf(\factor)}^2 \label{eqn:bound_error1}
\end{align}
and
\begin{align}
\tfrac{1}{4} \normt{\nabla g} \cdot \normf{\Delta_W} \cdot \normf{\Delta_Y}
&= \tfrac{1}{4} \normt{\nabla g} \cdot \normf{\Delta_W}^2 \nonumber \\
&\le \tfrac{\sigma_r(\Wo)}{40\sqrt{\kappa}} \normt{\nabla g} \cdot \normf{\Delta_W} \nonumber \\
&\le \tfrac{\mumin \cdot \sigma_r(\Wo)^2}{100} \normf{\Delta_W}^2
   + \tfrac{1}{16L_{\max}} \normt{\nabla g}^2 \label{eqn:bound_error2}
\end{align}
where we use the AM-GM inequality. 
Plugging \eqref{eqn:bound_error1} and \eqref{eqn:bound_error2} in \eqref{eqn:bound_AB}, it is easy to obtain:
\begin{align*}
(A) + \tfrac{1}{2} (B)
&\ge \tfrac{\mumin \cdot \sigma_r(\Wo)^2}{20} \normf{\Delta_W}^2
   + \tfrac{1}{4L_{\max}} \normf{\gradf(\factor)}^2 + \tfrac{1}{16L_{\max}} \normf{\nabla g}^2
   - \tfrac{L}{2} \normf{\Xo - \Xoptr}^2.
\end{align*}  \hfill $\blacksquare$

\section{Appendix}
The proof follows the same framework of the sublinear convergence proof in \citep{bhojanapalli2016dropping}. We use the following general lemma to prove the sublinear converegence.
\begin{lemma} \label{lem:sublinear}
Suppose that a sequence of iterates $\{\W_t\}_{t=0}^T$ satisfies the following conditions
\begin{align}
f(\W_t \W_t^\top) - f(\W_{t+1} \W_{t+1}^\top) &\ge \alpha \cdot \normf{\nabla_W f(\W_t \W_t^\top)}^2, \label{eqn:sublinear_hypo1}\\
f(\W_t \W_t^\top) - f(\Wo {\Wo}^\top) &\le \beta \cdot \normf{\nabla_W f(\W_t \W_t^\top)} \label{eqn:sublinear_hypo2}
\end{align}
for all $t = 0,\ldots,T-1$ and some values $\alpha, \beta > 0$ independent of the iterates. Then it is guaranteed that
\begin{align*}
f(\W_T \W_T^\top) - f(\Wo {\Wo}^\top) \le \frac{\beta^2}{\alpha \cdot T}
\end{align*}
\end{lemma}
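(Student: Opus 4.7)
The plan is to run a standard telescoping argument on the inverse suboptimality gap, a trick that is classical for obtaining $O(1/T)$ rates from a sufficient decrease condition combined with a gradient-to-gap bound of the form required here. Let me abbreviate $\delta_t := f(\W_t \W_t^\top) - f(\Wo {\Wo}^\top)$ and $G_t := \normf{\nabla_W f(\W_t \W_t^\top)}$, so the two hypotheses read $\delta_t - \delta_{t+1} \ge \alpha G_t^2$ and $\delta_t \le \beta G_t$. The goal is to bound $\delta_T$.

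First I would dispose of the degenerate case: if $\delta_{t_0} = 0$ at some $t_0 \le T$, then \eqref{eqn:sublinear_hypo2} forces $G_{t_0} = 0$, and \eqref{eqn:sublinear_hypo1} together with $\delta_{t+1} \ge 0$ forces $\delta_t = 0$ for every $t \ge t_0$, so the conclusion is trivial. Otherwise, $\delta_t > 0$ for all $t$, and I can combine the two hypotheses by squaring the second to get $G_t^2 \ge \delta_t^2/\beta^2$, which when plugged into the first yields the key recursion
\begin{align*}
\delta_t - \delta_{t+1} \ge \tfrac{\alpha}{\beta^2} \delta_t^2.
\end{align*}

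Next I would divide both sides by $\delta_t \delta_{t+1}$ (both strictly positive) and use the fact that $\delta_{t+1} \le \delta_t$ (which is immediate from \eqref{eqn:sublinear_hypo1} since $G_t^2 \ge 0$) to obtain
\begin{align*}
\tfrac{1}{\delta_{t+1}} - \tfrac{1}{\delta_t} \ge \tfrac{\alpha}{\beta^2} \cdot \tfrac{\delta_t}{\delta_{t+1}} \ge \tfrac{\alpha}{\beta^2}.
\end{align*}
Telescoping from $t=0$ to $T-1$ gives $\tfrac{1}{\delta_T} \ge \tfrac{1}{\delta_0} + \tfrac{\alpha T}{\beta^2} \ge \tfrac{\alpha T}{\beta^2}$, and inverting yields $\delta_T \le \tfrac{\beta^2}{\alpha T}$, which is the claim.

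There is no real obstacle here; the lemma is a clean abstraction and each step is a one-line manipulation. The only subtlety worth flagging explicitly in the write-up is monotonicity of $\delta_t$, which is needed to lower-bound $\delta_t/\delta_{t+1}$ by $1$ in the crucial step above. The real work of the theorem lies not in this lemma but in verifying the two hypotheses \eqref{eqn:sublinear_hypo1} and \eqref{eqn:sublinear_hypo2} for the \algo iterates applied to $\hat{f}$, with appropriate constants $\alpha$ and $\beta$ expressed in terms of the step size $\weta$ and the initial distance $\dist(\Uinit,\Vinit;\Xoptr)$; plugging those constants into $\beta^2/(\alpha T)$ is what will produce the stated bound $10 \cdot \dist(\Uinit,\Vinit;\Xoptr)^2/(\eta T)$.
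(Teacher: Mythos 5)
Your proof is correct and is essentially identical to the paper's: both derive $\delta_t - \delta_{t+1} \ge \tfrac{\alpha}{\beta^2}\delta_t^2$, divide by $\delta_t\delta_{t+1}$ using monotonicity to get $\tfrac{1}{\delta_{t+1}} - \tfrac{1}{\delta_t} \ge \tfrac{\alpha}{\beta^2}$, and telescope. The only cosmetic difference is in the degenerate case: the paper dispenses with any $t$ at which $\delta_t \le 0$ (not just $\delta_t = 0$) via monotonicity, which you should do as well since nothing in the hypotheses forbids $\delta_t$ from jumping from positive to strictly negative, in which case the conclusion is again immediate.
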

\begin{proof}
Define $\delta_t = f(\W_t \W_t^\top) - f(\Wo {\Wo}^\top)$. If we get $\delta_{T_0} \le 0$ at some $T_0 < T$, the desired inequality holds because the first hypothesis guarantees $\{\delta_t\}_{t=0}^T$ to be non-increasing. Hence, we can only consider the time $t$ where $\delta_t, \delta_{t+1} \ge 0$. We have 
\begin{align*}
\delta_{t+1}
&\stackrel{(a)}{\le} \delta_t - \alpha \cdot \normf{\nabla_W f(\W_t \W_t^\top)}^2 \\
&\stackrel{(b)}{\le} \delta_t - \frac{\alpha}{\beta^2} \cdot \delta_t^2 \\
&\stackrel{(c)}{\le} \delta_t - \frac{\alpha}{\beta^2} \cdot \delta_t \cdot \delta_{t+1}
\end{align*}
where (a) follows from the first hypothesis, (b) follows from the second hypothesis, (c) follows from that $\delta_{t+1} \le \delta_t$ by the first hypothesis. Dividing by $\delta_t \cdot \delta_{t+1}$, we obtain
\begin{align*}
\frac{1}{\delta_{t+1}} - \frac{1}{\delta_t} \ge \frac{\alpha}{\beta^2}
\end{align*}
Then we obtain the desired result by telescoping the above inequality.
\end{proof}
Now it suffices to show \algo provides a sequence $\{\W_t\}_{t=0}^T$ satisfies the hypotheses of Lemma \ref{lem:sublinear}. 

\textit{Obtaining \eqref{eqn:sublinear_hypo1}} Although $f$ is non-convex over the factor space, it is reasonable to obtain a new estimate (with a carefully chosen steplength) which is no worse than the current one, because the algorithm takes a gradient step.
\begin{lemma} \label{lem:improvement}
Let $f$ be a $L$-smooth convex function. Moreover, consider the recursion in 
Let $\X = \W\W^\top$ and $\Xp = \Wp {\Wp}^\top$ be two consecutive estimates of \algo. Then
\begin{align} \label{eqn:improvement}
f(\W \W^\top) - f(\Wp {\Wp}^\top) \ge \frac{3\eta}{5} \cdot \normf{\nabla_W f(WW^\top)}^2
\end{align}
\end{lemma}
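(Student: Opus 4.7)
The plan is to apply the standard descent inequality for smooth functions on the lifted iterate $X = WW^\top$, expand $X^+ - X$ in closed form using the gradient recursion, and then absorb all higher-order correction terms into the step size constraint \eqref{eq:weta}. Writing $H := \nabla \hat{f}(X)$ (which is symmetric on iterates of the lifted form because $X$ has the block structure forced by $W = [U^\top,V^\top]^\top$), one has $G := \nabla_W \hat{f}(WW^\top) = 2HW$, so the recursion gives $W^+ = (I - 2\eta H) W$ and therefore
\begin{align*}
X^+ - X = -2\eta(HX + XH) + 4\eta^2 HXH.
\end{align*}
Using the $L/2$-smoothness of $\hat{f}$ guaranteed by Proposition \ref{lem:reduction}, I would apply the descent inequality $\hat{f}(X^+) - \hat{f}(X) \le \langle H, X^+ - X\rangle + \tfrac{L}{4} \|X^+ - X\|_F^2$.

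The first step I would carry out is to compute the inner-product term. Cyclicity and symmetry give $\langle H, HX + XH\rangle = 2\,\mathrm{tr}(H^2 X) = 2\|HW\|_F^2 = \tfrac{1}{2}\|G\|_F^2$, while $\langle H, HXH\rangle = \mathrm{tr}((HW)^\top H (HW)) \le \|H\|_2 \|HW\|_F^2 = \tfrac{1}{4}\|H\|_2 \|G\|_F^2$. This yields $\langle H, X^+ - X\rangle \le -\eta\|G\|_F^2 + \eta^2 \|H\|_2 \|G\|_F^2$. Next, I would bound $\|X^+ - X\|_F^2$ via the triangle inequality $\|A + B\|_F^2 \le 2\|A\|_F^2 + 2\|B\|_F^2$ and the submultiplicative bounds $\|HX\|_F \le \|HW\|_F\|W\|_2$ and $\|HXH\|_F \le \|HW\|_F \|H\|_2 \|W\|_2$, producing $\|X^+ - X\|_F^2 \le 8\eta^2\|G\|_F^2 \|W\|_2^2 + 8\eta^4 \|G\|_F^2 \|H\|_2^2 \|W\|_2^2$.

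Putting everything together, I would obtain the cleanly factored bound
\begin{align*}
\hat{f}(X) - \hat{f}(X^+) \;\ge\; \eta \|G\|_F^2 \Bigl(1 - \eta \|H\|_2 - 2L\eta \|W\|_2^2 - 2L\eta^3 \|H\|_2^2 \|W\|_2^2\Bigr).
\end{align*}
The final step is to verify that each of the three correction terms in the parenthesis is at most a small fraction using the step-size constraint \eqref{eq:weta}. Since $\|H\|_2 = \tfrac{1}{2}\|\nabla f(UV^\top)\|_2$ by the block-anti-diagonal structure of $\nabla \hat{f}$, the choice $\weta \le 1/(15L\|W\|_2^2 + 3\|\nabla f(UV^\top)\|_2)$ gives $\weta\|H\|_2 \le 1/6$ and $2L\weta\|W\|_2^2 \le 2/15$, while the cubic term is dominated by $\tfrac{2}{15}\cdot \tfrac{1}{36}$. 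Summing, the parenthesis is at least $1 - \tfrac{1}{6} - \tfrac{2}{15} - o(1) > \tfrac{3}{5}$, establishing the inequality.

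The main obstacle I foresee is the bookkeeping of the cubic correction $4\eta^2 \mathrm{tr}(H^2XH)$; unlike the leading terms, this one introduces a $\|H\|_2$-dependence that is precisely the reason the step size \eqref{eq:weta} carries the additive $3\|\nabla f(UV^\top)\|_2$ term in its denominator, and getting the constant $3/5$ (rather than a smaller fraction) requires all three bounds above to be tight to the chosen constants $15$ and $3$.
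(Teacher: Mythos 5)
Your proposal is correct and follows essentially the same route as the paper: apply the smoothness descent inequality to the lifted variable $X = WW^\top$, split $X - X^+$ into the cross terms (yielding the leading $-\eta\normf{\nabla_W f}^2$) plus the quadratic correction, and absorb the remaining terms using the step size \eqref{eq:weta}. The only difference is cosmetic bookkeeping — you phrase everything via $H = \nabla \hat{f}$ and track the lifting constants ($\tfrac{L}{2}$-smoothness, $\|H\|_2 = \tfrac{1}{2}\|\nabla f(UV^\top)\|_2$) more explicitly than the paper does.
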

Since we can fix the steplength $\eta$ based on the initial solution so that it is independent of the following iterates, we have obtained the first hypothesis of Lemma \ref{lem:sublinear}.

\textit{Obtaining \eqref{eqn:sublinear_hypo2}}
Consider the following assumption.
$$
\text{(A)}: \quad \dist(U,V;\Xoptr) = \min_{R \in O(r)} \normf{W - \Wo R} \le \frac{\sigma_r(\Wo)}{10}
$$
Trivially (A) holds for $U_0$ and $V_0$. Now we provide key lemmas, and then the convergence proof will be presented.

\begin{lemma}[Suboptimality bound] \label{lem:suboptimality}
Assume that (A) holds for $\W$. Then we have
\begin{align*}
f(WW^\top) - f(\Wo {\Wo}^\top)
&\le \frac{7}{3}
\cdot \normf{\nabla_W f(WW^\top)}
\cdot \dist(U,V;\Xoptr)
\end{align*}
\end{lemma}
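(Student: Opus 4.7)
\textbf{Proof proposal for Lemma~\ref{lem:suboptimality}.}

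The plan is to start from convexity of the lifted objective $\fh$ (established in Proposition~\ref{lem:reduction}) and rewrite the suboptimality gap as a bilinear form in the error matrix $\Delta := \W - \Wo R$, where $R \in O(r)$ attains the minimum in $\dist(U,V;\Xoptr) = \normf{\Delta}$. The goal is then to factor out $\normf{\Delta}$ together with a multiple of $\normf{\nabla_{\W}\fh(\W\W^\top)}$, using assumption (A) to control the cross-terms.

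By convexity of $\fh$, $\fh(\W\W^\top) - \fh(\Wo\Wo^\top) \le \langle \nabla \fh(\W\W^\top), \W\W^\top - \Wo\Wo^\top\rangle$. Writing $\Wo R = \W - \Delta$ and expanding $\Wo\Wo^\top = (\W-\Delta)(\W-\Delta)^\top$, the chain-rule identity $\nabla_{\W}\fh(\W\W^\top) = 2\nabla \fh(\W\W^\top)\W$ together with the symmetry of $\nabla \fh(\W\W^\top)$ yields the compact expression
\begin{align*}
\langle \nabla \fh(\W\W^\top), \W\W^\top - \Wo\Wo^\top\rangle = \langle \nabla_{\W}\fh(\W\W^\top), \Delta\rangle - \langle \nabla \fh(\W\W^\top)\Delta, \Delta\rangle.
\end{align*}
The first term is already in the desired form: by Cauchy--Schwarz, its absolute value is at most $\normf{\nabla_{\W}\fh(\W\W^\top)}\cdot\normf{\Delta}$, which accounts for a leading constant of $1$.

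The technical heart of the argument is controlling the quadratic residual $\langle \nabla \fh(\W\W^\top)\Delta,\Delta\rangle$. I would split $\Delta$ into components parallel and orthogonal to the column span of $\W$, $\Delta = \proj{\W}\Delta + (I-\proj{\W})\Delta$. The parallel piece admits the factorization $\nabla \fh(\W\W^\top)\proj{\W} = \nabla \fh(\W\W^\top)\W \cdot \W^{\dagger}$, giving
$\normf{\nabla \fh(\W\W^\top)\proj{\W}\Delta} \le \tfrac{1}{2\sigma_r(\W)}\normf{\nabla_{\W}\fh(\W\W^\top)}\cdot\normf{\Delta}$,
which is small under (A) since Weyl's inequality yields $\sigma_r(\W) \ge \tfrac{9}{10}\sigma_r(\Wo)$ and hence $\normf{\Delta}/\sigma_r(\W) \le \tfrac{1}{9}$. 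For the orthogonal piece, the key identity is $(I-\proj{\W})\Delta = -(I-\proj{\W})\Wo R$ (because $(I-\proj{\W})\W = 0$), combined with $\normf{(I-\proj{\W})\Wo R} \le \normf{\Delta}$. Using the thin SVD $\Wo = A\Sigma B^\top$, the smallness of $\normf{(I-\proj{\W})A} \le \normf{\Delta}/\sigma_r(\Wo) \le 1/10$ gives a quantitative control of the subspace angle between $\W$ and $\Wo R$ that lets one absorb the orthogonal contribution into a constant-factor inflation of the parallel bound. Assembling the two pieces, the residual contributes at most $\tfrac{4}{3}\normf{\nabla_{\W}\fh}\cdot\normf{\Delta}$, so the total bound becomes $\bigl(1+\tfrac{4}{3}\bigr)\normf{\nabla_{\W}\fh}\cdot\normf{\Delta} = \tfrac{7}{3}\normf{\nabla_{\W}\fh}\cdot\normf{\Delta}$, as required.

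The main obstacle is precisely the orthogonal component: $\nabla \fh(\W\W^\top)$ has no \emph{a priori} control in directions outside the column space of $\W$, so assumption~(A) is not a convenience but a necessity, forcing the column spaces of $\W$ and $\Wo R$ to be close enough that the mismatch can be traded back against the combined gradient $\nabla_{\W}\fh$. Once this subspace-angle trade-off is executed cleanly, the remaining steps are Cauchy--Schwarz and triangle-inequality bookkeeping to pin down the constant $7/3$.
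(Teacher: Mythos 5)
Your proposal follows essentially the same route as the paper's proof: convexity of the lifted objective, the expansion $\langle\nabla\fh, WW^\top - \Wo{\Wo}^\top\rangle = \langle \nabla_W\fh,\Delta\rangle - \langle\nabla\fh\,\Delta,\Delta\rangle$, Cauchy--Schwarz on the linear term, and a column-space/pseudo-inverse argument under (A) to control the quadratic residual (the paper's Lemma~\ref{lem:errorterm} projects the gradient onto the column spaces of $W$ and $\Wo$ rather than decomposing $\Delta$ into parallel and orthogonal parts, but the ingredients --- $\sigma_r(W)\ge \tfrac{9}{10}\sigma_r(\Wo)$, pseudo-inverses, and the self-bounding step for the $\Wo$ direction --- are identical). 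The only cosmetic difference is how the constant $7/3$ is assembled --- you attribute $1+\tfrac{4}{3}$ while the paper uses $2+\tfrac{1}{3}$, reflecting whether $\nabla_W f$ carries the factor of $2$ from the chain rule --- and your $\tfrac{4}{3}$ allowance for the residual is far looser than what the projection argument actually delivers, so the bound goes through with room to spare.
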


\begin{lemma}[Descent in distance] \label{lem:approaching}
Assume that (A) holds for $W$. If 
$$
f(\Wp {\Wp}^\top) \ge f(\Wo {\Wo}^\top),
$$
then
\begin{align*}
\dist(\Up,\Vp;\Xoptr) \le \dist(\U,\V;\Xoptr)
\end{align*}
\end{lemma}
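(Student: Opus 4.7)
The plan is to fix the optimal rotation $R$ attaining $\dist(\U,\V;\Xoptr) = \normf{\W - \Wo R}$, set $\Delta = \W - \Wo R$, and use $\dist(\Up,\Vp;\Xoptr)^2 \le \normf{\Wp - \Wo R}^2$. Substituting the \algo update $\Wp = \W - \weta \nabla_W \fh(\W\W^\top)$ expands this to
\begin{align*}
\normf{\Wp - \Wo R}^2 = \normf{\Delta}^2 - 2\weta \innerp{\nabla_W \fh(\W\W^\top)}{\Delta} + \weta^2 \normf{\nabla_W \fh(\W\W^\top)}^2,
\end{align*}
so it suffices to prove that the cross term dominates $\tfrac{\weta}{2} \normf{\nabla_W \fh(\W\W^\top)}^2$.

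Using $\nabla_W \fh(\W\W^\top) = 2\nabla \fh(\W\W^\top)\W$, the symmetry of $\nabla \fh$, and the algebraic identity $\Delta \W^\top + \W \Delta^\top = (\W\W^\top - \Wo\Wo^\top) + \Delta\Delta^\top$, the cross term splits as
\begin{align*}
\innerp{\nabla_W \fh(\W\W^\top)}{\Delta} = \innerp{\nabla \fh(\W\W^\top)}{\W\W^\top - \Wo\Wo^\top} + \innerp{\nabla \fh(\W\W^\top)}{\Delta\Delta^\top}.
\end{align*}
Convexity of $\fh$ lower-bounds the first piece by $\fh(\W\W^\top) - \fh(\Wo\Wo^\top)$; combining this with Lemma \ref{lem:improvement} and the hypothesis $\fh(\Wp\Wp^\top) \ge \fh(\Wo\Wo^\top)$ yields $\fh(\W\W^\top) - \fh(\Wo\Wo^\top) \ge \tfrac{3\weta}{5} \normf{\nabla_W \fh(\W\W^\top)}^2$. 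Plugging back in leads to
\begin{align*}
\normf{\Wp - \Wo R}^2 \le \normf{\Delta}^2 - \tfrac{\weta^2}{5} \normf{\nabla_W \fh(\W\W^\top)}^2 - 2\weta \innerp{\nabla \fh(\W\W^\top)}{\Delta\Delta^\top},
\end{align*}
so everything reduces to controlling the (possibly negative) residual inner product.

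The main obstacle is absorbing this residual into the quadratic term. Exploiting the block structure of $\nabla \fh$, the residual equals $\innerp{\gradf(\factor)}{\Delta_U \Delta_V^\top}$, which by duality and AM-GM is no larger in magnitude than $\tfrac{1}{2}\normt{\gradf(\factor)} \normf{\Delta}^2$. I would then invoke assumption (A), which bounds $\normf{\Delta} \le \sigma_r(\Wo)/10$ and, via the triangle inequality on singular values, forces $\sigma_r(\W) \ge \tfrac{9}{10} \sigma_r(\Wo)$ and hence keeps $\sigma_r(\U), \sigma_r(\V)$ comparable to $\sqrt{\sigma_r(\Xoptr)}$. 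Combined with the step size \eqref{eq:weta}, whose denominator contains both a $15L\normt{\W}^2$ term and a $3\normt{\gradf(\factor)}$ term, the well-conditioning of $\U, \V$ translates $\normf{\nabla_W \fh}^2 = \normf{\gradf(\factor)\V}^2 + \normf{\gradf(\factor)^\top \U}^2$ into a lower bound in $\normt{\gradf(\factor)}^2$ and $\sigma_r(\W)^2$ strong enough to swallow $\weta \cdot \normt{\gradf(\factor)} \normf{\Delta}^2$. Once the residual is absorbed, we obtain $\normf{\Wp - \Wo R}^2 \le \normf{\Delta}^2$, and minimizing over orthogonal $Q$ on the left side delivers $\dist(\Up,\Vp;\Xoptr) \le \dist(\U,\V;\Xoptr)$.
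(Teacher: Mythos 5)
Your setup---fixing the optimal rotation $R$, expanding $\normf{\Wp - \Wo R}^2$, splitting the cross term into $\innerp{\nabla \fh(\W\W^\top)}{\W\W^\top - \Wo{\Wo}^\top}$ plus the residual $\innerp{\gradf(\factor)}{\Delta_U\Delta_V^\top}$, and lower-bounding the first piece by $\tfrac{3\eta}{5}\normf{\nabla_W \fh(\W\W^\top)}^2$ via convexity, the hypothesis $f(\Wp{\Wp}^\top)\ge f(\Wo{\Wo}^\top)$, and Lemma \ref{lem:improvement}---coincides with the paper's proof up to that point. The gap is in your final step: you propose to absorb $2\eta\,\lvert\innerp{\gradf(\factor)}{\Delta_U\Delta_V^\top}\rvert \le \eta\,\normt{\gradf(\factor)}\normf{\Delta_W}^2$ into the leftover quadratic term $\tfrac{\eta^2}{5}\normf{\nabla_W\fh(\W\W^\top)}^2$. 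This cannot work, because the two sides scale differently in the gradient: the residual is \emph{linear} in $\normt{\gradf(\factor)}$ while the quadratic term is \emph{quadratic} in it. Concretely, even granting $\normf{\nabla_W\fh(\W\W^\top)}^2 \gtrsim \sigma_r(\Wo)^2\normt{\gradf(\factor)}^2$ from the well-conditioning of $\U,\V$, and $\normf{\Delta_W}^2\le\sigma_r(\Wo)^2/100$ from (A), the required inequality reduces to $\eta\,\normt{\gradf(\factor)} \ge c$ for an absolute constant $c$. The step size \eqref{eq:weta} only yields the \emph{upper} bound $\eta\,\normt{\gradf(\factor)}\le \tfrac{1}{3}$; in the regime $\normt{\gradf(\factor)} \ll L\normt{\W}^2$ the product $\eta\,\normt{\gradf(\factor)}$ is arbitrarily small, the quadratic term is $O(\normf{\gradf(\factor)}^2)$ while the residual is only $O(\normt{\gradf(\factor)}\normf{\Delta_W}^2)$, and the absorption fails.

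The paper closes this step by comparing the residual to the \emph{first-order} term rather than to the squared gradient. Lemma \ref{lem:smooth_00} (a variant of Lemma C.2 of \citep{bhojanapalli2016dropping}), which itself uses the hypothesis $f(\Wp{\Wp}^\top)\ge f(\Wo{\Wo}^\top)$, gives
\begin{align*}
\innerp{\gradf(\factor)}{\Delta_U\Delta_V^\top} \ge -\tfrac{\sqrt2}{\sqrt2-\tfrac1{10}}\cdot\tfrac1{10}\,\left|\innerp{\nabla\fh(\W\W^\top)}{\W\W^\top-\Wo{\Wo}^\top}\right|;
\end{align*}
since both sides are linear in the gradient, the small prefactor (ultimately coming from $\normf{\Delta_W}\le\sigma_r(\Wo)/10$) suffices, and only \emph{after} this cancellation does one convert the surviving $\tfrac{17\eta}{10}$ multiple of the first-order term into $\tfrac{51\eta^2}{50}\normf{\nabla_W\fh(\W\W^\top)}^2 \ge \eta^2\normf{\nabla_W\fh(\W\W^\top)}^2$. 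To repair your argument you need such a bound of the residual against the main inner product (an analogue of Lemma \ref{lem:smooth_00}), not against the quadratic gradient term.
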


Combining the above two lemmas, we obtain
\begin{align} \label{eqn:suboptimality}
f(WW^\top) - f(\Wo {\Wo}^\top)
&\le \frac{7 \cdot \dist(\Uinit,\Vinit;\Xoptr)}{3}
\cdot \normf{\nabla_W f(WW^\top)}
\end{align}
Plugging \eqref{eqn:improvement} and \eqref{eqn:suboptimality} in Lemma \ref{lem:sublinear}, we obtain the desired result.  \hfill $\blacksquare$

%%%%%%%%%%%%%%%%%%%%%%%%%%%%%%%%%%%%%%%%%%%%%%%%%%%

\subsection{Proof of Lemma \ref{lem:improvement}}
The $L$-smoothness gives
\begin{small}
\begin{align} \label{eqn:improve0}
&f(WW^\top) - f(\Wp{\Wp}^\top) \nonumber \\
&\ge \innerp{\nabla f(WW^\top)}{WW^\top - \Wp{\Wp}^\top}
- \frac{L}{2} \normf{WW^\top - \Wp{\Wp}^\top}^2 \nonumber \\
&= \innerp{\nabla f(WW^\top)}{(W-\Wp) W^\top + W (W-\Wp)^\top}
 - \innerp{\nabla f(WW^\top)}{(W-\Wp) (W-\Wp)^\top} \nonumber \\
&\quad - \frac{L}{2} \normf{WW^\top - \Wp{\Wp}^\top}^2
\end{align}
\end{small}
For the first term, we have
\begin{align} \label{eqn:improve1}
\innerp{\nabla f(WW^\top)}{(W-\Wp) W^\top + W (W-\Wp)^\top}
&= 2 \cdot \innerp{\nabla f(WW^\top) W}{W-\Wp} \nonumber \\
&= \eta \cdot \normf{\nabla_W f(WW^\top)}^2
\end{align}
Using the Cauchy-Schwarz inequality, the second term can be bounded as follows.
\begin{small}
\begin{align} \label{eqn:improve2}
\innerp{\nabla f(WW^\top)}{(\W-\Wp)(\W-\Wp)^\top}
&= \eta^2 \cdot \innerp{\nabla f(WW^\top)}{\nabla_W f(WW^\top) \cdot \nabla_W f(WW^\top)^\top} \nonumber \\
&= \eta^2 \cdot \innerp{\nabla f(WW^\top) \cdot \nabla_W f(WW^\top)}{\nabla_W f(WW^\top)} \nonumber \\
&\le \eta^2 \cdot \normf{\nabla f(WW^\top) \cdot \nabla_W f(WW^\top)} \cdot \normf{\nabla_W f(WW^\top)} \nonumber \\
&\le \eta^2 \cdot \normt{\nabla f(WW^\top)} \cdot \normf{\nabla_W f(WW^\top)}^2
\end{align}
\end{small}
To bound the third term of \eqref{eqn:improve0}, we have
\begin{align} \label{eqn:improve3}
\| WW^\top - \Wp{\Wp}^\top \|_F 
&\le \| WW^\top - W{\Wp}^\top \|_F + \| W{\Wp}^\top - \Wp{\Wp}^\top \|_F \nonumber \\
&\le (\normt{W} + \normt{\Wp}) \cdot \| W - \Wp \|_F \nonumber \\
&\le \eta \cdot \left( 2\normt{W} + \eta \cdot \normt{\nabla f(WW^\top)} \cdot \normt{W} \right) \cdot \| \nabla_W f(WW^\top) \|_F \nonumber \\
&\le \frac{7\eta}{3} \normt{W} \cdot \| \nabla_W f(WW^\top) \|_F
\end{align}
Plugging \eqref{eqn:improve1}, \eqref{eqn:improve2}, and \eqref{eqn:improve3} to \eqref{eqn:improve0}, we obtain
\begin{align*}
f(WW^\top) - f(\Wp{\Wp}^\top)
&\ge \eta \cdot \normf{\gradW}^2 \cdot \left( 1 - \eta \frac{17L \|W\|_2^2 + 3 \|\nabla f(WW^\top)\|_2}{3} \right) \\
&\ge \frac{3\eta}{5} \cdot \normf{\gradW}^2
\end{align*}
where the last inequality follows from the condition of the steplength $\eta$. This completes the proof.
 \hfill $\blacksquare$
 
\subsection{Proof of Lemma \ref{lem:suboptimality}}
We use the following lemma.
\begin{lemma}[Error bound] \label{lem:errorterm}
Assume that (A) holds for $\W$. Then
\begin{align*}
\innerp{\nabla f(WW^\top)}{\Delta_W \Delta_W^\top}
&\le \frac{1}{3} \cdot \normf{\gradW} \cdot \dist(U,V;\Xoptr)
\end{align*}
\end{lemma}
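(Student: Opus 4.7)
The plan is to apply Cauchy--Schwarz and then exploit the smallness of $\normf{\Delta_W}$ relative to $\sigma_r(\Wo)$ guaranteed by Assumption (A). Since the Frobenius inner product satisfies
$\innerp{\nabla f(WW^\top)}{\Delta_W \Delta_W^\top} = \innerp{\nabla f(WW^\top)\Delta_W}{\Delta_W}$,
Cauchy--Schwarz bounds the left-hand side by $\normf{\nabla f(WW^\top)\Delta_W}\cdot\normf{\Delta_W}$. Because the lifted chain rule gives $\nabla_W f = 2\,\nabla f(WW^\top) W$ and therefore $\normf{\gradW} = 2\normf{\nabla f(WW^\top)W}$, and $\normf{\Delta_W} = \dist(U,V;\Xoptr)$ by the choice of the minimizing $R$, the target reduces to showing
$$
\normf{\nabla f(WW^\top)\Delta_W} \le \tfrac{2}{3}\,\normf{\nabla f(WW^\top)W}.
$$

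To prove this, I would split $\Delta_W$ along range$(W)$. Under (A), Weyl's inequality applied to $W = \Wo R + \Delta_W$ yields $\sigma_r(W) \ge \tfrac{9}{10}\sigma_r(\Wo) > 0$, so $P_W = W(W^\top W)^{-1}W^\top$ is well-defined and $\Delta_W = P_W\Delta_W + P_W^\perp\Delta_W$. The parallel piece equals $WM$ with $M = (W^\top W)^{-1}W^\top\Delta_W$ satisfying $\|M\|_2 \le \normf{\Delta_W}/\sigma_r(W) \le \tfrac{1}{9}$ by (A). Consequently,
$$
\normf{\nabla f(WW^\top)\,P_W\Delta_W} = \normf{\nabla f(WW^\top)W\cdot M} \le \normf{\nabla f(WW^\top)W}\cdot\|M\|_2 \le \tfrac{1}{9}\normf{\nabla f(WW^\top)W},
$$
comfortably within the $\tfrac{2}{3}$ budget.

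The orthogonal piece is the hard part. Because $P_W W = W$, we have $P_W^\perp\Delta_W = -P_W^\perp\Wo R$, which quantifies the extent to which range$(\Wo)$ escapes range$(W)$. In addition to $\normf{P_W^\perp\Delta_W}\le\normf{\Delta_W}$, the block structure
$\nabla f(WW^\top) = \bigl[\begin{smallmatrix} 0 & \tfrac{1}{2}A \\ \tfrac{1}{2}A^\top & 0\end{smallmatrix}\bigr]$ with $A = \nabla f(UV^\top)$ decomposes $\normf{\nabla f(WW^\top)P_W^\perp\Delta_W}^2$ into contributions from $A$ acting on the $V$-block of $P_W^\perp\Delta_W$ and $A^\top$ acting on the $U$-block. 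I would bound the result using the operator-norm inequality $\normf{\nabla f(WW^\top)P_W^\perp\Delta_W} \le \normt{\nabla f(WW^\top)}\cdot\normf{\Delta_W}$ and show this is within the remaining $\tfrac{11}{18}\normf{\nabla f(WW^\top)W}$ allowance.

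The main obstacle is precisely this last step: controlling $\normt{\nabla f(WW^\top)}$ by $\normf{\nabla f(WW^\top)W}$ is not automatic, since $\nabla f(WW^\top)$ can have components outside range$(W)$. I expect to resolve this by applying $L$-smoothness of $\hat f$ between $WW^\top$ and $\Wo\Wo^\top$ together with the rank structure of the reference gradient, absorbing the resulting slack into the $\tfrac{11}{18}$ budget via the geometric closeness $\normf{\Delta_W}/\sigma_r(\Wo)\le \tfrac{1}{10}$ from (A). This is the delicate cross-term manipulation whose clean resolution distinguishes the present lemma from its PSD counterpart in \citep{bhojanapalli2016dropping}.
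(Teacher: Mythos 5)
Your overall frame---Cauchy--Schwarz followed by a decomposition of $\Delta_W$ according to column spaces, exploiting that $\Delta_W$ lives in $\mathrm{range}(W)+\mathrm{range}(\Wo)$---is the same as the paper's, and your treatment of the piece parallel to $\mathrm{range}(W)$ is fine. The gap is exactly where you flag it: the orthogonal piece. The bound $\normf{\nabla f(WW^\top)P_W^\perp\Delta_W}\le\normt{\nabla f(WW^\top)}\cdot\normf{\Delta_W}$ forces you to control the \emph{full} spectral norm of the gradient by $\normf{\nabla f(WW^\top)W}$, and this is false in general: the gradient can be supported entirely on directions orthogonal to $\mathrm{range}(W)$ (and to $\mathrm{range}(\Wo)$), in which case $\nabla f(WW^\top)W$ is tiny while $\normt{\nabla f(WW^\top)}$ is not. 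Your proposed rescue via $L$-smoothness of $\hat f$ between $WW^\top$ and $\Wo{\Wo}^\top$ does not close this, because $\nabla\hat f(\Wo{\Wo}^\top)$ need not vanish or be small ($\Wo$ is the target factorization, not an unconstrained critical point), so smoothness only relates two uncontrolled quantities. Note also that the lemma is a purely geometric statement about $W$, $\Wo$, $\Delta_W$ under (A); no smoothness should be needed.

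The missing idea is that the orthogonal piece satisfies $P_W^\perp\Delta_W=-P_W^\perp\Wo R$, so its columns lie in $\mathrm{range}(\Wo)$ (projected off $W$), and hence only $\normt{\gradf(WW^\top)Q_{\Wo}}$ is ever needed, never the full operator norm. This quantity is then controlled by a self-bounding (bootstrapping) argument: since $\Wo R=W-\Delta_W$, one has $\normt{\gradf(WW^\top)\Wo}\le\normt{\gradf(WW^\top)W}+\left(\normt{\gradf(WW^\top)Q_W}+\normt{\gradf(WW^\top)Q_{\Wo}}\right)\cdot\normt{\Delta_W}$, and Assumption (A) gives $\normt{\Delta_W}\le\sigma_r(\Wo)/10$ together with $\normt{\gradf(WW^\top)Q_{\Wo}}\le\sigma_r(\Wo)^{-1}\normt{\gradf(WW^\top)\Wo}$, so the $Q_{\Wo}$ term on the right contributes at most $\tfrac{1}{10}\normt{\gradf(WW^\top)\Wo}$ and can be absorbed into the left-hand side, yielding $\normt{\gradf(WW^\top)Q_{\Wo}}\le\tfrac{5}{4\sigma_r(\Wo)}\normt{\gradf(WW^\top)W}$. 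With that estimate in hand (and the analogous $\tfrac{10}{9\sigma_r(\Wo)}$ bound for $Q_W$, which you already have), the constant $\tfrac{1}{3}$ follows from $\normf{\Delta_W}\le\sigma_r(\Wo)/10$. Without this bootstrapping step your argument does not go through.
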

Now the lemma is proved as follows.
\begin{small}
\begin{align*}
f(WW^\top) - f(\Wo{\Wo}^\top)
&\stackrel{(a)}{\le} \langle \nabla f(WW^\top), WW^\top - {\Wo}{\Wo}^\top \rangle \\
&= \langle \nabla f(WW^\top), \Delta_W W^\top \rangle + \langle \nabla f(WW^\top), W \Delta_W^\top \rangle
- \langle \nabla f(WW^\top), \Delta_W \Delta_W^\top \rangle \\
&= 2 \langle \nabla f(WW^\top)W, \Delta_W \rangle - \langle \nabla f(WW^\top), \Delta_W \Delta_W^\top \rangle \\
&\stackrel{(b)}{\le} 2 \cdot \normf{\nabla_W f(WW^\top)} \cdot \normf{\Delta_W}
+ | \langle \nabla f(WW^\top), \Delta_W \Delta_W^\top \rangle | \\
&\stackrel{(c)}{\le} \frac{7}{3} \cdot \normf{\nabla_W f(WW^\top)} \cdot \normf{\Delta_W}
\end{align*}
\end{small}
(a) follows from the convexity of $f$, (b) follows from the Cauchy-Schwarz inequality, and (c) follows from Lemma \ref{lem:errorterm}.  \hfill $\blacksquare$

%%%%%%%%%%%%%%%%%%%%%%%%%%%%%%%%%%%%%%%%%%%%%%%%%%%%%%%%%%%%%%%%%%%%%%%%%%%%%%%%%%%%%%%%%%%%%%%%%%%%%%%%%%%%
\subsection{Proof of Lemma \ref{lem:errorterm}}
Define $Q_W$, $Q_{\Wo}$, and $Q_{\Delta_W}$ as the projection matrices of the column spaces of $W$, $\Wo$, and $\Delta_W = W - \Wo R$, respectively. We have
\begin{align}
\innerp{\gradf(WW^\top)}{\Delta_W \Delta_W^\top}
&= \innerp{\gradf(WW^\top) Q_{\Delta_W}}{\Delta_W \Delta_W^\top} \nonumber \\
&\stackrel{(a)}{\le} \normt{\gradf(WW^\top) Q_{\Delta_W}} \cdot \normf{\Delta_W}^2 \nonumber \\
&\stackrel{(b)}{\le} \left( \normt{\gradf(WW^\top) Q_{W}} + \normt{\gradf(WW^\top) Q_{\Wo}} \right) \cdot \normf{\Delta_W}^2 \nonumber \\ \label{eqn:step2_errorbound}
\end{align}
where (a) follows from the Cauchy-Schwarz inequality and the fact $\normf{AB} \le \normt{A} \cdot \normf{B}$, and (b) follows from that $W - \Wo$ lies on the column space spanned by $W$ and $\Wo$. To bound the terms in \eqref{eqn:step2_errorbound}, we obtain 
\begin{small}
\begin{align*}
\normt{\gradf(WW^\top) Q_W}
&= \normt{\gradf(WW^\top) W W^\dagger}
\le \frac{1}{\sigma_r(W)} \normt{\gradf(WW^\top) W} \\
&\le \frac{10}{9\sigma_r(\Wo)} \normt{\gradf(WW^\top) W}, \\
\normt{\gradf(WW^\top) Q_{\Wo}}
&= \normt{\gradf(WW^\top) \Wo {\Wo}^{\dagger}}
\le \frac{1}{\sigma_r(\Wo)} \normt{\gradf(WW^\top) \Wo}, \\
\| \gradf(WW^\top) \Wo \|_2 &\le \| \gradf(WW^\top) W \|_2 + \| \gradf(WW^\top) \Delta_W \|_2 \\
&\le \| \gradf(WW^\top) W \|_2 + \left( \| \gradf(WW^\top) Q_W \|_2 + \| \gradf(WW^\top) Q_{\Wo} \|_2 \right) \cdot \normt{\Delta_W} \\
&\le \frac{10}{9} \| \gradf(WW^\top) W \|_2 + \frac{1}{10} \cdot \| \gradf(WW^\top) \Wo \|_2, \\
\| \gradf(WW^\top) Q_{\Wo} \|_2
&\le \frac{1}{\sigma_r(\Wo)} \| \gradf(WW^\top) \Wo \|_2
 \le \frac{5}{4\sigma_r(\Wo)} \| \gradf(WW^\top) W \|_2,
\end{align*}
\end{small}
where $W^{\dagger}$ and ${\Wo}^{\dagger}$ are the Moore-Penrose pseudoinverses of $W$ and $\Wo$. Plugging the above into \eqref{eqn:step2_errorbound}, we get
\begin{align*}
\innerp{\gradf(WW^\top)}{\Delta_W \Delta_W^\top}
&\le \frac{95}{36\sigma_r(\Wo)} \cdot \normt{\gradf(WW^\top) W} \cdot \normf{\Delta_W}^2 \\
&\stackrel{(a)}{\le} \frac{1}{3} \cdot \normt{\gradf(WW^\top) W} \cdot \normf{\Delta_W}
\end{align*}
where (a) follows from (A).  \hfill $\blacksquare$

\subsection{Proof of Lemma \ref{lem:approaching}}
For this proof, we borrow a lemma from \citep{bhojanapalli2016dropping}. Although the assumption for the lemma is stronger than Assumption (A), but a slight modification of the proof leads to the following lemma from Assumption (A).
\begin{lemma}[Lemma C.2 of \cite{bhojanapalli2016dropping}] \label{lem:smooth_00}
Let Assumption (A) hold and $f(W^+ {W^+}^\top) \ge f(\Wo {\Wo}^\top)$. Then the following lower bound holds:
\begin{align*}
\innerp{\nabla f(WW^\top)}{\Delta_W \Delta_W^\top} \ge - \frac{\sqrt{2}}{\sqrt{2}-\frac{1}{10}} \cdot \frac{1}{10} \cdot \left| \innerp{\nabla f(WW^\top)}{WW^\top - \Wo {\Wo}^\top} \right|.
\end{align*}
\end{lemma}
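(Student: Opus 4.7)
I will prove this directly as a refinement of the projection-based estimate already developed in Lemma~\ref{lem:errorterm}, now leveraging the hypothesis $f(\Wp {\Wp}^\top) \ge f(\Wo {\Wo}^\top)$ to convert the bound from a ``$\|\nabla_W f(WW^\top)\|_F$''-type right-hand side into an ``$\innerp{\nabla f(WW^\top)}{WW^\top - \Wo{\Wo}^\top}$''-type right-hand side. The argument splits into three phases.

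\textbf{Phase 1: reduce to the nontrivial case and activate convexity.} If $\innerp{\nabla f(WW^\top)}{\Delta_W \Delta_W^\top} \ge 0$, the claim holds trivially (any non-negative number majorizes a non-positive one). Assume therefore that $\innerp{\nabla f(WW^\top)}{\Delta_W \Delta_W^\top} < 0$. Lemma~\ref{lem:improvement} gives $f(WW^\top) \ge f(\Wp {\Wp}^\top)$, which combined with the hypothesis yields $f(WW^\top) \ge f(\Wo{\Wo}^\top)$. Convexity of $f$ then gives
\[
\innerp{\nabla f(WW^\top)}{WW^\top - \Wo{\Wo}^\top} \;\ge\; f(WW^\top) - f(\Wo{\Wo}^\top) \;\ge\; 0,
\]
so the absolute value on the right-hand side of the lemma equals the inner product itself. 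Expanding
$WW^\top - \Wo{\Wo}^\top = \Delta_W W^\top + W\Delta_W^\top - \Delta_W \Delta_W^\top$
with $\Delta_W = W - \Wo R$ gives the identity
\[
\innerp{\nabla f(WW^\top)}{WW^\top - \Wo{\Wo}^\top} \;=\; 2\innerp{\nabla f(WW^\top) W}{\Delta_W} - \innerp{\nabla f(WW^\top)}{\Delta_W \Delta_W^\top},
\]
which converts the desired bound into the single scalar inequality
$|\innerp{\nabla f(WW^\top)}{\Delta_W \Delta_W^\top}| \le \tfrac{c}{1-c}\cdot 2\innerp{\nabla f(WW^\top) W}{\Delta_W}$
with $c = \tfrac{\sqrt{2}/10}{\sqrt{2}-1/10}$.

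\textbf{Phase 2: projection-based bound on the error term.} Repeating the argument used to prove Lemma~\ref{lem:errorterm}, I project $\nabla f(WW^\top)$ onto the column space of $\Delta_W$, which lies in $\mathrm{span}(W) + \mathrm{span}(\Wo)$, and apply Cauchy--Schwarz to get
\[
|\innerp{\nabla f(WW^\top)}{\Delta_W \Delta_W^\top}| \le \bigl(\normt{\nabla f(WW^\top) Q_W} + \normt{\nabla f(WW^\top) Q_{\Wo}}\bigr) \cdot \normf{\Delta_W}^2.
\]
Then, as in Lemma~\ref{lem:errorterm}, I bound each projection in terms of $\normt{\nabla f(WW^\top) W}$ using the pseudoinverse identity together with $\sigma_r(W) \ge \sigma_r(\Wo) - \|\Delta_W\|_F$ and the triangle inequality $\normt{\nabla f(WW^\top) \Wo} \le \normt{\nabla f(WW^\top) W} + \normt{\nabla f(WW^\top) Q_{\Delta_W}}\cdot \normt{\Delta_W}$; this yields
\[
|\innerp{\nabla f(WW^\top)}{\Delta_W \Delta_W^\top}| \;\le\; C(\eta)\cdot \normt{\nabla f(WW^\top) W} \cdot \|\Delta_W\|_F^2 / \sigma_r(\Wo),
\]
where $C(\eta)$ is an explicit constant collecting the factors $\tfrac{10}{9}$, $\tfrac{5}{4}$, etc.\ that appear through Assumption~(A).

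\textbf{Phase 3: convert $\|\Delta_W\|_F^2/\sigma_r(\Wo)$ into $2\innerp{\nabla f(WW^\top) W}{\Delta_W}$.} By Cauchy--Schwarz $2\innerp{\nabla f(WW^\top) W}{\Delta_W} \le 2\normt{\nabla f(WW^\top) W}\cdot \|\Delta_W\|_F$, so dividing the bound from Phase~2 by this quantity gives a ratio of at most $\tfrac{C(\eta)\|\Delta_W\|_F}{2\sigma_r(\Wo)}$, which under Assumption~(A) is at most $\tfrac{C(\eta)}{20}$. Choosing the projection inequalities tightly so that $C(\eta)$ simplifies to $\tfrac{2\sqrt{2}}{\sqrt{2}-1/10}$ (this is the content of the careful bookkeeping: $\sqrt{2}$ arises because $\sigma_r(\Wo) = \sqrt{2}\,\sigma_r(\Xoptr)^{1/2}$ while $\|\Delta_W\|_F \le \sigma_r(\Wo)/10$; the denominator $\sqrt{2}-1/10$ comes from $\sigma_r(W) \ge \sigma_r(\Wo) - \|\Delta_W\|_F \ge (\sqrt{2}-1/10)\sigma_r(\Xoptr)^{1/2}$) produces exactly the bound $|\innerp{\nabla f(WW^\top)}{\Delta_W\Delta_W^\top}| \le \tfrac{c}{1-c} \cdot 2\innerp{\nabla f(WW^\top) W}{\Delta_W}$. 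Rearranging using the Phase~1 identity yields the statement.

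\textbf{Main obstacle.} The structural outline is straightforward, but the delicate point is producing the exact constant $\tfrac{\sqrt{2}}{\sqrt{2}-1/10}\cdot\tfrac{1}{10}$ rather than a messier expression. This requires bounding $\sigma_r(W)$ not via the crude $\tfrac{9}{10}\sigma_r(\Wo)$ estimate from Assumption~(A) but instead via $\sigma_r(W) \ge (\sqrt{2}-1/10)\sigma_r(\Xoptr)^{1/2}$ (which uses the equal-footing identity $\sigma_r(\Wo) = \sqrt{2}\,\sigma_r(\Xoptr)^{1/2}$), and carefully tracking that the cross-term in the expansion of $\|\nabla f(WW^\top)Q_{\Delta_W}\|_2$ is absorbed without loss. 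All other steps are Cauchy--Schwarz and triangle-inequality manipulations of the type already carried out in Lemma~\ref{lem:errorterm}.
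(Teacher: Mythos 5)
First, a point of comparison: the paper does not actually prove this lemma --- it is imported from Lemma C.2 of \citep{bhojanapalli2016dropping} with only the remark that ``a slight modification of the proof'' yields it under the weaker Assumption (A). So your proposal is effectively being measured against that external proof, and unfortunately it does not reconstruct it: Phase 3 contains a genuine logical error.

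Your Phase 1 reduction is correct: writing $E := \innerp{\gradf(WW^\top)}{\Delta_W\Delta_W^\top}$ and $S := 2\innerp{\gradf(WW^\top)W}{\Delta_W}$, the claim reduces (in the nontrivial case $E<0$) to $|E| \le \tfrac{c}{1-c}\,S$. But in Phase 3 you bound $|E|$ from above and then ``divide by'' the Cauchy--Schwarz \emph{upper} bound $S \le 2\normt{\gradf(WW^\top)W}\cdot\normf{\Delta_W}$. Replacing a denominator by an upper bound makes the ratio smaller, not larger: this yields a lower bound on $|E|/S$, not the required upper bound. What you actually need is a \emph{lower} bound on $S$ of the form $S \gtrsim \normt{\gradf(WW^\top)W}\cdot\normf{\Delta_W}$, i.e.\ a reverse Cauchy--Schwarz, and none is available: $S$ is a signed inner product that can vanish or be negative even while $E$ is strictly negative (e.g.\ if $\gradf(WW^\top)W$ is orthogonal to $\Delta_W$ then $S=0$, yet $E$ reduces to $\innerp{\gradf(WW^\top)}{\Wo R R^\top {\Wo}^\top}$, which need not be zero). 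Hence the inequality $|E|\le\tfrac{c}{1-c}S$ that Phases 2--3 purport to prove is not a consequence of Assumption (A) plus norm estimates alone. A telltale symptom is that your quantitative argument never uses the hypothesis $f(\Wp{\Wp}^\top)\ge f(\Wo{\Wo}^\top)$ except to fix a sign in Phase 1; that hypothesis is essential and must enter the estimates themselves, since it is what ties the gradient's component along $\Delta_W$ to the quantity $\innerp{\gradf(WW^\top)}{WW^\top-\Wo{\Wo}^\top}$. The difficulty you flag --- getting the exact constant $\tfrac{\sqrt{2}}{\sqrt{2}-1/10}\cdot\tfrac{1}{10}$ --- is therefore not the real obstacle; the missing lower bound on $S$ is, and no tightening of the projection estimates from Lemma~\ref{lem:errorterm} can supply it.
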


\newcommand{\errorUtilde}{U - \tilde{U}}
\newcommand{\errorVtilde}{V - \tilde{V}}
We have
\begin{align} \label{eqn:approaching1}
&\dist(U,V;\Xoptr)^2 - \dist(\Up,\Vp;\Xoptr)^2 \nonumber \\
&\ge \normf{W-\Wo R}^2 - \normf{\Wp - \Wo R}^2 \nonumber \\
&= 2 \eta \innerp{\nabla_W f(WW^\top)}{\Delta_W} - \eta^2 \normf{\nabla_W f(WW^\top)}^2 \nonumber \\
&= 4 \eta \innerp{\nabla f(WW^\top) W}{\Delta_W} - \eta^2 \normf{\nabla_W f(WW^\top)}^2 \nonumber \\
&= 2 \eta \innerp{\nabla f(WW^\top)}{WW^\top - \Wo {\Wo}^\top} + 2 \eta \innerp{\nabla f(WW^\top)}{\Delta_W \Delta_W^\top} - \eta^2 \normf{\nabla_W f(WW^\top)}^2 \nonumber \\
&\stackrel{(a)}{\ge} \frac{17\eta}{10} \innerp{\nabla f(WW^\top)}{WW^\top - \Wo {\Wo}^\top} - \eta^2 \normf{\nabla_W f(WW^\top)}^2 \nonumber \\
&\stackrel{(b)}{\ge} \frac{51\eta^2}{50} \normf{\nabla_W f(WW^\top)}^2 - \eta^2 \normf{\nabla_W f(WW^\top)}^2 \nonumber \\
&\ge 0
\end{align}
where (a) follows from Lemma \ref{lem:smooth_00}, (b) follows from the convexity of $f$, the hypothesis of the lemma, and Lemma \ref{lem:improvement} as follows.
\begin{align*}
\innerp{\nabla f(WW^\top)}{WW^\top - \Wo {\Wo}^\top}
&\ge f(WW^\top) - f(\Wo {\Wo}^\top) \\
&\ge f(WW^\top) - f(\W^+ {\W^+}^\top) \\
&\ge \frac{3\eta}{5} \cdot \normf{\nabla_W f(WW^\top)}^2
\end{align*}
This completes the proof.   \hfill $\blacksquare$

\section{Appendix}
The triangle inequality gives that
\begin{align} \label{eqn:init_bound0}
\normf{\factorinit - \Xoptr}
&\le \normf{\factorinit - \Xinit} + \normf{\Xinit - \Xopt} + \normf{\Xopt - \Xoptr}
\end{align}
Let us first obtain an upper bound on the first term. We have
\begin{align*}
\normf{\Xinit - \factorinit}
&= \norm{\begin{bmatrix} \sigma_{r+1}(\Xinit) \\ \vdots \\ \sigma_{\min\{m,n\}}(\Xinit) \end{bmatrix}} \\
&\stackrel{(a)}{\le} \norm{\begin{bmatrix} \sigma_{r+1}(\Xopt) \\ \vdots \\ \sigma_{\min\{m,n\}}(\Xopt) \end{bmatrix}}
  + \norm{\begin{bmatrix} \sigma_{r+1}(\Xinit) - \sigma_{r+1}(\Xopt) \\ \vdots \\ \sigma_{\min\{m,n\}}(\Xinit) - \sigma_{\min\{m,n\}}(\Xopt) \end{bmatrix}} \\
&= \normf{\Xopt - \Xoptr} + \sqrt{\sum_{i=r+1}^{\min\{m,n\}} (\sigma_i(\Xinit) - \sigma_i(\Xopt))^2} \\
&\stackrel{(b)}{\le} \normf{\Xopt - \Xoptr} + \normf{\Xinit - \Xopt}
\end{align*}
where (a) follows from the triangle inequality, and (b) is due to Mirsky's theorem \citep{mirsky}. Plugging this bound into \eqref{eqn:init_bound0}, we get
\begin{align} \label{eqn:init_bound1}
\normf{\factorinit - \Xoptr}
&\le 2\normf{\Xinit - \Xopt} + 2\normf{\Xopt - \Xoptr}
\end{align}
Now we bound the first term of \eqref{eqn:init_bound1}. We have
\begin{align*}
\normf{\Xinit}
&= \frac{1}{L} \normf{\gradf(0)}
 = \frac{1}{L} \normf{\gradf(0) - \gradf(\Xopt)}
\stackrel{(a)}{\le} \normf{0 - \Xopt}
 = \normf{\Xopt}, \\
L \innerp{\Xinit}{\Xopt}
&= - \innerp{\gradf(0)}{\Xopt}
\stackrel{(b)}{\ge} f(0) - f(\Xopt) + \frac{\mu}{2} \normf{\Xopt}^2
\stackrel{(c)}{\ge} \mu \normf{\Xopt}^2
\end{align*}
where (a) follows from the $L$-smoothness, (b) and (c) follow from the $\mu$-strong convexity. Then it follows that
\begin{align*}
\normf{\Xinit - \Xopt}^2
&= \normf{\Xinit}^2 + \normf{\Xopt}^2 - 2\innerp{\Xinit}{\Xopt}
\le 2(1 - \frac{\mu}{L})\normf{\Xopt}^2
\end{align*}
Applying this inequality to \eqref{eqn:init_bound1}, we get the desired inequality.  \hfill $\blacksquare$

\bibliography{NonSquareFGD}
\bibliographystyle{plainnat}

\end{document}